%
%
%
%
\documentclass[a4paper]{amsart}

\addtolength{\textwidth}{2cm} \addtolength{\hoffset}{-1cm}
\addtolength{\marginparwidth}{-1cm} \addtolength{\textheight}{2cm}
\addtolength{\voffset}{-1cm}

\usepackage{cite}
\usepackage{amsmath}
\usepackage{amssymb}
\usepackage{amsthm}   
\usepackage{empheq}   
\usepackage{graphicx}
\usepackage{color}
\usepackage{appendix}
\usepackage{hyperref}

\newtheorem{theorem}{Theorem}[section]
\newtheorem{lemma}[theorem]{Lemma}
\newtheorem{proposition}[theorem]{Proposition}
\newtheorem{corollary}[theorem]{Corollary}

\theoremstyle{definition}
\newtheorem{definition}[theorem]{Definition}
\newtheorem{example}[theorem]{Example}

\theoremstyle{remark}
\newtheorem{remark}[theorem]{Remark}

\numberwithin{equation}{section}

\def\EE{{\mathcal{E}}}
\def\FF{{\mathcal{F}}}
\def\tt{{\mathtt{t}}}
\def\ss{{\mathtt{s}}}
\def\SS{{\mathbf{S}}}
\def\TT{{\mathbf{T}}}



\begin{document}

\title[Regular Dirichlet extension]{Regular Dirichlet extensions of one-dimensional Brownian motion}

\author{ Liping Li}
\address{Institute of Applied Mathematics, Academy of Mathematics and Systems Science, Chinese Academy of Sciences, Beijing 100190, China.}
\email{liping\_li@amss.ac.cn}
\thanks{The first named author is partially supported by a joint grant (No. 2015LH0043) of China Postdoctoral Science Foundation and Chinese Academy of Science. The second named author is partially supported by NSFC No. 11271240.}

\author{Jiangang Ying}
\address{School of Mathematical Sciences, Fudan University, Shanghai 200433, China.}
\email{jgying@fudan.edu.cn}

\subjclass[2010]{Primary 31C25, 60J55; Secondary 60J60}

\date{\today}


\keywords{Regular Dirichlet extensions, regular Dirichlet subspaces, Dirichlet forms, diffusion processes, trace Dirichlet forms.}

\begin{abstract}
The regular Dirichlet extension is the dual concept of regular Dirichlet subspace. The main purpose of this paper is to characterize all the regular Dirichlet extensions of one-dimensional Brownian motion and to explore their structures. It is shown that every regular Dirichlet extension of one-dimensional Brownian motion may essentially decomposed into at most countable disjoint invariant intervals and an $\EE$-polar set relative to this regular Dirichlet extension. On each invariant interval the regular Dirichlet extension is characterized uniquely by a scale function in a given class. To explore the structure of regular Dirichlet extension we apply the idea introduced in \cite{LY14}, we formulate the trace Dirichlet forms and attain the darning process associated with the restriction to each invariant interval of the orthogonal complement of $H^1_\mathrm{e}(\mathbb{R})$ in the extended Dirichlet space of the regular Dirichlet extension. As a result, we find an answer to a long-standing problem whether a pure jump Dirichlet form has proper regular Dirichlet subspaces.
\end{abstract}

\maketitle

\tableofcontents

\section{Introduction}\label{SEC1}

The notion of regular Dirichlet subspace (or simply regular subspace) was first raised by the second author and his co-authors in \cite{FFY05}. Roughly speaking, it is a subspace of a Dirichlet space but also a regular Dirichlet form on the same state space. Precisely, let $E$ be a locally compact separable metric space and $m$ a fully supported measure on $E$. If two regular Dirichlet forms $(\EE^1,\FF^1)$ and $(\EE^2,\FF^2)$ on $L^2(E,m)$ satisfy
\[
	\FF^1\subset \FF^2, \quad \EE^2(u,v)=\EE^1(u,v),\quad \forall u,v\in \FF^1,
\]
then $(\EE^1,\FF^1)$ is called a regular Dirichlet subspace of $(\EE^2,\FF^2)$. It is called a proper one provided $\FF^1\neq \FF^2$. A complete characterization for regular Dirichlet subspaces of one-dimensional Brownian motion was given in \cite{FFY05}. To make it clear, consider $(\EE^2,\FF^2)=\left(\frac{1}{2}\mathbf{D}, H^1(\mathbb{R})\right)$, where $H^1(\mathbb{R})$ is the $1$-Sobolev space and $\mathbf{D}$ is the Dirichlet integral, i.e., for any $u,v\in H^1(\mathbb{R})$,
\[
	\mathbf{D}(u,v)=\int_{\mathbb{R}}u'(x)v'(x)dx.
\]
 It is well-known that the associated Markov process of $\left(\frac{1}{2}\mathbf{D}, H^1(\mathbb{R})\right)$ is indeed the one-dimensional Brownian motion, which is denoted by $B=(B_t)_{t\geq 0}$ hereafter. Then any regular Dirichlet subspace  $(\EE^1, \FF^1)$ of $(\frac{1}{2}\mathbf{D}, H^1(\mathbb{R}))$ corresponds to an irreducible diffusion process on $\mathbb{R}$ with no killing inside, the speed measure $m$ (Lebesgue measure) and the scale function $\ss$ in the following class:
\begin{equation}\label{EQ1SRS}
\mathbf{S}(\mathbb{R})=\{\ss: \mathbb{R}\rightarrow \mathbb{R}, \text{ strictly increasing and absolutely continuous}, \ss'=0\text{ or }1\}.
\end{equation}
Furthermore, $(\EE^1,\FF^1)$ may be written as
\[
\begin{aligned}
	&\FF^1=\left\{u\in L^2(\mathbb{R}): u\ll \ss, du/d\ss\in L^2(\mathbb{R},d\ss)\right\}, \\
	&\EE^1(u,v)=\frac{1}{2}\int_\mathbb{R}\frac{du}{d\ss}\frac{dv}{d\ss}d\ss,\quad u,v\in \FF^1,
\end{aligned}
\]
where the notation $u\ll \ss$ means that $u$ is absolutely continuous with respect to $\ss$.

In this paper, we shall consider the dual notion of regular Dirichlet subspace. Its formal definition is as follows.

\begin{definition}
Let $E$ be a locally compact separable metric space and $m$ a fully supported Radon measure on $E$. Given two regular Dirichlet forms $(\EE^1,\FF^1)$ and $(\EE^2,\FF^2)$ on $L^2(E,m)$, $(\EE^2,\FF^2)$ is said to be a \emph{regular Dirichlet extension} (or simply regular extension) of $(\EE^1,\FF^1)$ if
\begin{equation}\label{EQ1FFE}
	\FF^1\subset \FF^2, \quad \EE^2(u,v)=\EE^1(u,v),\quad \forall u,v\in \FF^1.
\end{equation}
\end{definition}

In other words, $(\EE^2,\FF^2)$ is a regular Dirichlet extension of $(\EE^1,\FF^1)$ if and only if $(\EE^1,\FF^1)$ is a regular Dirichlet subspace of $(\EE^2,\FF^2)$.
Naturally, given a fixed regular Dirichlet form, the basic problems for this new notion are
\begin{itemize}
\item[\textbf{(Q.1)}] whether the proper regular Dirichlet extensions exist;
\item[\textbf{(Q.2)}] if exist, how to characterize all of them;
\item[\textbf{(Q.3)}] how to describe their structures.
\end{itemize}

We shall focus on regular extensions of one-dimensional Brownian motion in this paper, 
more precisely regular Dirichlet extensions of $(\frac{1}{2}\mathbf{D}, H^1(\mathbb{R}))$. 
However this seems trivial because we thought at first that its regular Dirichlet extension should be irreducible. It is well known that an irreducible one-dimensional diffusion process can be characterized by a scale function, a speed measure and a killing measure (Cf. \cite{IM74}). An irreducible one-dimensional diffusion must be symmetric with respect to the speed measure and its Dirichlet form has representation given in \cite[Theorem~3.1]{FHY10}. Then applying \cite[Theorem~4.1]{FHY10}, we conclude that if an (irreducible) regular Dirichlet form is a Dirichlet extension of one-dimensional Brownian motion if and only if the scale function of its associated diffusion belongs to the following class:
\begin{equation}\label{EQ1TRT}
\begin{aligned}
\mathbf{T}(\mathbb{R})
& :=\left\{\tt:\mathbb{R}\rightarrow \mathbb{R}\,|\, \text{strictly increasing and continuous, }dx\ll d\tt, \frac{dx}{d\tt}=1\text{ or }0, d\tt\text{-a.e.}\right\} \\
&=\left\{\tt: \mathbb{R}\rightarrow \mathbb{R}\,|\, \ss:=\tt^{-1}\in \mathbf{S}(\mathbb{R})\right\} \quad \text{(Cf. \cite[Theorem~4.1]{FFY05})}.
\end{aligned}
\end{equation}
Note that $\TT(\mathbb{R})\neq \{\tt=\ss^{-1}| \ss\in \SS(\mathbb{R})\}$ since the range $\ss(\mathbb{R})$ of $\ss$ may be a proper subset of $\mathbb{R}$ for some $\ss\in \SS(\mathbb{R})$ (such as the example at the end of \cite{FFY05}). At least we have proper examples, such as Example~\ref{EXA215}, for the problem \textbf{(Q.1)}.

Note that an irreducible diffusion process above is called `regular' in the terminology of \cite[(45.2)]{RW87}:
\[
	\mathbf{P}_x(\sigma_y<\infty)>0,\quad \forall x,y \in\mathbb{R},
\]
where $\mathbf{P}_x,\ {x\in \mathbb{R}}$ is the probability measure to describe the diffusion process $(X_t)_{t\geq 0}$
starting at $x$ and $\sigma_y$ the first hitting time of $\{y\}$, i.e. $\sigma_y:=\inf\{t>0: X_t=y\}$.
 When dealing with a regular Dirichlet subspace, since one-dimensional Brownian motion is irreducible,
 it follows from Proposition~\ref{PRO23}~(3) that any regular Dirichlet subspace is also irreducible,
so that the characterization of regular Dirichlet subspaces of one-dimensional Brownian motion has been completed in \cite{FFY05}.

Actually we realized that the characterization problem of regular Dirichlet extensions of one-dimensional Brownian motion 
was far from being solved when we found the following example of regular Dirichlet extension for Brownian motion which is not irreducible.
This example was an surprise for us indeed and initiated this article. 

\begin{example} Let a linear diffusion process $X$ on $\mathbb{R}$, having Lebesgue measure as speed measure, consist of two irreducible parts: a reflected Brownian motion on $I_1:=(-\infty, 0]$ and a linear diffusion on $I_2:=(0,\infty)$ with scale function $\tt$ where the range of $\tt$ is $\mathbb{R}$ and $\tt$ satisfies  that $dx\ll d\tt$ and ${dx/d\tt}=0$ or $1$. The existence of $\tt$ will be explained later. Referring to \cite{FHY10}, the Dirichlet form of $X$ on $L^2(\mathbb{R})$ is given by
\[
\begin{aligned}
 &\FF=\{f\in L^2(\mathbb{R}): f|_{I_1}\in H^1(I_1), f|_{I_2}\in H^1(I_2,d\tt)\};\\
&\EE(f,f)=\frac{1}{2}\int_{I_1} \left({\frac{df}{dx}}\right)^2dx+\frac{1}{2}\int_{I_2}\left(\frac{df}{d\tt}\right)^2 d\tt,
\end{aligned}\]
where $H^1(I_2,d\tt)=\{f\in L^2(I_2): f\ll \tt, {df/d\tt}\in L^2(I_2;d\tt)\}$.
It is easy to check that $(\EE,\FF)$ is an Dirichlet extension of $(\frac{1}{2}\mathbf{D}, H^1(\mathbb{R}))$. We need only to verify that it is regular, or precisely $\FF\cap C_c(\mathbb{R})$ is dense in $\FF$. It amounts to prove that for a function $f$ on $\mathbb{R}$ with $f|_{I_1}\in C_c^\infty(I_1)$ and $f|_{I_2}\in H^1(I_2,d\tt)\cap C_c(I_2)$, and any $\epsilon>0$, there exists $f_{\epsilon}\in \FF\cap C_c(\mathbb{R})$ such that
$$\EE_1(f-f_{\epsilon},f-f_{\epsilon})<2\epsilon.$$

We would like to spend a few lines to explain the proof because the idea inspires this paper. For simplicity we assume that $f(0)=1$. We may let $\epsilon$ small enough such that $f(\epsilon)=0$. Since $\tt(0+)=-\infty$, we may have $\epsilon'\in (0,\epsilon)$ so that $\tt(\epsilon)-\tt(\epsilon')>2/\epsilon$. Define $\varphi\in C(\mathbb{R})$
$$\varphi(x):=\begin{cases}1,& x\le t(\epsilon');\\
\frac{\tt(\epsilon)-x}{\tt(\epsilon)- \tt(\epsilon')},& x\in {(\tt(\epsilon'),\tt(\epsilon))},\\
0,& x\ge \tt(\epsilon),\end{cases}$$
and $$f_{\epsilon} :=f\cdot 1_{\mathbb{R}\backslash (0,\epsilon]}+\varphi\circ \tt\cdot1_{(0,\epsilon]}.$$
Then $f_{\epsilon}\in C_c(\mathbb{R})$, $f_\epsilon-f=\varphi\circ \tt$ and
\begin{align*} \EE_1(\varphi\circ\tt,\varphi\circ\tt)&=\int_0^\epsilon (\varphi\circ\tt(x))^2dx+
\frac{1}{2} \int_0^\epsilon \left(\frac{d\varphi\circ\tt}{d\tt}\right)^2d\tt\\
&\le \epsilon+\frac{1}{2}\int_{\tt(\epsilon')}^{\tt(\epsilon)} (\varphi'(x))^2dx\le 2\epsilon.\end{align*}
\end{example}

From this example, we know that the extensions come from two aspects: one is the singularity of scale function and the other is
the violence of irreducibility. The main purpose of this article is to give a complete characterization of extensions for one-dimensional Brownian motion.
After having characterization theorem, we are naturally interested in the structure of regular extensions.
In \cite{LY14}, we investigated the structure of regular Dirichlet subspace $(\EE^1,\FF^1)$ by using trace. It is evident that any scale function $\ss$ in \eqref{EQ1SRS} could induce a measure-dense set (i.e., for any $a<b$, $m((a,b)\cap G_\ss)>0$)
\[
G_\ss:=\{x: \ss'(x)=1\}
\]
and vice versa. By enforcing a basic assumption: `$G_\ss$ has an open version', we first claimed that before leaving $G_\ss$, $(\EE^1,\FF^1)$ is nothing but a Brownian motion (Cf. \cite[Lemma~2.2]{LY14}). Then their differences are focused on the traces on $G_\ss^c$ and the trace formulae are attained in \cite[Theorem~2.1]{LY14} by using the results of  \cite{CFY06}. We shall apply the same idea in this paper to analyze the structure of extension.

This paper is organized as follows. In \S\ref{SEC0}, we summarize some basic properties concerning regular Dirichlet extensions in the general setting. Particularly, a regular Dirichlet extension of one-dimensional Brownian motion must be strongly local and recurrent. Thus the associated Hunt process is a conservative diffusion process on $\mathbb{R}$. In \S\ref{SEC2}, we treat the problem \textbf{(Q.2)} for one-dimensional Brownian motion. The main theorem, i.e. Theorem~\ref{THM23}, characterizes all the regular Dirichlet extensions of one-dimensional Brownian motion. It turns out that every regular Dirichlet extension of one-dimensional Brownian motion has countable invariant intervals and on each of such intervals, the regular Dirichlet extension is determined uniquely by a scale function in the class \eqref{EQ2TIT}. Moreover, the complement of these intervals is an $\EE$-polar set relative to this regular Dirichlet extension. Several examples of proper regular Dirichlet extensions are presented in \S\ref{SEC23}.  In \S\ref{SEC3} and \S\ref{SEC5}, we consider the problem \textbf{(Q.3)} for one-dimensional Brownian motion and describe the structures of regular Dirichlet extensions via the trace method introduced in \cite{LY14}. We attain the expression of the orthogonal complement $\mathcal{G}$ of $H^1_\mathrm{e}(\mathbf{R})$ in $\FF_\mathrm{e}$ in Theorem~\ref{THM32} and the regular representation of the restriction of $\mathcal{G}$ on each invariant interval via the darning method in Theorem~\ref{THM313}. The darning process turns out to be a Brownian motion being time changed by a Radon smooth measure. The trace formulae of regular Dirichlet extension and the one-dimensional Brownian motion are formulated in Theorem~\ref{THM38}. In Corollary~\ref{COR39}, a special case of Theorem~\ref{THM38} is emphasized, in which the trace Dirichlet forms of one-dimensional Brownian motion and its regular Dirichlet extension are both pure-jump type and have the same jumping measure but different Dirichlet spaces. The essential differences between them are illustrated in Corollary~\ref{COR311}. Roughly speaking, the trace of Brownian motion is irreducible, whereas the trace of regular Dirichlet extension is not irreducible.

\subsubsection*{Notations and terminologies}
Let us put some often used notations here for handy reference, though we may restate their definitions when they appear.

For $a<b$, $\langle a, b\rangle$ is an interval where $a$ or $b$ may or may not be contained $\langle a, b\rangle$.
Notations $m$, $dx$ and $|\cdot|$ stand for the Lebesgue measure on $\mathbb{R}$ throughout the paper if no confusion caused.%
The restrictions of a measure $\mu$ and a function $f$ on $I$ are denoted by $\mu|_I$ and $f|_I$ respectively.
The notation `$:=$' is read as `to be defined as'.

For a scale function $\tt$ (i.e. a continuous and strictly increasing function) on some interval $I$, $d\tt$ represents its associated Lebesgue-Stieltjes measure on $I$. Set $\tt(I):=\{\tt(x):x\in I\}$. For two measures $\mu$ and $\nu$, $\mu\ll \nu$ means $\mu$ is absolutely continuous with respect to $\nu$. Given a scale function $\tt$ on $I$ and another function $f$ on $I$, $f\ll \tt$ means $f=g\circ \tt$ for some absolutely continuous function $g$ and $\frac{df}{d\tt}:=g'\circ \tt$. Given an interval $I$,  the classes $C_c(I), C^1_c(I)$ and $C^\infty_c(I)$ denote the spaces of all continuous functions on $I$ with compact support, all continuously differentiable functions with compact support and all infinitely differentiable functions with compact support, respectively.

For a Markov process $X=(X_t)_{t\geq 0}$ associated with a Dirichlet form $(\EE,\FF)$ on $L^2(E,m)$, $(P_t)_{t\geq 0}$ represents its probability transition semigroup, i.e. $P_tf(x):=\mathbf{E}_xf(X_t)$ for any $t\geq 0, f\in b\mathcal{B}(E)$ and $x\in E$, where $b\mathcal{B}(E)$ is all bounded Borel measurable functions on $E$. The semigroup $(T_t)_{t\geq 0}$ is a strongly continuous contraction semigroup on $L^2(E,m)$ associated with $(\EE,\FF)$. If $A$ is an invariant set of $X$ (see \S\ref{SEC21}), then the restriction of $(\EE,\FF)$ to $A$ is denoted by $(\EE^A,\FF^A)$ and the restriction of $X$ to $A$ is denoted by $X^A$. If $U$ is an open subset of $E$, then the part Dirichlet form of $(\EE,\FF)$ on $U$ is denoted by $(\EE_U,\FF_U)$ and the part process of $X$ on $U$ is denoted by $X_U$.
All terminologies about Dirichlet forms are standard and we refer them to \cite{CF12, FOT11}.

\section{Basic properties of regular Dirichlet extensions}\label{SEC0}

In this section, we summarize several basic properties of regular Dirichlet extensions or subspaces, which are contained in \cite{LS16, LS16-2, LUY15, LY15, LY15-3, LY15-2}. We always fix two regular Dirichlet forms $(\EE^1,\FF^1)$ and $(\EE^2,\FF^2)$ on $L^2(E,m)$ and assume that $(\EE^1,\FF^1)$ is a regular Dirichlet subspace of $(\EE^2,\FF^2)$, equivalently $(\EE^2,\FF^2)$ is a regular Dirichlet extension of $(\EE^1,\FF^1)$.

The first theorem is taken from \cite{LY15}, and it characterizes Beurling-Deny decompositions of regular Dirichlet subspaces or extensions.

\begin{theorem}[Theorem~2.1, \cite{LY15}]\label{THM21}
 Let $(J_1, k_1)$ and $(J_2,k_2)$ be the jumping and killing measures in the Beurling-Deny decompositions of $(\EE^1,\FF^1)$ and $(\EE^2,\FF^2)$ respectively. Then $J_1=J_2$ and $k_1=k_2$.
\end{theorem}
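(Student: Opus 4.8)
The plan is to recover the jumping measure $J_i$ and the killing measure $k_i$ from the Beurling--Deny decomposition of $(\EE^i,\FF^i)$ by testing the form against carefully chosen functions in $\FF^1\cap C_c(E)$, and then to exploit the identity $\EE^2(u,v)=\EE^1(u,v)$ for $u,v\in\FF^1$ together with the regularity of the subspace $(\EE^1,\FF^1)$. Recall that for $u,v\in\FF^i\cap C_c(E)$,
\[
\EE^i(u,v)=\EE^{i,(c)}(u,v)+\int_{E\times E\setminus d}(u(x)-u(y))(v(x)-v(y))\,J_i(dx,dy)+\int_E u(x)v(x)\,k_i(dx),
\]
where $\EE^{i,(c)}$ is the strongly local part. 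I would treat the jumping measure first, since the comparison of killing measures relies on already knowing $J_1=J_2$. Note that the strongly local parts $\EE^{1,(c)}$ and $\EE^{2,(c)}$ need not coincide (this is precisely where the scale functions may differ), so the argument must isolate $J_i$ and $k_i$ without reference to the local part.

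For the jumping measure I would take $u,v\in\FF^1\cap C_c(E)$ with disjoint supports. Because the closed supports of $u$ and $v$ are compact and disjoint, $u$ vanishes on an open neighborhood of $\mathrm{supp}(v)$; strong locality then kills $\EE^{i,(c)}(u,v)$, while $uv\equiv 0$ kills the killing term. Expanding the jumping integrand and using the symmetry of $J_i$ leaves
\[
\EE^i(u,v)=-2\int_{E\times E}u(x)\,v(y)\,J_i(dx,dy).
\]
Since $\EE^1(u,v)=\EE^2(u,v)$ on $\FF^1$, the integrals of $u\otimes v$ against $J_1$ and $J_2$ agree for all such $u,v$. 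To promote this to $J_1=J_2$ on $E\times E\setminus d$, I would fix relatively compact open sets $V_1,V_2$ with $\overline{V_1}\cap\overline{V_2}=\emptyset$ and use the regularity of $(\EE^1,\FF^1)$: it supplies point-separating functions in $\FF^1\cap C_c(E)$ supported in $V_1$, respectively $V_2$, so the products $u\otimes v$ span a uniformly dense subset of $C_c(V_1\times V_2)$. As $J_1,J_2$ are positive Radon measures, this forces $J_1=J_2$ on $V_1\times V_2$, and covering $E\times E\setminus d$ by such boxes yields $J_1=J_2$.

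For the killing measure I would fix $v\in\FF^1\cap C_c(E)$ and choose, again using regularity of $(\EE^1,\FF^1)$, a cut-off $w\in\FF^1\cap C_c(E)$ with $w\equiv 1$ on an open neighborhood $U$ of $\mathrm{supp}(v)$. Then $\EE^{i,(c)}(w,v)=0$ by strong locality (as $w$ is constant near $\mathrm{supp}(v)$), the killing term reduces to $\int_E v\,dk_i$, and the jumping term reduces to an integral depending only on $J_i$, $w$, $v$, namely
\[
\EE^i(w,v)=2\int_{\{x\in U,\,y\notin U\}}(1-w(y))v(x)\,J_i(dx,dy)+\int_E v\,dk_i.
\]
Because $w,v\in\FF^1$ give $\EE^1(w,v)=\EE^2(w,v)$, and because $J_1=J_2$ makes the jumping terms cancel, I obtain $\int_E v\,dk_1=\int_E v\,dk_2$ for every $v\in\FF^1\cap C_c(E)$; the regularity of $(\EE^1,\FF^1)$ and the fact that $k_1,k_2$ are Radon then give $k_1=k_2$.

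The main obstacle is the measure-determination step rather than any single computation: one must convert equalities of the forms tested against the restricted class $\FF^1\cap C_c(E)$ into equalities of measures, and the only tool available is the regularity of the subspace, which must supply both the point-separating products $u\otimes v$ off the diagonal and the cut-off functions $w\equiv 1$ on prescribed compacts with controlled supports. A secondary but essential point is the ordering: the killing comparison genuinely needs $J_1=J_2$ in hand, since otherwise the residual jumping term in the last identity would not cancel.
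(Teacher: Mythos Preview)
The paper does not supply its own proof of this theorem: it is quoted verbatim from \cite{LY15} (see the sentence immediately preceding the statement, ``The first theorem is taken from \cite{LY15}\ldots''), and no argument is reproduced here. Consequently there is nothing in the present paper to compare your proposal against.

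That said, your argument is correct and is essentially the standard one. The two key mechanisms you isolate---testing with $u,v\in\FF^1\cap C_c(E)$ of disjoint compact support to single out $J_i$, and then testing with a cutoff $w\equiv 1$ near $\mathrm{supp}(v)$ to single out $k_i$ once $J_1=J_2$ is known---are exactly the devices used to recover the jumping and killing measures from a regular Dirichlet form (cf.\ \cite[Theorem~4.5.2 and its proof]{FOT11}). Your emphasis on regularity of the \emph{subspace} $(\EE^1,\FF^1)$ (rather than of $(\EE^2,\FF^2)$) is the correct one, since all test functions must lie in $\FF^1$ for the identity $\EE^1=\EE^2$ to apply; and your observation that the ordering matters---$J_1=J_2$ must be established first so the residual jumping term cancels in the killing comparison---is also correct. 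The measure-determination step via Stone--Weierstrass on products $u\otimes v$ in $C_c(V_1\times V_2)$ is routine once one notes $J_i$ is Radon off the diagonal.
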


 As a corollary of this result, if one of $(\EE^1,\FF^1)$ and $(\EE^2, \FF^2)$ is strongly local or local, then the other one has to be strongly local or local. Particularly, both regular Dirichlet subspaces and extensions of $(\frac{1}{2}\mathbf{D}, H^1(\mathbb{R}))$ must be a strongly local Dirichlet form.

The following proposition describes the quasi notions of $(\EE^1,\FF^1)$ and $(\EE^2,\FF^2)$. Its proof is obvious from the fact $\text{Cap}^1(A)\geq \text{Cap}^2(A)$ for any appropriate set $A$, where $\text{Cap}^1$ and $\text{Cap}^2$ are the $1$-Capacities of $(\EE^1,\FF^1)$ and $(\EE^2,\FF^2)$ respectively.

\begin{proposition}[Remark~1.1, \cite{LY15}]\label{PRO22}
The following assertions hold.
\begin{itemize}
\item[(1)] An $\EE^1$-polar set is $\EE^2$-polar.
\item[(2)] An $\EE^1$-nest is also an $\EE^2$-nest.
\item[(3)] An $\EE^1$-quasi continuous function is also $\EE^2$-quasi continuous.
\end{itemize}
\end{proposition}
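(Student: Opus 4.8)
The plan is to reduce all three assertions to a single capacity comparison, namely
\[
\text{Cap}^1(A)\geq \text{Cap}^2(A),\qquad A\subset E,
\]
where $\text{Cap}^i$ is the $1$-capacity of $(\EE^i,\FF^i)$. First I would verify this for an open set $U$ straight from the definition. For such $U$ one has
\[
\text{Cap}^i(U)=\inf\bigl\{\EE^i_1(u,u): u\in\FF^i,\ u\geq 1\ m\text{-a.e. on }U\bigr\},
\]
with the infimum over the empty family read as $+\infty$. Because $\FF^1\subset\FF^2$ by \eqref{EQ1FFE}, every competitor for $\text{Cap}^1(U)$ is also a competitor for $\text{Cap}^2(U)$, and by the same display the two energies agree on $\FF^1$, i.e. $\EE^2_1(u,u)=\EE^1_1(u,u)$ for $u\in\FF^1$. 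Hence the infimum defining $\text{Cap}^2(U)$ runs over a larger family on which the functional coincides, which yields $\text{Cap}^2(U)\leq\text{Cap}^1(U)$.

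Passing to an arbitrary $A\subset E$ is then immediate, since taking the infimum over open $U\supset A$ on both sides preserves the inequality. Assertion~(1) drops out at once: an $\EE^1$-polar set satisfies $\text{Cap}^1(A)=0$, so $0\leq\text{Cap}^2(A)\leq\text{Cap}^1(A)=0$. For assertion~(2) I would invoke the capacity characterization of nests valid for regular Dirichlet forms: an increasing sequence of closed sets $\{F_k\}$ is an $\EE^i$-nest precisely when $\text{Cap}^i(K\setminus F_k)\to 0$ for every compact $K$. Given an $\EE^1$-nest $\{F_k\}$, the comparison forces $\text{Cap}^2(K\setminus F_k)\leq\text{Cap}^1(K\setminus F_k)\to 0$, so $\{F_k\}$ is an $\EE^2$-nest. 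Assertion~(3) is then purely formal: an $\EE^1$-quasi continuous $u$ comes with an $\EE^1$-nest $\{F_k\}$ on each of whose members $u$ is continuous; by~(2) the same sequence is an $\EE^2$-nest, so $u$ is $\EE^2$-quasi continuous as well.

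I do not expect any serious obstacle here, as the argument is essentially bookkeeping once the capacity comparison is available. The only steps that call for attention are getting the monotone limit from open sets to arbitrary sets right and invoking the correct capacity description of nests; both are standard (Cf. \cite{FOT11}), and the whole proof is indeed as transparent as the remark preceding the proposition suggests.
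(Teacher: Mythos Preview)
Your proposal is correct and follows exactly the approach the paper indicates: the paper simply states that the proof ``is obvious from the fact $\text{Cap}^1(A)\geq \text{Cap}^2(A)$'' and gives no further details, so what you have written is a faithful expansion of that one-line remark into a full argument.
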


Another proposition states the relation of their global properties. 

\begin{proposition}[Remark~3.5, \cite{LY15-3}]\label{PRO23}
The following assertions hold.
\begin{itemize}
\item[(1)] If a Dirichlet form is transient, then its regular Dirichlet subspace is also  transient.
\item[(2)] If a Dirichlet form is recurrent, then its regular Dirichlet extension is also recurrent.
\item[(3)] If a Dirichlet form is strongly local and irreducible, then its regular Dirichlet subspace is also irreducible.
\end{itemize}
\end{proposition}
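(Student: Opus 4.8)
The plan is to invoke the standard global characterizations of transience, recurrence and irreducibility and to transport them across the inclusion $\FF^1\subset\FF^2$, using throughout two structural facts: (a) $\EE^2=\EE^1$ on the common domain $\FF^1$, and (b) the extended Dirichlet spaces satisfy $\FF^1_{\mathrm e}\subset\FF^2_{\mathrm e}$ with $\EE^2=\EE^1$ on $\FF^1_{\mathrm e}$. Fact (b) is immediate: if $u\in\FF^1_{\mathrm e}$ has an $\EE^1$-approximating sequence $\{u_n\}\subset\FF^1\subset\FF^2$ with $u_n\to u$ $m$-a.e., then $\{u_n\}$ is also $\EE^2$-Cauchy because $\EE^2(u_n-u_k,u_n-u_k)=\EE^1(u_n-u_k,u_n-u_k)$, whence $u\in\FF^2_{\mathrm e}$ and $\EE^2(u,u)=\lim_n\EE^2(u_n,u_n)=\EE^1(u,u)$.

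For (1) I would use the reference-function criterion for transience: $(\EE^2,\FF^2)$ is transient iff there is a bounded $g\in L^1(E,m)$, strictly positive $m$-a.e., with $\int_E|u|\,g\,dm\le\EE^2(u,u)^{1/2}$ for all $u\in\FF^2$. Restricting this inequality to $u\in\FF^1\subset\FF^2$ and using $\EE^2(u,u)=\EE^1(u,u)$ shows that the same $g$ is a reference function for $(\EE^1,\FF^1)$, which is therefore transient. For (2) I would use the criterion that $(\EE^1,\FF^1)$ is recurrent iff $1\in\FF^1_{\mathrm e}$ and $\EE^1(1,1)=0$; by fact (b) this forces $1\in\FF^2_{\mathrm e}$ and $\EE^2(1,1)=\EE^1(1,1)=0$, so the regular extension $(\EE^2,\FF^2)$ is recurrent. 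Both arguments are routine once (a)--(b) are in place.

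The substance is (3), where strong locality is essential. First, by Theorem~\ref{THM21} the form $(\EE^1,\FF^1)$ is again strongly local (it has the same vanishing jumping and killing measures as $(\EE^2,\FF^2)$), so it carries energy measures $\mu^1_{\langle u\rangle}$. The mechanism I would exploit is the energy-measure description of invariant sets valid for strongly local forms: a Borel set $A$ is invariant precisely when its indicator lies locally in the domain and has vanishing energy measure, i.e. $1_A\in\FF^1_{\mathrm{loc}}$ with $\mu^1_{\langle 1_A\rangle}=0$. The crucial compatibility is that the two energy measures agree on the common domain: for $u\in\FF^1\cap L^\infty$ and $\phi\in\FF^1\cap C_c(E)$ one has $\int_E\phi\,d\mu^i_{\langle u\rangle}=2\EE^i(u\phi,u)-\EE^i(u^2,\phi)$, and since $u\phi,u^2\in\FF^1$ and $\EE^1=\EE^2$ there, the right-hand sides coincide for $i=1,2$; density of $\FF^1\cap C_c(E)$ in $C_c(E)$ then gives $\mu^1_{\langle u\rangle}=\mu^2_{\langle u\rangle}$, and the locality of energy measures passes this on to bounded functions in $\FF^1_{\mathrm{loc}}$.

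Granting this, the proof of (3) is short: if $A$ is $\EE^1$-invariant then $1_A\in\FF^1_{\mathrm{loc}}\subset\FF^2_{\mathrm{loc}}$ and $\mu^2_{\langle 1_A\rangle}=\mu^1_{\langle 1_A\rangle}=0$, so $A$ is $\EE^2$-invariant; since $(\EE^2,\FF^2)$ is irreducible, $m(A)=0$ or $m(A^c)=0$, and hence $(\EE^1,\FF^1)$ is irreducible. (Contrapositively, a nontrivial invariant set of the subspace would yield one for the extension.) The main obstacle I anticipate is precisely the rigorous justification of the invariant-set/energy-measure dictionary together with the localization step: one must make sense of $1_A$ as an element of $\FF^1_{\mathrm{loc}}$ even when $m(A)=\infty$, verify that the energy-measure identity survives the passage from $\FF^1\cap L^\infty$ to local bounded functions, and check that ``$\mu_{\langle 1_A\rangle}=0$'' is genuinely equivalent to invariance rather than merely necessary. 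Here the quasi-notion comparisons of Proposition~\ref{PRO22}---that $\EE^1$-polar sets and $\EE^1$-nests remain $\EE^2$-polar and $\EE^2$-nests---are what let the quasi-continuous versions involved be chosen consistently for both forms.
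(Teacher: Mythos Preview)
Your arguments for (1) and (2) are correct and coincide with the paper's: both are just unpackings of the transience/recurrence criteria in \cite[Theorem~1.6.4]{FOT11} (reference function for transience, $1\in\FF_{\mathrm e}$ with zero energy for recurrence), transported along $\FF^1_{\mathrm e}\subset\FF^2_{\mathrm e}$.

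For (3) your approach is also correct but differs from the paper's. You use the energy-measure characterization of invariant sets ($1_A\in\FF_{\mathrm{loc}}$ with $\mu_{\langle 1_A\rangle}=0$) together with the observation that $\mu^1_{\langle u\rangle}=\mu^2_{\langle u\rangle}$ on the common domain. The paper instead appeals to \cite[Theorem~4.6.4]{FOT11}, which for a strongly local regular form characterizes invariant sets purely in terms of $1_A$ admitting an $\EE$-quasi-continuous version; the transfer from $\EE^1$ to $\EE^2$ is then immediate from Proposition~\ref{PRO22}~(3) (an $\EE^1$-quasi-continuous function is $\EE^2$-quasi-continuous). This bypasses the localization and energy-measure compatibility issues you flagged as the ``main obstacle,'' and makes the proof a one-liner. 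Your route works, but requires the extra bookkeeping you anticipated---verifying $\FF^1_{\mathrm{loc}}\subset\FF^2_{\mathrm{loc}}$ and extending the energy-measure identity to local bounded functions---whereas the quasi-continuity route sidesteps this by packaging everything into a single topological-analytic property of $1_A$ that the capacity comparison $\mathrm{Cap}^1\ge\mathrm{Cap}^2$ preserves directly.
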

\begin{proof}
The first and second assertions are the direct corollaries of \cite[Theorem~1.6.4]{FOT11}. The third assertion follows from Proposition~\ref{PRO22}~(3) and \cite[Theorem~4.6.4]{FOT11}.
\end{proof}

The following characterization via the extended Dirichlet spaces is very simple but sometimes very useful.

\begin{proposition}
Let $\FF^1_\mathrm{e}$ and $\FF^2_\mathrm{e}$ be the extended Dirichlet spaces of $(\EE^1,\FF^1)$ and $(\EE^2,\FF^2)$ respectively. Then $(\EE^1,\FF^1)$ is a regular Dirichlet subspace of $(\EE^2,\FF^2)$ if and only if
\[
	\FF^1_\mathrm{e}\subset \FF^2_\mathrm{e},\quad \EE^2(f,g)=\EE^1(f,g),\quad f,g\in \FF^1_\mathrm{e}.
\]
Furthermore, if $(\EE^1,\FF^1)$ is a proper one in addition, $\FF^1_\mathrm{e}\neq \FF^2_\mathrm{e}$.
\end{proposition}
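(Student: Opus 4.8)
The plan is to reduce everything to two standard facts about extended Dirichlet spaces (see \cite[Theorem~1.5.2]{FOT11} and the surrounding discussion): first, the recovery identity $\FF=\FF_\mathrm{e}\cap L^2(E,m)$ valid for any Dirichlet form $(\EE,\FF)$; and second, that $\FF_\mathrm{e}$ consists precisely of those $m$-measurable functions $f$ (finite $m$-a.e.) for which there is an $\EE$-Cauchy sequence $\{f_n\}\subset\FF$ with $f_n\to f$ $m$-a.e., the energy being extended by $\EE(f,f)=\lim_n\EE(f_n,f_n)$, and $\EE(f,g)$ by polarization, independently of the chosen approximating sequence. Once these are in hand, both implications become manipulations of approximating sequences.

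For the reverse implication I would argue directly. Assuming $\FF^1_\mathrm{e}\subset\FF^2_\mathrm{e}$ with $\EE^2=\EE^1$ on $\FF^1_\mathrm{e}$, the recovery identity gives
\[
\FF^1=\FF^1_\mathrm{e}\cap L^2(E,m)\subset\FF^2_\mathrm{e}\cap L^2(E,m)=\FF^2,
\]
while the energy agreement on $\FF^1\subset\FF^1_\mathrm{e}$ is inherited from the agreement on $\FF^1_\mathrm{e}$; hence \eqref{EQ1FFE} holds and $(\EE^1,\FF^1)$ is a regular Dirichlet subspace of $(\EE^2,\FF^2)$.

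For the forward implication, suppose \eqref{EQ1FFE} holds and take $f\in\FF^1_\mathrm{e}$ with an $\EE^1$-Cauchy approximating sequence $\{f_n\}\subset\FF^1$, $f_n\to f$ $m$-a.e. Since $\FF^1\subset\FF^2$ and $\EE^2=\EE^1$ on $\FF^1$, the crucial observation is
\[
\EE^2(f_n-f_k,f_n-f_k)=\EE^1(f_n-f_k,f_n-f_k),
\]
so $\{f_n\}$ is simultaneously an $\EE^2$-Cauchy sequence in $\FF^2$ converging $m$-a.e. to $f$; by the characterization of $\FF^2_\mathrm{e}$ this yields $f\in\FF^2_\mathrm{e}$, proving $\FF^1_\mathrm{e}\subset\FF^2_\mathrm{e}$. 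Because the very same sequence $\{f_n\}$ approximates $f$ in both extended spaces, passing to the limit in $\EE^2(f_n,f_n)=\EE^1(f_n,f_n)$ gives $\EE^2(f,f)=\EE^1(f,f)$, and polarization upgrades this to $\EE^2(f,g)=\EE^1(f,g)$ for all $f,g\in\FF^1_\mathrm{e}$.

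Finally, the ``furthermore'' assertion is handled by its contrapositive: if $\FF^1_\mathrm{e}=\FF^2_\mathrm{e}$, then intersecting with $L^2(E,m)$ and applying the recovery identity twice gives $\FF^1=\FF^2$, so $(\EE^1,\FF^1)$ is not proper. I do not anticipate a genuine obstacle, as the whole argument is bookkeeping with the definition of $\FF_\mathrm{e}$; the only point demanding care is the transfer of the Cauchy property, which hinges on using \emph{one and the same} approximating sequence for both forms together with the energy identity holding on all of $\FF^1$ rather than merely on a core. The sequence-independence of $\lim_n\EE^i(f_n,f_n)$ is exactly what the standard theory supplies, so no extra work is needed there.
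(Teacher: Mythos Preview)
Your proof is correct and complete. The paper itself omits the proof entirely, merely remarking beforehand that the characterization ``is very simple but sometimes very useful''; your argument via the recovery identity $\FF=\FF_\mathrm{e}\cap L^2(E,m)$ and the transfer of a single approximating $\EE^1$-Cauchy sequence to an $\EE^2$-Cauchy sequence is precisely the routine verification the authors are leaving to the reader, so there is no methodological divergence to discuss.
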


The next proposition will be frequently used in \S\ref{SEC224}. The proof is direct from the definition of part Dirichlet form  (Cf. \cite[\S4.4]{FOT11}).

\begin{proposition}\label{PRO25}
Let $U$ be an open subset of $E$. The part Dirichlet forms of $(\EE^1,\FF^1)$ and $(\EE^2, \FF^2)$ on $U$ are denoted by $(\EE_U^1,\FF_U^1)$ and $(\EE_U^2, \FF_U^2)$. Then $(\EE^1_U,\FF^1_U)$ is a regular Dirichlet subspace of $(\EE^2_U,\FF^2_U)$.
\end{proposition}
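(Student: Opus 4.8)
The plan is to work directly from the definition of the part Dirichlet form recorded in \cite[\S4.4]{FOT11}. Recall that for a regular Dirichlet form $(\EE^i,\FF^i)$ and an open set $U$, the part form is given by $\FF^i_U=\{u\in \FF^i: \tilde u=0\ \EE^i\text{-q.e. on } E\setminus U\}$, with $\EE^i_U$ the restriction of $\EE^i$ to $\FF^i_U\times\FF^i_U$, where $\tilde u$ denotes an $\EE^i$-quasi-continuous version of $u$; by the general theory each $(\EE^i_U,\FF^i_U)$ is itself a regular Dirichlet form on $L^2(U;m|_U)$. Thus it remains only to verify the two defining conditions \eqref{EQ1FFE} for the pair $(\EE^1_U,\FF^1_U)$ and $(\EE^2_U,\FF^2_U)$, namely the inclusion $\FF^1_U\subset \FF^2_U$ and the agreement of the forms on $\FF^1_U$.

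First I would establish the inclusion. Let $u\in\FF^1_U$. Then $u\in\FF^1\subset\FF^2$ by the subspace relation. The delicate point is to reconcile the two notions of ``vanishing quasi-everywhere off $U$'': I would invoke Proposition~\ref{PRO22}~(3) to conclude that the $\EE^1$-quasi-continuous version $\tilde u$ is simultaneously $\EE^2$-quasi-continuous, so it may serve as the representative of $u$ in both capacities. Since the set $\{\tilde u\neq 0\}\cap(E\setminus U)$ is $\EE^1$-polar, Proposition~\ref{PRO22}~(1) shows it is also $\EE^2$-polar; hence $\tilde u=0$ $\EE^2$-q.e. on $E\setminus U$, which is exactly the condition placing $u$ in $\FF^2_U$.

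The agreement of the forms is then immediate: for $u,v\in\FF^1_U\subset\FF^1$ one has $\EE^1_U(u,v)=\EE^1(u,v)=\EE^2(u,v)=\EE^2_U(u,v)$, where the middle equality is \eqref{EQ1FFE} for the ambient forms and the outer equalities are merely the definitions of the part forms as restrictions. Combining the two steps yields that $(\EE^1_U,\FF^1_U)$ is a regular Dirichlet subspace of $(\EE^2_U,\FF^2_U)$.

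The only genuine subtlety, and the step I expect to be the main obstacle, is the reconciliation of the two capacities in the definition of $\FF^i_U$: a priori the quasi-continuous versions and the exceptional sets are tied to different capacities, so one must ensure the condition transfers between them. This is precisely what Proposition~\ref{PRO22} supplies, so no essentially new argument is required beyond the general part-form theory of \cite[\S4.4]{FOT11}.
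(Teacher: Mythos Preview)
Your argument is correct and is exactly the detailed version of what the paper gestures at: the paper merely states that the proof is ``direct from the definition of part Dirichlet form (Cf.\ \cite[\S4.4]{FOT11}),'' and what you have written is precisely that unpacking, with Proposition~\ref{PRO22} supplying the passage from $\EE^1$-quasi-notions to $\EE^2$-quasi-notions. There is no difference in approach.
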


\section{Representation of regular Dirichlet extensions}\label{SEC2}

\subsection{Main result}\label{SEC21}
The existence problem \textbf{(Q.1)} for one-dimensional Brownian motion is already answered in the next paragraph after this problem in \S\ref{SEC1}. Indeed, the one-dimensional Brownian motion has proper regular Dirichlet extensions such as those with the scale functions in the class \eqref{EQ1TRT}. Particularly, they are all irreducible. In this section, we shall treat the second problem \textbf{(Q.2)}.

Before presenting the main theorem, we need to do some preparatory works. Let $(\EE,\FF)$ be a regular Dirichlet form on $L^2(E,m)$ associated with a symmetric Hunt process $X$. A Borel subset $A\subset E$ is called an invariant set of $(X_t)_{t\geq 0}$ provided for any $x\in A$,
\[
\mathbf{P}_x(X_t\in A, \forall t)=1.
\]
Clearly, the restriction denoted by $X^A$ or $(X^A_t)_{t\geq 0}$ of $X$ to $A$ is still a Hunt process and symmetric with respect to $m|_A$. Its associated Dirichlet form on $L^2(A, m|_A)$ is (see \cite[\S2.1]{CF12})
\[
\begin{aligned}
\FF^A:=\{f|_A: f\in \FF\},\quad \EE^A(f|_A,g|_A):=\EE(1_Af,1_Ag),\quad f,g\in \FF.
\end{aligned}
\]
We call $(\EE^A, \FF^A)$ the restriction of the Dirichlet form $(\EE,\FF)$ to the invariant set $A$.

Another preparatory work is to introduce a few classes of scale functions. Let $a<b$ and $I=\langle a, b\rangle$ be an interval such that $a$ or $b$ may or may not be in $I$. In other words, $I=(a,b), (a,b], [a,b)$ or $[a,b]$. Particularly, $a$ or $b$ may be infinity if $a$ or $b\notin I$. The interior of $I$ is denoted by $\overset{\circ}{I}:=(a,b)$. A scale function $\tt$ on $I$ is a strictly increasing and continuous function on $I$. Thus we can always define its limit at boundary
\[
	\tt(a):=\lim_{x\downarrow a}\tt(x)\geq -\infty,\quad (\text{resp. }\tt(b):=\lim_{x\uparrow b} \tt(x)\leq \infty)
\]
no matter $a$ (resp. $b$) belongs to $I$ or not. Denote all the scale functions $\tt$ on $I$ satisfying
\[
	dx\ll d\tt,\quad  \frac{dx}{d\tt}=0\text{ or }1, \quad d\tt\text{-a.e.}
\]
by $\TT(I)$ (see \eqref{EQ1TRT}). A subset of $\TT(I)$ is defined as
\begin{equation}
\TT_\infty(I):=\{\tt\in \TT(I)\, |\, \tt(a)=-\infty\text{ iff }a\notin I, \tt(b)=\infty\text{ iff }b\notin I\},
\end{equation}
where `iff' stands for `if and only if'.

\begin{remark}
Note that in any case the class $\TT_\infty(I)$ of scale functions is not empty.
 For example let us treat the case $I=[a,b)$ with $b<\infty$. The other cases can be treated similarly. By \cite[Theorem~4.1]{FFY05}, we need only to find a scale function
\[
	\ss: [0, \infty) \rightarrow [a,b)
\]
such that $d\ss\ll dx, \ss'=0$ or $1$. Then $\tt:=\ss^{-1}\in \TT_{\infty}(I)$.

Take a measure-dense subset $G\subset [0,\infty)$. For example, assume $\{q_n: n\geq 1\}$ is the total of positive rational numbers and let
\[
	G:=\left\{\bigcup_{n\geq 1} B\left(q_n, \frac{1}{2^n}\right)\right\} \cap [0,\infty),
\]
where $B(x,r):=\{y: |y-x|<r\}$. Clearly, the Lebesgue measure of $G$ is positive, i.e. $|G|>0$. Set $k:=|G|/|b-a|$ and $G':=\{x: kx\in G\}$. Note that $G'$ is still measure-dense. In fact, take any open interval $(c,d)\subset I$, we have
\[
|G'\cap (c,d)|= \frac{1}{k}|G\cap (kc, kd)|>0.
\]
Let
\[
	\ss(x):=\int_0^x 1_{G'}(y)dy+a,\quad x\geq 0.
\]
Then $\ss$ is strictly increasing and absolutely continuous, $\ss'=1_{G'}$, $\ss(0)=a$ and
\[
	\ss(\infty)=\int_0^\infty 1_{G'}(y)dy+a=\frac{1}{k}\cdot |G|+a=b.
\]
\end{remark}

\begin{remark}
Similar to \cite[Theorem~4.1]{FFY05}, we may also deduce that any scale function $\tt\in \TT_\infty(I)$ can be written as
\[
\tt(x)=x+c(x)
\]
for a non-decreasing singular continuous function $c$ on $I$.
\end{remark}

Note that the scale functions of an irreducible diffusion process are not unique and may differ by a constant if its speed measure is fixed.
To avoid this uncertainty, we make the following restriction on $\TT_\infty(I)$:
\begin{equation}\label{EQ2TIT}
\TT^0_\infty(I):=\left\{\tt\in \TT_\infty(I): \tt\left(e\right)=0\right\},
\end{equation}
where $e$ is a fixed point in $(a,b)$: $e=(a+b)/2$ if $a>-\infty, b<\infty$, $e=b-1$ if $a=-\infty, b<\infty$, $e=a+1$ if $a>-\infty, b=\infty$ and $e=0$ if $a=-\infty, b=\infty$. The choice of $e$ is not essential.

Now we are in a position to state the main result of this section.
Note that $m$ represents the Lebesgue measure on $\mathbb{R}$ in  the following theorem.

\begin{theorem}\label{THM23}
The Dirichlet form $(\EE,\FF)$ is a regular Dirichlet extension of $(\frac{1}{2}\mathbf{D},H^1(\mathbb{R}))$ on $L^2(\mathbb{R})$ if and only if
there exist a set of at most countable disjoint intervals $\{I_n=\langle a_n, b_n\rangle: n\geq 1\}$,
satisfying that $ \left(\bigcup_{n\geq 1} I_n\right)^c$ 
has Lebesgue measure zero, and a scale function $\tt_n\in \TT^0_\infty(I_n)$ for each $ n\geq 1$ such that
\begin{equation}\label{EQ2FFL}
\begin{aligned}
&\FF=\left\{f\in L^2(\mathbb{R}): f|_{I_n}\in \FF^n, \sum_{n\geq 1}\EE^n(f|_{I_n}, f|_{I_n})<\infty\right\},  \\
&\EE(f,g)=\sum_{n\geq 1}\EE^n(f|_{I_n}, g|_{I_n}),\quad f,g\in \FF,
\end{aligned}\end{equation}
where for each $n\geq 1$, $(\EE^n,\FF^n)$ is expressed as
\begin{equation}\label{EQ2FNU}
\begin{aligned}
&\FF^n=\left\{f\in L^2(I_n): f\ll \tt_n, \int_{I_n}\left(\frac{df}{d\tt_n}\right)^2d\tt_n<\infty \right\}, \\
&\EE^n(f,g)=\frac{1}{2}\int_{I_n}\frac{df}{d\tt_n}\frac{dg}{d\tt_n}d\tt_n,\quad f,g\in \FF^n.
\end{aligned}
\end{equation}
Moreover, the intervals $\{I_n: n\geq 1\}$ and scale functions $\{\tt_n\in \mathbf{T}^0_\infty(I_n):n\geq 1\}$
are uniquely determined, if the difference of order is ignored.
\end{theorem}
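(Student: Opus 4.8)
The plan is to establish the two implications and the uniqueness separately, after first reducing the whole problem to the classical structure theory of one-dimensional diffusions. The qualitative input is quick: by Theorem~\ref{THM21} the jumping and killing measures of any regular Dirichlet extension $(\EE,\FF)$ agree with those of $(\frac12\mathbf D, H^1(\mathbb R))$ and hence vanish, so $(\EE,\FF)$ is strongly local; by Proposition~\ref{PRO23}(2) it is recurrent, therefore conservative. Thus the associated process $X$ is a conservative diffusion on $\mathbb R$, symmetric with respect to Lebesgue measure $m$. The genuinely new feature, highlighted by the Example, is that $X$ need not be irreducible, so the core of the argument is an invariant decomposition.

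For the necessity, I would first produce the intervals. Since $X$ has continuous paths, any two points in one irreducible invariant component are joined by a path and, by the intermediate value property, so is every point between them; hence the maximal irreducible invariant components are intervals $I_n=\langle a_n,b_n\rangle$, necessarily nondegenerate (the speed measure $m$ is nonatomic) and therefore at most countably many. On each $I_n$ the restriction $X^{I_n}$ is an irreducible conservative diffusion whose symmetrizing, hence speed, measure is $m|_{I_n}$; by \cite[Theorem~3.1]{FHY10} its form is precisely $(\EE^n,\FF^n)$ of \eqref{EQ2FNU} for a suitable scale $\tt_n$. Transferring the extension property \eqref{EQ1FFE} to the open interval $(a_n,b_n)$ via the part Dirichlet form (Proposition~\ref{PRO25}) and invoking \cite[Theorem~4.1]{FFY05} forces $dx\ll d\tt_n$ with $dx/d\tt_n\in\{0,1\}$, i.e. $\tt_n\in\TT(I_n)$. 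The refined membership $\tt_n\in\TT_\infty(I_n)$ is exactly the analytic encoding of invariance: by the classical boundary classification (cf. \cite{IM74}), with speed measure Lebesgue a finite endpoint is inaccessible precisely when the scale diverges there, so $\tt_n(a_n)=-\infty$ iff $a_n\notin I_n$ and similarly at $b_n$, while an infinite endpoint is automatically natural. Finally, the irreducible decomposition of the recurrent form exhausts $\mathbb R$ up to an $m$-null set, so $\left(\bigcup_n I_n\right)^c$ is Lebesgue null; its points are barrier points never charged by $X$, hence $\EE$-polar. Normalising each $\tt_n$ by $\tt_n(e)=0$ puts it in $\TT^0_\infty(I_n)$ and yields \eqref{EQ2FFL}.

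For the sufficiency I would run this in reverse. Each $(\EE^n,\FF^n)$ with $\tt_n\in\TT^0_\infty(I_n)$ is the regular Dirichlet form of an irreducible diffusion on $L^2(I_n)$; the one delicate point, already exhibited in the Example, is that the natural-boundary condition built into $\TT_\infty$ allows a function to be cut off near an open endpoint at arbitrarily small energy because $\tt_n$ blows up there. Since $\bigcup_n I_n$ is co-null, the orthogonal sum $(\EE,\FF)=\bigoplus_n(\EE^n,\FF^n)$ is a Dirichlet form on $L^2(\mathbb R)=\bigoplus_n L^2(I_n,m)$, and its regularity on all of $\mathbb R$ reduces, via the same cut-off trick, to approximating $C_c(\mathbb R)$-functions across the junctions between adjacent intervals. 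That $(\EE,\FF)$ extends $(\frac12\mathbf D, H^1(\mathbb R))$ with matching energy is immediate from $dx/d\tt_n\in\{0,1\}$: an $H^1(\mathbb R)$-function restricted to $I_n$ lies in $\FF^n$ and $\EE^n$ reproduces $\frac12\int_{I_n}(f')^2\,dx$.

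Uniqueness holds because both data are intrinsic to $X$: the $\{I_n\}$ are the maximal irreducible invariant components (determined up to order), and on each $I_n$ the scale function of a diffusion with fixed speed $m|_{I_n}$ is unique up to an affine map $\tt\mapsto\alpha\tt+\beta$; the constraint $dx/d\tt_n\in\{0,1\}$ forces $\alpha=1$ and the normalisation $\tt_n(e)=0$ forces $\beta=0$, exactly as in \cite{FFY05}. I expect the necessity direction to be the main obstacle, specifically the rigorous construction of the invariant interval decomposition together with the twin claims that the exceptional set is simultaneously Lebesgue null and $\EE$-polar; controlling this "violence of irreducibility" is where path-continuity of $X$, its recurrence, and the quasi-notions of Proposition~\ref{PRO22} must be combined carefully.
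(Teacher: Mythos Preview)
Your plan captures the overall architecture and correctly identifies strong locality, recurrence, and the role of the boundary classification. For necessity the paper works more concretely than you do: rather than invoking ``maximal irreducible invariant components'' abstractly, it uses the It\^o--McKean point classification (regular, left/right singular, shunt, trap), proving in particular that symmetry forces a shunt point to be a genuine barrier (Lemma~\ref{LM26}) and that the singular set $F$ is nowhere dense with $F\setminus\{a_n,b_n\}$ consisting of traps (Lemma~\ref{LM29}). Your approach could be made to work, but pinning down exactly which endpoints are adjoined to which open interval requires essentially these same ingredients. Note also that the claim $m\bigl((\bigcup_n I_n)^c\bigr)=0$ is \emph{not} a general fact about recurrent diffusions: the paper obtains it (Lemma~3.11) by computing $\EE(f,f)$ for $f\in C_c^\infty(\mathbb R)$ two ways---once via the interval decomposition, once via $\frac12\mathbf D$---and concluding $\int_F(f')^2\,dx=0$. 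The extension hypothesis is used here essentially, and your sketch does not indicate a mechanism for this step.

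The sufficiency direction has a genuine gap. You correctly note that when an \emph{open} endpoint of some $I_k$ lies near a discontinuity point $c$, the divergence $\tt_k\to\pm\infty$ lets one build a compensate function of small energy (the paper's case~(1) in Lemma~\ref{LM212}). But there is a second scenario your plan does not cover: it may happen that every interval in a neighbourhood of $c$ is \emph{closed}, as in Example~\ref{EXA218}, where the $I_n$ are the closures of the complementary intervals of the Cantor set and each $\tt_n$ is the natural scale. Here no scale blows up, so the cut-off trick is unavailable; the paper instead constructs a Cantor-type monotone function (Remark~\ref{RM212}) that is constant on each nearby $I_k$, hence has zero $\EE$-energy, while its $L^2$-norm is small because its support is short. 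Without this second construction the regularity proof for $(\EE,\FF)$ is incomplete, and your phrase ``approximating \dots\ across the junctions between adjacent intervals'' suggests you have not anticipated the Cantor-like accumulation of closed intervals.
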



\begin{remark}\label{RM24}
{Denote the associated Hunt process of $(\EE,\FF)$ by $X=(X_t)_{t\geq 0}$. Set
 $G:=\bigcup_{n\geq 1}\overset{\circ}{I_n}$ and $F:=G^c$. Note that $G$ is an open set.
We would like to make a few remarks for the theorem above.
\begin{itemize}
\item[(1)] Though the intervals are mutually disjoint, they may have common endpoints. For example, $I_n=(a_n, b_n]$ and $I_m=(a_m,b_m)$ with $b_n=a_m$.
\item[(2)] Let $\Lambda_{pr}:=\{a_n: a_n\in I_n\}$ and $\Lambda_{pl}:=\{b_n: b_n\in I_n\}$. Further set $\Lambda_r:=F\setminus \Lambda_{pl}$ and $\Lambda_l:=F\setminus \Lambda_{pr}$. Note that neither $\Lambda_l$ nor $\Lambda_r$ is necessarily closed. For example, let $K$ be the standard Cantor set in $[0,1]$ and set
\[
	\bigcup_nI_n := K^c \cup (-\infty, 0]\cup [1,\infty).
\]
Then $\Lambda_r=K\setminus \{0\}$ and $\Lambda_l=K\setminus \{1\}$. Neither of them is closed. Nevertheless, $\Lambda_l$ (resp. $\Lambda_r$) is closed from the right (resp. left), i.e. if $x_n\in \Lambda_l$ (resp. $\Lambda_r$) and $x_n\downarrow x$ (resp. $x_n\uparrow x$), then $x\in \Lambda_l$ (resp. $\Lambda_r$). This fact can be proved as follows. Assume that $x_n\in \Lambda_l$ and $x_n\downarrow x$. Clearly $x\notin G$ since $G$ is open. If $x\in \Lambda_{pr}$, then there exists an interval $I_n$ such that $I_n=[x, b_n)$ or $[x, b_n]$ with $x<b_n$. Note that $(x,b_n)\subset G$. This leads to a contradiction with $x_n\downarrow x$ and $x_n\in \Lambda_l$. The sets $\Lambda_{pr}, \Lambda_{pl}, \Lambda_r, \Lambda_l$ are called the classes of right shunt points, left shunt points, right singular points and left singular points respectively in \cite[\S3.4]{IM74}. The open set $G$ is called the class of regular points.
\item[(3)] For each $n$, $I_n$ is an invariant set of $X$ and $X^{I_n}$ is an irreducible and recurrent diffusion process with the scale function $\tt_n$, the speed measure $m|_{I_n}$ and no killing inside (Cf. \cite[Theorem~2.2.11]{CF12}. In other words,
\[
	\mathbf{P}_x(X^{I_n}_t=y, \exists t>0)=1,\quad \forall x,y \in I_n.
\]
Furthermore, if $a_n\in I_n$, then $X^{I_n}$ is reflected at the left endpoint $a_n$. If $a_n\notin I_n$, then $X^{I_n}$ never reach it in finite time (Cf. \cite{I57} and \cite[Example~3.5.7]{CF12}). This also implies that any single point subset $\{x\}\subset I_n$ is not an $m$-polar set relative to $X$.
\item[(4)] The set $\Lambda_l\cap \Lambda_r=\left(\bigcup_{n\geq 1}I_n\right)^c$ is an $m$-polar set relative to $X$. Indeed, $m(\Lambda_r\cap \Lambda_l)=0$, and for any $x\notin \Lambda_r\cap \Lambda_l$, there exists an interval $I_n$ such that $x\in I_n$. Since $I_n$ is an invariant set of $X$, we can conclude
\[
	\mathbf{P}_x(\sigma_{\Lambda_r\cap \Lambda_l}<\infty)=0.
\]
Note that any regular Dirichlet form corresponds to a Hunt process uniquely up to an $m$-polar set. The most convenient way to treat the part of $X$ on $\Lambda_r\cap \Lambda_l$ is to enforce the process $X$ starting from a point $x\in \Lambda_r\cap \Lambda_l$ to stay at $x$ forever.
\item[(5)] The fact that $\left(\bigcup_{n\geq 1} I_n\right)^c$ has Lebesgue measure zero implies that it is nowhere dense.
\end{itemize}}
\end{remark}

\begin{corollary}
An irreducible regular Dirichlet form $(\EE,\FF)$ on $L^2(\mathbb{R})$ is a regular Dirichlet extension of $(\frac{1}{2}\mathbf{D},H^1(\mathbb{R}))$ if and only if there exists a unique scale function $\tt\in \TT_\infty^0(\mathbb{R})$ such that
\[
\begin{aligned}
	&\FF=\left\{f\in L^2(\mathbb{R}): f\ll \tt, \int_{\mathbb{R}}\left(\frac{df}{d\tt}\right)^2d\tt<\infty \right\}, \\
&\EE(f,g)=\frac{1}{2}\int_{\mathbb{R}}\frac{df}{d\tt}\frac{dg}{d\tt}d\tt,\quad f,g\in \FF.
\end{aligned}
\]
\end{corollary}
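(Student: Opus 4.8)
The plan is to read this off directly from Theorem~\ref{THM23}: the point is that for an \emph{irreducible} form the canonical decomposition into invariant intervals must collapse to the single interval $\mathbb{R}$, and then both implications become the degenerate one-interval case of the theorem. So I would treat sufficiency and necessity separately, each time reducing to that special case.

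For the sufficiency direction I would start from a given $\tt\in\TT_\infty^0(\mathbb{R})$ and invoke Theorem~\ref{THM23} with the trivial partition $I_1=\mathbb{R}$ and $\tt_1=\tt$. With one interval, the complement $(\bigcup_n I_n)^c$ is empty (hence of Lebesgue measure zero) and the formulas \eqref{EQ2FFL}--\eqref{EQ2FNU} specialize to exactly the displayed $(\EE,\FF)$; thus $(\EE,\FF)$ is a regular Dirichlet extension of $(\tfrac12\mathbf{D},H^1(\mathbb{R}))$. To check irreducibility I would note that $I_1=\mathbb{R}$ is an invariant set and $X^{I_1}=X$, so by Remark~\ref{RM24}(3) the associated process is an irreducible recurrent diffusion on $\mathbb{R}$, whence $(\EE,\FF)$ is irreducible.

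For the necessity direction, suppose $(\EE,\FF)$ is an irreducible regular extension. Applying Theorem~\ref{THM23} produces the decomposition $\{I_n=\langle a_n,b_n\rangle\}$ with $\tt_n\in\TT_\infty^0(I_n)$ and $(\bigcup_n I_n)^c$ of Lebesgue measure zero. Each $I_n$ is a nondegenerate interval, so $m(I_n)=b_n-a_n>0$, and by Remark~\ref{RM24}(3) each $I_n$ is an invariant set of $X$. If the family contained two distinct intervals $I_1,I_2$, then $I_1$ would be an invariant set with $m(I_1)>0$ and $m(I_1^c)\geq m(I_2)>0$, contradicting irreducibility. Hence there is exactly one interval $I_1$; since $I_1^c$ has Lebesgue measure zero and $I_1$ is an interval, its endpoints must be $\mp\infty$, i.e. $I_1=\mathbb{R}$. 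Setting $\tt:=\tt_1\in\TT_\infty^0(\mathbb{R})$ and reading off \eqref{EQ2FNU} yields the stated form, while uniqueness of $\tt$ is inherited directly from the uniqueness assertion of Theorem~\ref{THM23}.

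The only genuine step, and the one I would be most careful about, is the collapse argument in the necessity direction: I must combine the definition of irreducibility (every invariant set has measure zero or full measure) with the fact that \emph{each} $I_n$ is simultaneously invariant and of strictly positive measure, which is precisely what Remark~\ref{RM24}(3) and the nondegeneracy of the intervals supply. Everything else is a transcription of the single-interval case of Theorem~\ref{THM23}, so no additional analytic work is needed.
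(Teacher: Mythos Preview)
Your argument is correct and is exactly the immediate deduction from Theorem~\ref{THM23} that the paper intends; indeed the paper states this corollary without proof, and your sufficiency/necessity split together with the irreducibility collapse to a single interval $I_1=\mathbb{R}$ is precisely the one-line reading the authors leave to the reader.
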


\subsection{Proof of Theorem~\ref{THM23}}\label{SEC22}

The proof of Theorem~\ref{THM23} will be divided into several parts. We note here that the last assertion about the uniqueness is obvious from Remark~\ref{RM24}~(3). We shall prove necessity first and then sufficiency. To prove the necessity, we need to review one-dimensional or linear diffusions.

\subsubsection{Classification of points for one-dimensional diffusions}\label{SEC221}

In this part, we recall some results on the classification of points for linear diffusion. For those results which may be known to experts but not on standard references \cite[\S5]{I57} and \cite[\S3]{IM74}, we will give a proof.

Let $X=(X_t)$ be a diffusion process on $\mathbb{R}$, i.e. a strong Markov process with continuous paths. Without loss of generality, we always assume that $X$ is conservative, in other words, the lifetime $\zeta$ of $X$ is infinite $\mathbf{P}_x$-a.s. for any $x\in \mathbb{R}$. Now fix a point $x\in \mathbb{R}$. Note that
\[
	e^{\pm}:=\mathbf{P}_x(\sigma_{x\pm}=0)=0\text{ or }1
\]
by Blumenthal's $0$-$1$ law, where $\sigma_{x+}:=\inf\{t>0: X_t>x\},\ \sigma_{x-}:=\inf\{t>0: X_t<x\}$.

\begin{definition} A point $x\in\mathbb{R}$ is called
\begin{itemize}
\item[(1)] regular ($x\in \Lambda_2$), if $e^+=e^-=1$;
\item[(2)] singular ($x\in \Lambda_r\cup \Lambda_l$), if $e^+e^-=0$;
\item[(3)] left singular ($x\in \Lambda_l$), if $e^+=0$; right singular ($x\in \Lambda_r$), if $e^-=0$;
\item[(4)] left shunt ($x\in \Lambda_{pl}$), if $e^+=0, e^-=1$; right shunt ($x\in \Lambda_{pr}$), if $e^-=0, e^+=1$;
\item[(5)] a trap ($x\in \Lambda_t$), if $e^+=e^-=0$.
\end{itemize}
\end{definition}

Clearly, $\Lambda_2=\left(\Lambda_r\cup \Lambda_l\right)^c$, $\Lambda_{pr}\cap \Lambda_{l}=\emptyset$, $\Lambda_{pl}\cap \Lambda_{r}=\emptyset$ and $\Lambda_r\cap \Lambda_l=\Lambda_t$. The following facts will be very useful in proving Theorem~\ref{THM23}.

\begin{lemma}\label{LM25}
\begin{itemize}
\item[(1)] Assume $a<b<c$. Then
\[
\begin{aligned}	&\mathbf{P}_a(\sigma_c<\infty)=\mathbf{P}_a(\sigma_b<\infty)\mathbf{P}_b(\sigma_c<\infty),\\
&\mathbf{P}_c(\sigma_a<\infty)=\mathbf{P}_c(\sigma_b<\infty)\mathbf{P}_b(\sigma_a<\infty).
\end{aligned}\]
\item[(2)] A point $b\in \Lambda_r$ (resp. $b\in \Lambda_l$) if and only if $\mathbf{P}_b(X_t\geq b, \forall t)=1$ (resp.  $\mathbf{P}_b(X_t\leq b, \forall t)=1$). Thus $b\in \Lambda_t$ if and only if $\mathbf{P}_b(X_t= b, \forall t)=1$.
\item[(3)] Fix $b\in \Lambda_r$ (resp. $b\in \Lambda_l$). Then for any $a>b$ (resp. $a<b$),
\[
	\mathbf{P}_a(X_t\geq b, \forall t)=1,\quad (\text{resp. }\mathbf{P}_a(X_t\leq b, \forall t)=1).
\]
\item[(4)] Fix $b\in \Lambda_{pr}$ (resp. $b\in \Lambda_{pl}$). Then there exists a point $a>b$ (resp. $a<b$) such that
\[
\mathbf{P}_b(\sigma_a<\infty)>0.
\]
\item[(5)] The left singular set $\Lambda_l$ is closed from the right, i.e. if $x_n\in \Lambda_l$ and $x_n\downarrow x$, $x\in \Lambda_l$. The right singular set $\Lambda_r$ is closed from the left, i.e. if $x_n\in \Lambda_r$ and $x_n\uparrow x$, $x\in \Lambda_r$.
\item[(6)] The regular set $\Lambda_2$ is open. Thus the singular set $\Lambda_r\cup \Lambda_l$ is closed.
\item[(7)] If each point in an open interval $(a,b)$ is regular, i.e. $(a,b)\subset \Lambda_2$, then for any $x,y\in (a,b)$,
\[
	\mathbf{P}_x(\sigma_y<\infty)\mathbf{P}_y(\sigma_x<\infty)>0.
\]
\end{itemize}
\end{lemma}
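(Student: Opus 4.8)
The plan is to treat the seven assertions in the stated order, since each leans on the earlier ones, and to isolate the two tools that do almost all the work: Blumenthal's $0$-$1$ law (which already gives $e^\pm\in\{0,1\}$) together with the strong Markov property applied at a first hitting time, and the continuity of paths (so that to pass a level the process must first hit it). For (1), on $\{\sigma_c<\infty\}$ the continuous path from $a$ must cross $b$, so $\sigma_b\le\sigma_c<\infty$ and $X_{\sigma_b}=b$; applying the strong Markov property at $\sigma_b$ factorizes $\mathbf{P}_a(\sigma_c<\infty)$ as claimed, and the second identity is the mirror image.

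For (2) the substantive direction is that $e^-=0$ forces $\mathbf{P}_b(X_t\ge b,\ \forall t)=1$. I would argue by contradiction: if $\mathbf{P}_b(\sigma_{b-}<\infty)>0$ then, since $e^-=0$ gives $\sigma_{b-}>0$ a.s., continuity yields $X_{\sigma_{b-}}=b$ on $\{\sigma_{b-}<\infty\}$, while the definition of $\sigma_{b-}$ as an infimum makes the shifted path drop below $b$ immediately; the strong Markov property at $\sigma_{b-}$ then says $e^-=1$ for the restarted process, a contradiction. A by-product I will record is that $\mathbf{P}_b(\sigma_{b-}<\infty)\in\{0,1\}$, so $e^-(y)=1$ is equivalent to the process from $y$ reaching strictly below $y$ at some finite time; this reformulation connects the ``immediate'' condition to ``reaches below'' and is used repeatedly afterwards. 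Assertion (3) is then barrier propagation: from $a>b$ the path can only go below $b$ by first hitting $b$, and after $\sigma_b$ part (2) pins it at $\ge b$, while before $\sigma_b$ it is $>b$. Assertion (4) is a soft remark: $e^+=1$ makes $\sup_tX_t>b$, so if every level $a>b$ were unreachable, taking $a_n\downarrow b$ would force $X_t\le b$ for all $t$, contradicting $e^+=1$.

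Assertion (5) is the first genuinely limiting statement. For $x_n\uparrow x$ with $x_n\in\Lambda_r$, part (3) gives $\mathbf{P}_x(X_t\ge x_n,\ \forall t)=1$ for every $n$ (as $x>x_n$); intersecting these full-probability events and using $x_n\uparrow x$ yields $\mathbf{P}_x(X_t\ge x,\ \forall t)=1$, i.e. $x\in\Lambda_r$ by (2), and the statement for $\Lambda_l$ is symmetric. Note this uses only approach from the correct side and needs no reversibility.

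The main obstacle is (6), equivalently the closedness of $\Lambda_r\cup\Lambda_l$. Reducing to monotone sequences, the same-orientation cases ($x_n\uparrow x\in\Lambda_r$ or $x_n\downarrow x\in\Lambda_l$) are exactly (5); the difficulty is the opposite cases, say $x_n\uparrow x$ with $x_n\in\Lambda_l$, where one-sided closure is silent and, for a merely strong-Markov continuous process, the limit could a priori be regular. The decisive point I expect is the $m$-symmetry of $X$: by (2)–(3) each $x_n\in\Lambda_l$ makes $(-\infty,x_n]$ an invariant set, and for a symmetric process the complement $(x_n,\infty)$ is invariant as well (a standard fact, see \cite{FOT11}); hence the process from $x>x_n$ never drops to $\le x_n$, and letting $x_n\uparrow x$ forces $\mathbf{P}_x(X_t\ge x,\ \forall t)=1$, i.e. $x\in\Lambda_r$. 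Thus symmetry upgrades one-sided barriers into genuine two-sided barriers and closes $\Lambda_r\cup\Lambda_l$. The same mechanism yields mutual reachability, $\mathbf{P}_x(\sigma_y<\infty)>0\iff\mathbf{P}_y(\sigma_x<\infty)>0$, which drives (7): fixing $x\in(a,b)$, I would show $R_x:=\{y\in(a,b):\mathbf{P}_x(\sigma_y<\infty)>0\}$ equals $(a,b)$ by connectedness. It is open because every regular $y$ reaches a whole neighborhood (by (4) it reaches some $c<y<d$, and (1) then gives $\mathbf{P}_y(\sigma_{y'}<\infty)>0$ for all $y'\in(c,d)$) and hitting probabilities compose through (1); it is closed in $(a,b)$ because if $y_n\in R_x$, $y_n\to y$ with $y$ regular, then $y$ reaches a neighborhood containing $y_n$ for large $n$, symmetry gives $\mathbf{P}_{y_n}(\sigma_y<\infty)>0$, and composing with $\mathbf{P}_x(\sigma_{y_n}<\infty)>0$ puts $y\in R_x$. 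Then $R_x=(a,b)$, and symmetry supplies the reverse factor, so the product in (7) is positive. I expect the careful justification that symmetry legitimately converts half-line invariance into a two-sided barrier — and more generally the passage from the qualitative conditions $e^\pm=1$ to quantitative reachability in (6)–(7) — to be where most of the effort lies.
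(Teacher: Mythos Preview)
Your treatment of (1)--(5) is correct and essentially matches the paper's approach; the paper is terser, citing \cite[\S3.3, 10a)]{IM74} for (2) and \cite[\S3.4]{IM74} for (5), while you spell out the arguments.

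The genuine issue is with (6) and (7). You invoke the $m$-symmetry of $X$ as ``the decisive point,'' but in this section of the paper (\S\ref{SEC221}) $X$ is only assumed to be a conservative strong Markov process with continuous paths; symmetry is introduced separately in \S\ref{SEC222}. The lemma is stated and used in full generality, and the paper's proof simply cites It\^o--McKean \cite[\S3.4]{IM74} for both (5) and (6), and for the local two-sided reachability near a regular point that drives (7). The It\^o--McKean argument for openness of $\Lambda_2$ is purely one-dimensional diffusion theory: at a regular point $\xi$ one has $\mathbf{E}_\xi[\sigma_c\wedge\sigma_d]<\infty$ for some $c<\xi<d$, and from this one deduces that every point of $(c,d)$ is regular and that $\mathbf{P}_c(\sigma_d<\infty)\mathbf{P}_d(\sigma_c<\infty)>0$ --- no symmetry needed. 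The paper then proves (7) by a direct inf-argument: assuming $\mathbf{P}_x(\sigma_y<\infty)=0$, set $w:=\inf\{z>x:\mathbf{P}_x(\sigma_z<\infty)=0\}$ and derive a contradiction using the local two-sided reachability at the regular point $w$.

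So your argument, if it goes through, proves a weaker statement than the lemma asserts. There is also a technical wrinkle in your symmetry route for (6): invariance of a set in the sense of symmetric Dirichlet-form theory is an $m$-a.e.\ statement about $T_t$, and passing to the pathwise statement ``$\mathbf{P}_x(X_t>x_n,\ \forall t)=1$ for \emph{every} $x>x_n$'' needs exactly the kind of justification the paper carries out later in Lemma~\ref{LM26}. Similarly, the ``mutual reachability'' $\mathbf{P}_x(\sigma_y<\infty)>0\Leftrightarrow\mathbf{P}_y(\sigma_x<\infty)>0$ that you invoke for (7) is not an immediate consequence of symmetry for single points and would itself require an argument. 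The cleaner path --- and the one the paper takes --- is to use the classical non-symmetric results of \cite[\S3.4]{IM74} directly.
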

\begin{proof}
For the first fact, since in the sense of $\mathbf{P}_a$-a.s., $\sigma_c>\sigma_b$, it follows that $\sigma_c=\sigma_b+\sigma_c\circ \theta_{\sigma_b}$ where $(\theta_t)$ are the shift operators of $X$, i.e. $X_{t+s}=X_t\circ \theta_s$ for any $t,s\geq 0$. By the strong Markovian property of $X$, we have
\[
\begin{aligned}
\mathbf{P}_a(\sigma_c<\infty)&= \mathbf{P}_a(\sigma_b<\infty, \sigma_c\circ \theta_{\sigma_b}<\infty)\\
&=\mathbf{P}_a(\sigma_b<\infty, \mathbf{P}_{X_{\sigma_b}}(\sigma_c<\infty))\\
&=\mathbf{P}_a(\sigma_b<\infty)\mathbf{P}_b(\sigma_c<\infty).
\end{aligned}
\]
Another assertion can be deduced similarly.

For the second fact, we need only to remark that $\mathbf{P}_x(\sigma_{x\pm}=0)=\mathbf{P}_x(\sigma_{x\pm}<\infty)$ for any $x\in \mathbb{R}$ (Cf. \cite[\S3.3, 10a)]{IM74}).

For the third fact, fix $b\in \Lambda_r$ and $a>b$. For any point $y<b$, it follows from (2) that
\[
\mathbf{P}_b(\sigma_y<\infty)=\mathbf{P}_b(X_t=y,\exists t)=0.
\]
Thus from (1) we may deduce that $\mathbf{P}_a(\sigma_y<\infty)=0$ for any $y<b$. Take a sequence $y_n\uparrow b$. Then $\mathbf{P}_a(\sigma_{y_n}<\infty)=0$ implies
\[
	\mathbf{P}_a\left(\bigcup_{t} \{X_t\leq y_n\}\right)=0.
\]
Hence
\[
	0=\mathbf{P}_a\left(\bigcup_n\bigcup_{t} \{X_t\leq y_n\}\right)=\mathbf{P}_a\left(\bigcup_t\{X_t<b\}\right)
	=\mathbf{P}_a\left(\{X_t\geq b, \forall t\}^c\right).
\]
Another assertion is similar.

For the forth fact, fix $b\in \Lambda_{pr}$. Suppose that for any $y>b$, $$\mathbf{P}_b(\sigma_y<\infty)=\mathbf{P}_b\left(\cup_t\{X_t\geq y\}\right)=0.$$ Take a sequence $y_n\downarrow b$ and then
\[
0=\mathbf{P}_b\left(\bigcup_n\bigcup_t\{X_t\geq y_n\}\right)=\mathbf{P}_b\left(\bigcup_t\{X_t>b\}\right).
\]
This implies $\mathbf{P}_b(X_t\leq b,\forall t)=1$ and thus $b\in \Lambda_l$ by (2), which contradicts with $\Lambda_{pr}\cap \Lambda_l=\emptyset$.

The fifth and sixth facts can be found in \cite[\S3.4]{IM74}.

For the final fact, note that for any regular point $\xi$, there exist two points $c,d$ close enough to $\xi$ such that $c<\xi<d$ and $\mathbf{P}_c(\sigma_d<\infty)\mathbf{P}_d(\sigma_c<\infty)>0$ (Cf. \cite[\S3.4]{IM74}). Now fix a regular interval $(a,b)\subset \Lambda_2$ and assume that $x,y\in (a,b)$, $x<y$ and $\mathbf{P}_x(\sigma_y<\infty)=0$. Set
\[
A_x:=\left\{z>x: \mathbf{P}_x(\sigma_z<\infty)=0\right\}.
\]
Clearly, $y\in A_x$. Moreover, if $z\in A_x$ and $z'>z$ then $z'\in A_x$. Let $w:=\inf A_x$. If $w=x$, then $\mathbf{P}_x(\sigma_{x+}<\infty)=0$ and $x\in \Lambda_l$, which contradicts with $x\in \Lambda_2$. If $w>x$, note that $w\in (a,b)$ is a regular point. It follows that there exist two points $w_1, w_2$ with $x<w_1<w<w_2<y$ such that
\[
\mathbf{P}_{w_1}(\sigma_{w_2}<\infty)\mathbf{P}_{w_2}(\sigma_{w_1}<\infty)>0.
\]
Since $w_1\notin A_x$ and $w_2\in A_x$, we have $\mathbf{P}_x(\sigma_{w_1}<\infty)>0$ and $\mathbf{P}_x(\sigma_{w_2}<\infty)=0$. However, from (1) we can deduce that
\[
	\mathbf{P}_x(\sigma_{w_2}<\infty)=\mathbf{P}_x(\sigma_{w_1}<\infty)\mathbf{P}_{w_1}(\sigma_{w_2}<\infty)>0,
\]
which leads to a contradiction. That completes the proof.
\end{proof}

{Intuitively, a left (resp. right) singular point looks like a `wall' to the left (resp. right), and no trajectory can run through it from its left (resp. right) side to the right (resp. left). The left (resp. right) shunt point means more: the trajectories starting from this point must enter its left (resp. right) side in finite time.

We need to point out $X$ admits a left or right shunt interval $(a,b)$, i.e. $(a,b)\subset \Lambda_{pr}$ or $\Lambda_{pl}$. For example, let $X_t=X_0+t$. Then $\Lambda_{pr}=\mathbb{R}$. This example also indicates that for a right shunt point $b$, there may exist another point $a<b$ such that the trajectory starting from $a$ can run through $b$ to its right side. We shall see in the next part that these behaviors are not allowed under the symmetry assumption. }

\subsubsection{Linear diffusion under the symmetry}\label{SEC222}

In this part, we further assume that $X$ is symmetric with respect to a fully supported Radon measure $m$ on $\mathbb{R}$. In other words, the semigroup $(P_t)_{t\geq 0}$ of $X$ satisfies
\begin{equation}\label{EQ2PTF}
	(P_tf,g)_m=(f,P_tg)_m,\quad f,g\in \mathcal{B}_b(\mathbb{R})\cap L^2(\mathbb{R},m), t\geq 0,
\end{equation}
where $(f,g)_m$ and $\mathcal{B}_b(\mathbb{R})$ stand for the inner product of $L^2(\mathbb{R},m)$ and the set of all the bounded Borel measurable functions on $\mathbb{R}$, respectively.

\begin{lemma}\label{LM26}
Fix a right (resp. left) shunt point $b\in \Lambda_{pr}$ (resp. $\Lambda_{pl}$). Under the symmetry, for any $a<b$ (resp. $a>b$), it holds that
\[
	\mathbf{P}_a(\sigma_b<\infty)=0.
\]
\end{lemma}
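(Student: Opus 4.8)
The plan is to exploit the time-symmetry of $X$ through its resolvent, transferring the one-sided ``no return'' behaviour at the shunt point $b$ to all points on its left. I treat the right shunt case $b\in\Lambda_{pr}$; the left shunt case follows by the reflection $x\mapsto -x$. First I record the one-sided information already available: since $\Lambda_{pr}\subset\Lambda_r$, Lemma~\ref{LM25}(2) gives $\mathbf{P}_b(X_t\ge b,\ \forall t)=1$, and Lemma~\ref{LM25}(3) gives $\mathbf{P}_x(X_t\ge b,\ \forall t)=1$ for every $x>b$. Consequently $\mathbf{P}_x(X_t<b)=0$ for all $x\ge b$ and all $t\ge 0$, so that the $\alpha$-resolvent $R_\alpha g(x):=\int_0^\infty e^{-\alpha t}P_tg(x)\,dt$ satisfies $R_\alpha 1_{(-\infty,b)}(x)=0$ for every $x\ge b$.

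The decisive step is to feed this into the symmetry identity. Integrating \eqref{EQ2PTF} in $t$ yields $(R_\alpha f,g)_m=(f,R_\alpha g)_m$ for bounded $f,g\in L^2(\mathbb{R},m)$. Fix $M<b<N$ and apply this with $f=1_{[b,N)}$ and $g=1_{(M,b)}$, both of which lie in $\mathcal{B}_b(\mathbb{R})\cap L^2(\mathbb{R},m)$ because $m$ is Radon. The left-hand side $\int_{[b,N)}R_\alpha 1_{(M,b)}\,dm$ vanishes, since $R_\alpha 1_{(M,b)}\le R_\alpha 1_{(-\infty,b)}=0$ on $[b,\infty)$. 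Hence $\int_{(M,b)}R_\alpha 1_{[b,N)}\,dm=0$, and as the integrand is nonnegative and $m$ is fully supported, $R_\alpha 1_{[b,N)}=0$ $m$-a.e.\ on $(M,b)$. Letting $M\downarrow-\infty$, $N\uparrow\infty$ and using monotone convergence, I get $R_\alpha 1_{[b,\infty)}(x)=0$ for $m$-a.e.\ $x<b$.

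It remains to convert this occupation statement into a hitting statement and to remove the ``a.e.'' Fix $\alpha>0$ and $x<b$ with $R_\alpha 1_{[b,\infty)}(x)=0$; then $\mathbf{P}_x(X_t\ge b)=0$ for Lebesgue-a.e.\ $t>0$. On the other hand, by the strong Markov property at $\sigma_b$ together with $\mathbf{P}_b(X_t\ge b,\ \forall t)=1$, on $\{\sigma_b<\infty\}$ one has $X_t\ge b$ for all $t\ge\sigma_b$, whence $\mathbf{P}_x(X_t\ge b)\ge\mathbf{P}_x(\sigma_b\le t)\uparrow\mathbf{P}_x(\sigma_b<\infty)$ as $t\to\infty$. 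Comparing, $\mathbf{P}_x(\sigma_b<\infty)=0$ for $m$-a.e.\ $x<b$. Finally I upgrade to every $a<b$: given such $a$, choose a point $x\in(a,b)$ with $\mathbf{P}_x(\sigma_b<\infty)=0$ (possible by the a.e.\ statement), and apply Lemma~\ref{LM25}(1) to $a<x<b$, which gives $\mathbf{P}_a(\sigma_b<\infty)=\mathbf{P}_a(\sigma_x<\infty)\mathbf{P}_x(\sigma_b<\infty)=0$.

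I expect the main obstacle to be technical rather than conceptual: justifying the use of the symmetry identity with localized indicator functions and the passage to the limit, and carefully deducing $\mathbf{P}_x(\sigma_b<\infty)=0$ from the vanishing of the resolvent only for $m$-almost every $x$, which is precisely why the final upgrade through Lemma~\ref{LM25}(1) is needed. The conceptual content---that $m$-symmetry forces a point from which one cannot return to the left to be inaccessible from the left---is captured entirely by the single identity $(R_\alpha f,g)_m=(f,R_\alpha g)_m$.
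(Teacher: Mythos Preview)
Your proof is correct and follows essentially the same approach as the paper: use the $m$-symmetry (you via the resolvent, the paper via the semigroup directly) to transfer the one-sided property ``no paths enter $(-\infty,b)$ from $[b,\infty)$'' into ``no paths reach $[b,\infty)$ from $m$-a.e.\ $x<b$,'' and then upgrade from $m$-a.e.\ to every $a<b$ using Lemma~\ref{LM25}(1) and the full support of $m$. Your conversion step via the strong Markov inequality $\mathbf{P}_x(X_t\ge b)\ge\mathbf{P}_x(\sigma_b\le t)$ is slightly more direct than the paper's, which first shows $\mathbf{P}_x(\sigma_y<\infty)=0$ for all $y>b$ and then invokes Lemma~\ref{LM25}(4) to reach $\sigma_b$.
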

\begin{proof}
Fix $b\in \Lambda_{pr}\subset \Lambda_r$. It follows from Lemma~\ref{LM25}~(2, 3) that for any $x\geq b$,
\[
\mathbf{P}_x(X_t\geq b, \forall t)=1.
\]
Take a constant $N$ large enough, and set $f(x):=1_{[-N,b)}(x), g(x):=1_{[b,N]}(x)$. Since for any $x\geq b$,
\[
P_tf(x)=\mathbf{P}_x\left(X_t\in [-N,b)\right)\leq \mathbf{P}_x(X_t<b)=0,
\]
the left side of \eqref{EQ2PTF} equals $0$. Thus for $m$-a.e. $x\in [-N, b)$,
\[
	0=P_tg(x)=\mathbf{P}_x(X_t\in [b, N]).
\]
By letting $N\uparrow \infty$, we obtain that for any fixed $t>0$,
\begin{equation}\label{EQ2PXX}
	\mathbf{P}_x(X_t\geq b)=0,
	\end{equation}
 for $m$-a.e. $x<b$. Thus for $m$-a.e. $x<b$, \eqref{EQ2PXX} holds for  any $t\in \mathbb{Q}\cap (0,\infty)$, where $\mathbb{Q}$ is the set of all rational numbers. Take a point $x<b$ such that \eqref{EQ2PXX} holds for any $t\in \mathbb{Q}\cap (0,\infty)$. We have
 \[
 	\mathbf{P}_x\left(\bigcup_{t\in \mathbb{Q}, t>0} \{X_t\geq b\}\right)=0.
 \]
It follows that
\[
\mathbf{P}_x\left(\bigcap_{t\in \mathbb{Q}, t>0}\{X_t<b\}\right)=\mathbf{P}_x\left(\left\{\bigcup_{t\in \mathbb{Q}, t>0}\{X_t\geq b\}\right\}^c  \right)=1.
\]
Since $X$ is continuous, we may conclude that
\begin{equation}\label{EQ2PXT}
\mathbf{P}_x\left(\bigcap_{t}\{X_t\leq b\}\right)=1.
\end{equation}
As a result,
\begin{equation}\label{EQ2PXY}
	\mathbf{P}_x(\sigma_y<\infty)=0
\end{equation}
 for any $y>b$. Note that $m$ has full support and thus we may take a sequence $x_n \uparrow b$ such that \eqref{EQ2PXY} holds for $x=x_n$. For any $z<x_n$, it follows from Lemma~\ref{LM26}~(1) that $\mathbf{P}_z(\sigma_y<\infty)=0$ for any $y>b$. Hence $\mathbf{P}_z(\sigma_y<\infty)=0$ for any $z<b<y$.
 Therefore, from Lemma~\ref{LM25}~(1) and (4), we assert that $\mathbf{P}_x(\sigma_b<\infty)=0$ for any $x<b$. That completes the proof.
\end{proof}


The following lemma indicates that $X$ is non-decreasing (resp. non-increasing) in the right (resp. left) singular interval. However, if $X$ is symmetric, then any point in a right or left singular interval must be a trap.

\begin{lemma}\label{LM27}
\begin{itemize}
\item[(1)] If an open interval $(a,b)\subset \Lambda_r$ (resp. $\Lambda_l$), then for any $x\in (a,b)$,
\[
	\mathbf{P}_x(X_t\geq X_s, \forall s<t\leq \sigma_b)=1 \quad (\text{resp. }\mathbf{P}_x(X_t\leq X_s, \forall s<t\leq \sigma_a)=1).
\]
\item[(2)] Under the symmetry, if $(a,b)\subset \Lambda_r$ (resp. $\Lambda_l$), then $(a,b)\subset \Lambda_t$.
\end{itemize}
\end{lemma}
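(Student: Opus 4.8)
\emph{Part (1).} I would prove the right-singular case, the left one being symmetric. Fix $x\in(a,b)$ and work with the countable family of rational levels $q\in(x,b)$. The plan is to show that each such level is crossed upward exactly once and never recrossed. At the first hitting time $\sigma_q$, the strong Markov property together with Lemma~\ref{LM25}~(2) (which rephrases $q\in\Lambda_r$ as $\mathbf{P}_q(X_t\geq q,\forall t)=1$) forces the path to remain $\geq q$ on $[\sigma_q,\infty)$; and since $x<q$, continuity forces $X_t<q$ on $[0,\sigma_q)$. Hence, on a set of full probability, $\{t:X_t\geq q\}=[\sigma_q,\infty)$ (empty when $\sigma_q=\infty$) simultaneously for all rational $q\in(x,b)$, and moreover $X_t\geq x$ for all $t$ by Lemma~\ref{LM25}~(2). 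Monotonicity up to $\sigma_b$ then follows by contradiction: if $X_t<X_s$ with $s<t\leq\sigma_b$, then $X_s,X_t\in[x,b)$ (the path stays in this range before $\sigma_b$), so a rational $q\in(X_t,X_s)\subset(x,b)$ exists; from $X_s>q$ we get $s\geq\sigma_q$, whence $t>s\geq\sigma_q$ gives $X_t\geq q$, contradicting $X_t<q$.

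\emph{Part (2).} Since $(a,b)\subset\Lambda_r$, each point of $(a,b)$ is either a trap or a right shunt, so it suffices to show every $x\in(a,b)$ also lies in $\Lambda_l$. The plan is to mirror the symmetry argument of Lemma~\ref{LM26}. For rationals $a<u'<u<v<v'<b$ set $f=1_{(u',u]}$ and $g=1_{(v,v']}$. For any starting point $y\in(v,v']\subset\Lambda_r$, Lemma~\ref{LM25}~(2) gives $X_t\geq y>u$, so $P_tf(y)=\mathbf{P}_y(X_t\in(u',u])=0$; thus the left-hand side $(P_tf,g)_m$ of \eqref{EQ2PTF} vanishes. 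Symmetry then yields $(f,P_tg)_m=0$, i.e. $\int_{(u',u]}\mathbf{P}_x(X_t\in(v,v'])\,m(dx)=0$, so that $\mathbf{P}_x(X_t\in(v,v'])=0$ for $m$-a.e.\ $x\in(u',u]$ because $m$ has full support. Letting $u',u,v,v',t$ range over the rationals, I obtain: for $m$-a.e.\ $x\in(a,b)$, $\mathbf{P}_x(X_t\in(v,v'])=0$ for all rational $v>x$, $v'\in(v,b)$ and $t>0$.

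Next I would convert this into a statement about hitting probabilities. Because $(v,v')$ is open and the paths are continuous, $\{s\geq 0:X_s\in(v,v')\}$ is open, hence nonempty only if it contains a rational time; therefore $\mathbf{P}_x(\sigma_y<\infty)>0$ for some $y\in(x,b)$ would produce a rational $t$ with $\mathbf{P}_x(X_t\in(v,v'))>0$ for suitable $v<y<v'$, contradicting the previous step. Thus $\mathbf{P}_x(\sigma_y<\infty)=0$ for $m$-a.e.\ $x\in(a,b)$ and all $y\in(x,b)$. To upgrade this to \emph{every} point, I would fix arbitrary $x_0\in(a,b)$ and $y_0\in(x_0,b)$, use full support of $m$ to choose a ``good'' $x_1\in(x_0,y_0)$ with $\mathbf{P}_{x_1}(\sigma_{y_0}<\infty)=0$, and apply the multiplicativity in Lemma~\ref{LM25}~(1):
\[
\mathbf{P}_{x_0}(\sigma_{y_0}<\infty)=\mathbf{P}_{x_0}(\sigma_{x_1}<\infty)\,\mathbf{P}_{x_1}(\sigma_{y_0}<\infty)=0.
\]
Consequently no trajectory from $x_0$ ever exceeds $x_0$, i.e.\ $\mathbf{P}_{x_0}(X_t\leq x_0,\forall t)=1$, so $x_0\in\Lambda_l$ by Lemma~\ref{LM25}~(2); combined with $x_0\in\Lambda_r$ this gives $x_0\in\Lambda_t$, and hence $(a,b)\subset\Lambda_t$.

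The main obstacle I anticipate is the symmetry step of Part (2) and the subsequent a.e.-to-everywhere upgrade: the delicate points are verifying that the relevant events have the right measurability to feed into \eqref{EQ2PTF}, and correctly passing from the ``$m$-a.e.'' conclusion to all starting points via full support and the hitting-time identity of Lemma~\ref{LM25}~(1), exactly as in Lemma~\ref{LM26}. Part (1) is comparatively routine once the single-crossing description $\{t:X_t\geq q\}=[\sigma_q,\infty)$ is established.
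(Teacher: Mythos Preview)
Your proposal is correct. For Part~(2) you follow essentially the paper's route: the paper also invokes the symmetry argument (phrased there as ``mimicking the proof of \eqref{EQ2PXT}'') to obtain $\mathbf{P}_x(\sigma_{w+}<\infty)=0$ for $w\in(x,b)$, and then lets $w\downarrow x$; your explicit treatment of the a.e.-to-everywhere upgrade via full support of $m$ and the multiplicativity in Lemma~\ref{LM25}~(1) is exactly what that ``mimicking'' entails but is left implicit in the paper.

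For Part~(1) you take a genuinely different, though equally short, path. The paper applies the simple Markov property at a fixed time $s$: on $\{s<\sigma_b\}$ one has $X_s\in(a,b)\subset\Lambda_r$, so $\mathbf{P}_{X_s}(X_{t-s}<X_0)=0$, giving $\mathbf{P}_x(X_t<X_s,\ s<t\leq\sigma_b)=0$ directly; it then intersects over rational pairs $s<t$ and uses continuity. You instead argue via single crossings of rational levels $q\in(x,b)$, using the strong Markov property at $\sigma_q$ to show $\{t:X_t\geq q\}=[\sigma_q,\infty)$, and deduce monotonicity by a level-set contradiction. The paper's argument is marginally more direct (one Markov-property computation versus your two-step level description plus contradiction), while yours gives a more geometric picture of why the path is nondecreasing. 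One small point to tidy in your write-up: when you assert $X_s,X_t\in[x,b)$ for $s<t\leq\sigma_b$, the case $t=\sigma_b$ has $X_t=b$, but then $X_t=b>X_s$ automatically, so the contradiction step only needs to handle $t<\sigma_b$.
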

\begin{proof}
We first prove (1) and only consider the case $(a,b)\subset \Lambda_r$. Since any point $x\in (a,b)$ is right singular, we have
\[
\mathbf{P}_x(X_t\geq X_0,\forall t)=1
\]
 by Lemma~\ref{LM25}~(2). From the Markovian property of $X$, we can deduce that for fixed $s<t$,
\[
\begin{aligned}
\mathbf{P}_x(X_t<X_s, s<t\leq \sigma_b) &=\mathbf{P}_x\left(\{X_{t-s}<X_0\}\circ \theta_s, t-s \leq  \sigma_b\circ \theta_s, s<\sigma_b\right) \\
&=\mathbf{P}_x\left(\mathbf{P}_{X_s}(X_{t-s}<X_0, t-s\leq \sigma_b); s<\sigma_b \right)\\
&=0.
\end{aligned}
\]
The last equality above follows from the fact that, $\mathbf{P}_x$-a.s. on $\{s<\sigma_b\}$, $X_s\in (a,b)$. It is then clear that
\[
	\mathbf{P}_x\left(\bigcup_{s<t\leq \sigma_b, s,t\in \mathbb{Q}}\{X_t<X_s\} \right)=0.
\]
Thus
\[
	1=\mathbf{P}_x\left(\bigcap_{s<t\leq \sigma_b, s,t\in \mathbb{Q}}\{X_t\geq X_s\} \right)=\mathbf{P}_x\left(\bigcap_{s<t\leq \sigma_b}\{X_t\geq X_s\} \right).
\]

For the second assertion (2), fix $x\in (a,b)\subset \Lambda_r$. Take another point $w$ in $(a,b)$ such that $x<w$. Mimicking the proof of \eqref{EQ2PXT}, we deduce that
\[
	\mathbf{P}_x(\sigma_{w+}<\infty) =0.
\]
Take a sequence $w_n\downarrow x$ and we then have
\[
0=\mathbf{P}_x\left(\bigcup_n\bigcup_{t}\{X_t> w_n\}\right)=\mathbf{P}_x\left(\bigcup_{t}\{X_t> x\}\right).
\]
Thus
\[
	\mathbf{P}_x(X_t\leq x, \forall t)=1.
\]
It follows from Lemma~\ref{LM25}~(2) that $x\in \Lambda_l$ and then $x\in \Lambda_r\cap \Lambda_l=\Lambda_t$. It concludes that $(a,b)\subset \Lambda_t$.
\end{proof}

\subsubsection{A merging theorem}\label{SEC223}

Before proving Theorem~\ref{THM23}, we need a result to merge a sequence of Dirichlet forms into a new one.
Because it holds in general and may have independent interest, we state it as a theorem.
\begin{theorem}\label{LM28}
Let $E:=\cup_{n\geq 1}E_n$ with $\{E_n:n\geq 1\}$ disjoint be a measurable space and $m$ a $\sigma$-finite measure on it. Denote the restriction of $m$ to $E_n$ by $m_n$. Assume that $(\EE^n,\FF^n)$ is a Dirichlet form on $L^2(E_n,m_n)$. Then
\begin{equation}\label{EQ2FFLE}
\begin{aligned}
	&\FF:=\{f\in L^2(E,m): f|_{E_n}\in \FF^n, \sum_{n\geq 1}\EE^n(f|_{E_n},f_{E_n})<\infty\}, \\
	&\EE(f,g):= \sum_{n\geq 1}\EE^n(f|_{E_n}, f|_{E_n}),\quad f,g\in \FF
\end{aligned}
\end{equation}
is a Dirichlet form on $L^2(E,m)$.
\end{theorem}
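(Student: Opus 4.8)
The plan is to verify the four defining properties of a Dirichlet form—symmetry and nonnegativity, density of $\FF$ in $L^2(E,m)$, the Markovian (unit contraction) property, and closedness—by exploiting the orthogonal decomposition $L^2(E,m)=\bigoplus_{n\geq 1}L^2(E_n,m_n)$ induced by the partition $E=\bigcup_n E_n$. Under this decomposition every $f\in L^2(E,m)$ is identified with the family $(f|_{E_n})_n$ and $\|f\|_{L^2(E,m)}^2=\sum_n\|f|_{E_n}\|_{L^2(E_n,m_n)}^2$; consequently the $1$-form factorizes as $\EE_1(f,f)=\sum_n\EE^n_1(f|_{E_n},f|_{E_n})$, where $\EE^n_1(\cdot):=\EE^n(\cdot)+\|\cdot\|_{L^2(E_n,m_n)}^2$. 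In effect $(\FF,\EE_1)$ is exactly the Hilbert-space direct sum $\bigoplus_n(\FF^n,\EE^n_1)$, and most of the verification reduces to termwise statements together with control of the tail of the series.

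Symmetry, bilinearity, and nonnegativity of $\EE$ are immediate from the corresponding properties of each $\EE^n$; the only point to record is that $\EE(f,g)$ is finite and absolutely convergent for $f,g\in\FF$, which follows from the termwise Cauchy--Schwarz inequality $|\EE^n(f|_{E_n},g|_{E_n})|\le \EE^n(f|_{E_n})^{1/2}\EE^n(g|_{E_n})^{1/2}$ and one further application of Cauchy--Schwarz to the resulting series. For density, given $f\in L^2(E,m)$ and $\epsilon>0$ I would first choose $N$ with $\sum_{n>N}\|f|_{E_n}\|^2<\epsilon/2$, then approximate each of the finitely many components $f|_{E_n}$, $n\le N$, within $\FF^n$ using the density of $\FF^n$ in $L^2(E_n,m_n)$, and set the approximant equal to $0$ on $E_n$ for $n>N$; the result lies in $\FF$ because only finitely many components are nonzero. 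The Markovian property is handled componentwise: the unit contraction $v=(0\vee u)\wedge 1$ satisfies $v|_{E_n}=(0\vee u|_{E_n})\wedge 1$, since $\vee$ and $\wedge$ act pointwise and commute with restriction, so $v|_{E_n}\in\FF^n$ and $\EE^n(v|_{E_n})\le\EE^n(u|_{E_n})$ for each $n$ by the Markovian property of $(\EE^n,\FF^n)$; summing gives $v\in\FF$ with $\EE(v,v)\le\EE(u,u)$.

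The main work is closedness, which amounts to completeness of the direct sum. Starting from an $\EE_1$-Cauchy sequence $(f_k)\subset\FF$, the inequality $\EE^n_1((f_k-f_l)|_{E_n})\le\EE_1(f_k-f_l)$ shows that for each fixed $n$ the sequence $(f_k|_{E_n})_k$ is Cauchy in $(\FF^n,\EE^n_1)$, hence convergent to some $g_n\in\FF^n$ because each component form is closed; I then define $g$ by $g|_{E_n}=g_n$. The two facts to establish are that $g\in\FF$ and that $f_k\to g$ in $\EE_1$, and both are obtained by passing to the limit in the finite truncations $\sum_{n\le N}(\cdot)$ before letting $N\to\infty$. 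Concretely, boundedness of the Cauchy sequence gives $\sum_{n\le N}\EE^n_1(g_n)=\lim_k\sum_{n\le N}\EE^n_1(f_k|_{E_n})\le\sup_k\EE_1(f_k)<\infty$ uniformly in $N$, whence $g\in\FF$; and fixing $k$ large and letting $l\to\infty$ in $\sum_{n\le N}\EE^n_1((f_k-f_l)|_{E_n})<\epsilon$ yields $\sum_{n\le N}\EE^n_1((f_k-g)|_{E_n})\le\epsilon$ for every $N$, hence $\EE_1(f_k-g)\le\epsilon$. The delicate point, and the one I expect to require care, is precisely this interchange of the limit in $k$ (or $l$) with the infinite summation over $n$: the truncate-then-take-limits order, justified by the monotonicity of the partial sums in $N$, is what rules out the spurious possibility that energy escapes into infinitely many components in the limit.
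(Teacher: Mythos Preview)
Your proof is correct, but it proceeds along a genuinely different route from the paper's. The paper does not verify closedness directly; instead it constructs the semigroup first, defining $(T_tf)|_{E_n}:=T^n_t(f|_{E_n})$ from the component semigroups $(T^n_t)$, and checks that $(T_t)$ is a strongly continuous symmetric contraction semigroup on $L^2(E,m)$. Closedness then comes for free, since every such semigroup determines a closed form via $\EE'(f,f)=\lim_{t\downarrow 0}t^{-1}(f-T_tf,f)_m$, and the identification $(\EE',\FF')=(\EE,\FF)$ follows by interchanging the increasing limit $t\downarrow 0$ with the sum over $n$. Your argument is more elementary and self-contained: you recognize $(\FF,\EE_1)$ as the Hilbert-space direct sum $\bigoplus_n(\FF^n,\EE^n_1)$ and prove completeness by the standard truncate-then-pass-to-the-limit maneuver, without invoking semigroups at all. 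The paper's approach has the advantage that it identifies the semigroup of $(\EE,\FF)$ explicitly, which is exactly what is needed immediately afterwards in Corollary~\ref{COR210} to recognize a given Dirichlet form with invariant sets $\{E_n\}$ as a merged form; your approach would require a separate (though easy) argument to recover that corollary.
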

\begin{proof}
Let $(T^n_t)$ be the semigroup of $(\EE^n,\FF^n)$ on $L^2(E_n,m_n)$. For any $f\in L^2(E,m), t\geq 0$, define
\begin{equation}\label{EQ2TTF}
(T_tf)|_{E_n}(x):= \sum_{n\geq 1}T^n_t(f|_{E_n})(x), \quad n\geq 1.
\end{equation}
 Set $f^n:=f|_{E_n}$ for convenience. We assert that $(T_t)$ is a strongly continuous and symmetric contraction semigroup on $L^2(E,m)$. The semigroup property is clear from those of $\{(T^n_t):n\geq 1\}$. For the contraction property, fix $f\in L^2(E,m)$. The $L^2$-norm of $L^2(E_n,m_n), L^2(E,m)$ are denoted by $\|\cdot\|_{E_n}, \|\cdot\|_{E}$ for short. Note that $\|f\|_E^2=\sum_{n\geq 1}\|f^n\|_{E_n}^2$. Then we have
\[
	\|T_tf\|_{E}=\sum_{n\geq 1} \|T_t^nf^n\|^2_{E_n}\leq \sum_{n\geq 1}\|f^n\|^2_{E_n}= \|f\|_E^2.
\]
To prove strong continuity, we fix $f\in L^2(E,m)$ and $\epsilon>0$, and take an integer $n$ large enough such that $\sum_{k>n}\|f^k\|_{E_k}^2<\epsilon/4$. By the strong continuity of $\{(T^k_t): 1\leq k\leq n\}$, we may take $t_\epsilon>0$ such that for any $t<t_\epsilon$,
\[
	\|T^k_tf^k-f^k\|^2_{E_k}<\frac{\epsilon}{2^k},\quad 1\leq k\leq n.
\]
Then we have for any $t<t_\epsilon$,
\[
\begin{aligned}
\|T_tf-f\|^2_E&=\sum_{1\leq k\leq n}\|T^k_tf^k-f^k\|^2_{E_k}+\sum_{ k> n}\|T^k_tf^k-f^k\|^2_{E_k} \\
&< \sum_{1\leq k\leq n}\frac{\epsilon}{2^k}+ 4\sum_{k>n}\|f^k\|_{E_k}^2\\
&<2\epsilon.
\end{aligned}\]
Therefore, $(T_t)$ corresponds uniquely to a closed form $(\EE',\FF')$ on $L^2(E,m)$. Precisely,
\[
\begin{aligned}
	&\FF'=\left\{f\in L^2(E,m): \uparrow \lim_{t\downarrow 0} \frac{1}{t}(f-T_tf,f)_m<\infty\right\}, \\
	&\EE'(f,f)=\lim_{t\downarrow 0} \frac{1}{t}(f-T_tf,f)_m,\quad f\in \FF'.
	\end{aligned}
\]
Note that the limit above is an increasing limit as $t\downarrow 0$. On the other hand,
\[
\lim_{t\downarrow 0}\frac{1}{t}(f-T_tf,f)_m=\lim_{t\downarrow 0}\sum_{n\geq 1} \frac{1}{t}(f^n-T^n_tf^n,f^n)_{m_n}=\sum_{n\geq 1}\lim_{t\downarrow 0} \frac{1}{t}(f^n-T^n_tf^n,f^n)_{m_n}.
\]
Thus $f\in \FF'$ if and only if $f^n\in \FF^n$ and $\sum_{n\geq 1} \EE^n(f^n,f^n)<\infty$. In other words,
\[
	(\EE',\FF')=(\EE,\FF).
\]
The Markovian property of $(\EE,\FF)$ may be deduced as follows. Let $\varphi$ be a normal contraction on $\mathbb{R}$ and $f\in \FF$. Note that $(\varphi\circ f)|_{E_n}=\varphi(f|_{E_n})\in \FF^n$ and $\EE^n(\varphi(f|_{E_n}), \varphi(f|_{E_n}))\leq \EE^n(f|_{E_n},f|_{E_n})$ since $(\EE^n,\FF^n)$ satisfies the Markovian property. Hence $\varphi\circ f\in \FF$ and $\EE(\varphi\circ f, \varphi\circ f)\leq \EE(f,f)$. That completes the proof.
\end{proof}

Note that the semigroup of $(\EE,\FF)$ in Theorem~\ref{LM28} is characterized by \eqref{EQ2TTF}. From this fact, we have the following corollary.

\begin{corollary}\label{COR210}
Let $(\EE,\FF)$ be a Dirichlet form on $L^2(E,m)$ associated with a symmetric Markov process $X$. Suppose that $\{E_n:n\geq 1\}$ is a sequence of disjoint invariant sets of $X$ and
\[
	E=\bigcup_{n\geq 1}E_n \quad m\text{-a.e.}
\]
Denote $(\EE^n,\FF^n):=(\EE^{E_n}, \FF^{E_n})$. Then $(\EE,\FF)$ can be expressed as \eqref{EQ2FFLE}.
\end{corollary}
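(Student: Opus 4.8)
The plan is to reduce the corollary to the merging theorem (Theorem~\ref{LM28}) by identifying the semigroup of $X$ with the one constructed there. Recall that a Dirichlet form on $L^2(E,m)$ is in one-to-one correspondence with its strongly continuous symmetric contraction semigroup; hence it suffices to show that the semigroup $(T_t)$ of $(\EE,\FF)$ coincides with the semigroup characterized by \eqref{EQ2TTF} associated to the family $\{(\EE^n,\FF^n)\}=\{(\EE^{E_n},\FF^{E_n})\}$.

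First I would recall, as set up in \S\ref{SEC21}, that since each $E_n$ is an invariant set of $X$, the restriction $X^{E_n}$ is a symmetric Hunt process on $L^2(E_n,m_n)$ whose associated Dirichlet form is exactly $(\EE^{E_n},\FF^{E_n})=(\EE^n,\FF^n)$; denote its transition semigroup by $(T^n_t)$. This is precisely the data that the merging theorem consumes.

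The key step is to decompose $(T_t)$ along the partition using invariance. Since $E=\bigcup_{n\geq 1}E_n$ holds $m$-a.e.\ and each $E_n$ is invariant, for any $f\in L^2(E,m)$ and $m$-a.e.\ $x\in E_n$ we have
\[
T_tf(x)=\mathbf{E}_x[f(X_t)]=\mathbf{E}_x[f(X_t);X_t\in E_n]=\mathbf{E}_x\big[(f|_{E_n})(X^{E_n}_t)\big]=T^n_t(f|_{E_n})(x),
\]
where the second equality uses $\mathbf{P}_x(X_t\in E_n,\forall t)=1$ for $x\in E_n$, and the last uses that $X^{E_n}$ is the restriction of $X$ to $E_n$ with semigroup $(T^n_t)$. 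Thus $(T_tf)|_{E_n}=T^n_t(f|_{E_n})$ for every $n$, which is exactly formula \eqref{EQ2TTF}.

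Finally, by Theorem~\ref{LM28} the Dirichlet form defined by \eqref{EQ2FFLE} out of $\{(\EE^n,\FF^n)\}$ is the form whose semigroup is \eqref{EQ2TTF}. Combined with the previous step, that semigroup is $(T_t)$, and by the uniqueness of the form--semigroup correspondence we conclude that $(\EE,\FF)$ equals the form \eqref{EQ2FFLE}, as claimed. The only delicate point—and the main thing to watch—is the bookkeeping of $m$-null sets: the defining identity for invariant sets is pointwise for $x\in E_n$, so the displayed chain is an $m$-a.e.\ identity on each $E_n$, while the exceptional set on which $E\neq\bigcup_n E_n$ is $m$-null and hence irrelevant at the level of $L^2$. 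I expect this routine measure-theoretic tidying, rather than any substantive analytic difficulty, to be the only obstacle.
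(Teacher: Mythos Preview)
Your proposal is correct and follows exactly the approach the paper indicates: the sentence preceding the corollary notes that the semigroup of the merged form in Theorem~\ref{LM28} is characterized by \eqref{EQ2TTF}, and the corollary is deduced ``from this fact,'' i.e., by observing that invariance forces $(T_tf)|_{E_n}=T^n_t(f|_{E_n})$ and then invoking the form--semigroup correspondence. Your write-up simply makes this one-line remark explicit.
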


\subsubsection{Proof of necessity}\label{SEC224}

In this part, we prove the necessity of Theorem~\ref{THM23}. Note that $m$ stands for the Lebesgue measure on $\mathbb{R}$ in this part.
Let $(\EE,\FF)$ be a regular Dirichlet extension of $(\frac{1}{2}\mathbf{D}, H^1(\mathbb{R}))$ on $L^2(\mathbb{R})$ associated with a Hunt process $X$. It follows from Theorem~\ref{THM21} and Proposition~\ref{PRO23}~(2) that $(\EE,\FF)$ is strongly local and recurrent. Without loss of generality, by \cite[Theorem~4.5.1~(3)]{FOT11}, we may assume that $X$ is a recurrent (hence conservative, see \cite[Lemma~1.6.5]{FOT11}) diffusion process on $\mathbb{R}$.

We use the same notations as \S\ref{SEC221} to denote the classes of points for $X$. Let
\[
	G:=\Lambda_2
\]
be the set of regular points and which is open by Lemma~\ref{LM25}~(6). Thus $G$ may be written as a union of countable disjoint open intervals:
\begin{equation}\label{EQ2GNA}
	G=\bigcup_{n\geq 1}(a_n,b_n).
\end{equation}
We assert $F:=G^c$ is nowhere dense, and the shunt point must be an endpoint of some interval in \eqref{EQ2GNA}.

\begin{lemma}\label{LM29}
The singular set $F=\Lambda_r\cup \Lambda_l$ is nowhere dense. Furthermore, $F\setminus \{a_n,b_n: n\geq 1\}\subset \Lambda_t$.
\end{lemma}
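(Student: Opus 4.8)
The plan is to establish the two assertions separately, drawing on the point-classification facts of Lemma~\ref{LM25}, the symmetry inputs of Lemmas~\ref{LM26} and~\ref{LM27}, and the extension hypothesis $H^1(\mathbb{R})\subset\FF$ with $\EE=\frac12\mathbf{D}$ on $H^1(\mathbb{R})$. Since $\Lambda_2$ is open (Lemma~\ref{LM25}(6)), $F=\Lambda_2^c$ is closed, so for nowhere density it suffices to show $F$ contains no open interval.

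For the nowhere-density claim, suppose toward a contradiction that $(a,b)\subset F=\Lambda_r\cup\Lambda_l$. The first step is to extract an open subinterval that is \emph{purely} of one singular type. Pick $c\in(a,b)$: if $c\notin\Lambda_l$, then since $\Lambda_l$ is closed from the right (Lemma~\ref{LM25}(5)) some interval $(c,c+\epsilon)\subset(a,b)$ misses $\Lambda_l$ and hence lies in $\Lambda_r$; symmetrically if $c\notin\Lambda_r$; and if every point of $(a,b)$ lies in $\Lambda_r\cap\Lambda_l=\Lambda_t$ we already have a trap interval. In the first two cases Lemma~\ref{LM27}(2) upgrades the pure singular interval to a trap interval, so in all cases some open $(c,d)\subset\Lambda_t$. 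I would then exclude trap intervals via the extension property: for a nonconstant $f\in C_c^\infty((c,d))\subset H^1(\mathbb{R})\subset\FF$, every $y\in(c,d)$ is a trap, so $P_tf(y)=f(y)$ there (Lemma~\ref{LM25}(2)); since $T_tf=P_tf$ $m$-a.e.\ and $f$ vanishes off $(c,d)$, we get $(f-T_tf,f)_m=0$ and hence $\EE(f,f)=0$, contradicting $\EE(f,f)=\frac12\int(f')^2>0$.

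For the second assertion, let $x\in F$ not be an endpoint. Then $x\in\Lambda_r\cup\Lambda_l$, and if $x\notin\Lambda_t$ then $x\in\Lambda_{pr}$ or $x\in\Lambda_{pl}$; consider $x\in\Lambda_{pr}$. By Lemma~\ref{LM25}(4) there is $a>x$ with $\mathbf{P}_x(\sigma_a<\infty)>0$, and the goal is to show every $y\in(x,a)$ is regular. By path-continuity any trajectory from $x$ reaching $a$ crosses $y$, so $\sigma_y\le\sigma_a$ on $\{\sigma_a<\infty\}$; combined with Lemma~\ref{LM25}(1) this makes both $\mathbf{P}_x(\sigma_y<\infty)>0$ and $\mathbf{P}_y(\sigma_a<\infty)>0$. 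The second forces the trajectory from $y$ above $y$, so $e^+(y)=1$ and $y\notin\Lambda_l$; the first together with Lemma~\ref{LM26} forces $y\notin\Lambda_{pr}$. Since $\Lambda_r=\Lambda_{pr}\cup\Lambda_t$ and $\Lambda_t\subset\Lambda_l$, these give $y\notin\Lambda_r$, whence $y\in\Lambda_2$. Thus $(x,a)\subset G$ and $x=a_n$ for some $n$, contradicting that $x$ is not an endpoint; the case $x\in\Lambda_{pl}$ is symmetric and yields $x=b_n$. Therefore $x\in\Lambda_t$.

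The main obstacle is the reaching argument of the second assertion: one must squeeze out of the single positive hitting probability $\mathbf{P}_x(\sigma_a<\infty)>0$, via Lemma~\ref{LM25}(1) and path-continuity, the two simultaneous facts $e^+(y)=1$ and $y\notin\Lambda_{pr}$ for \emph{every} intermediate $y$, so that the symmetry input of Lemma~\ref{LM26} eliminates all non-trap singular types at once and pins $x$ as an endpoint. By comparison the nowhere-density claim is routine once one notices that one-sided closedness of $\Lambda_r,\Lambda_l$ together with Lemma~\ref{LM27}(2) always produces a trap interval, which the extension hypothesis immediately forbids.
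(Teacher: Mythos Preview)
Your proof is correct. Both parts work, with some genuine differences from the paper's argument worth noting.

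For the first assertion, the paper proceeds in two steps: it first shows that $\Lambda_r$ and $\Lambda_l$ each have empty interior (via Lemma~\ref{LM27}(2) and a contradiction using part Dirichlet forms on the putative interval), and then, assuming $(a,b)\subset F$, uses the emptiness of the interior of $\Lambda_r$ together with the right-closedness of $\Lambda_l$ to rule out shunt points in $(a,b)$, forcing $(a,b)\subset\Lambda_t$ and reaching the same contradiction. Your single-step extraction of a pure one-sided singular subinterval from one-sided closedness is a mild streamlining of the same idea; your semigroup computation $(f-T_tf,f)_m=0$ for $f\in C_c^\infty$ supported in a trap interval is equivalent to the paper's part-Dirichlet-form argument.

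For the second assertion the approaches genuinely diverge. The paper \emph{uses} the nowhere-density just established: since $x\in F$ is not an endpoint, there is a sequence of components of $G$ with endpoints $a_{n_k},b_{n_k}\downarrow x$; right-closedness of $\Lambda_l$ and $x\notin\Lambda_l$ force these endpoints into $\Lambda_{pr}$ for large $k$, and then Lemma~\ref{LM26} (applied at $a_{n_k}$) contradicts the positive hitting probability $\mathbf{P}_x(\sigma_y<\infty)>0$ from Lemma~\ref{LM25}(4). Your argument is independent of part~1: from the single datum $\mathbf{P}_x(\sigma_a<\infty)>0$ you show directly that every intermediate $y\in(x,a)$ satisfies $e^+(y)=1$ (hence $y\notin\Lambda_l$, in particular $y\notin\Lambda_t$) and, by Lemma~\ref{LM26}, $y\notin\Lambda_{pr}$; thus $y\in\Lambda_2$. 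This gives $(x,a)\subset G$, so $(x,a)$ lies in a single component $(a_n,b_n)$ and $x=a_n$. Your route is more self-contained and avoids reusing part~1; the paper's route is shorter once part~1 is in hand but needs the extra observation that the nearby endpoints are themselves in $\Lambda_{pr}$.
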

\begin{proof}
We first prove $\Lambda_r$ has empty interior. Assume that $(a,b)\subset \Lambda_r$, it follows from Lemma~\ref{LM27} that $(a,b)\subset \Lambda_t$. The part Dirichlet forms of $(\frac{1}{2}\mathbf{D}, H^1(\mathbb{R}))$ and $(\EE,\FF)$ on $(a,b)$ are denoted by $(\frac{1}{2}\mathbf{D}_{(a,b)}, H^1_0((a,b)))$ and $(\EE_{(a,b)},\FF_{(a,b)})$. Clearly,  $(\frac{1}{2}\mathbf{D}_{(a,b)}, H^1_0((a,b)))$ is still a regular Dirichlet subspace of $(\EE_{(a,b)},\FF_{(a,b)})$ by Proposition~\ref{PRO25}. However, since $X$ stays at the starting point in $(a,b)$ forever (Cf. Lemma~\ref{LM25}~(2)), it follows that $\EE_{(a,b)}(f,f)=0$ for any $f\in C_c^\infty((a,b))\subset H^1_0((a,b))\subset \FF_{(a,b)}$, whereas
\[
	\frac{1}{2}\mathbf{D}_{(a,b)}(f,f)=\frac{1}{2}\int_a^b f'(x)^2dx.
\]
This leads to a contradiction. Thus $\Lambda_r$ has empty interior. Similarly, $\Lambda_l$ also has empty interior.

Suppose that $(a,b)\subset F=\Lambda_r\cup \Lambda_l$. We also assert that $(a,b)\subset \Lambda_t$, which leads to the same contradiction. In fact, it is enough to check that $(a,b)\cap \Lambda_{pr}=\emptyset$. Suppose that $x\in \Lambda_{pr} \cap (a,b)$. Since $\Lambda_r$ has empty interior, we have for any $n$ large enough, $(x, x+1/n)$ must contain a point in $\Lambda_{pl}$. Then we can take a sequence $x_n \downarrow x$ in $\Lambda_{pl}$. By Lemma~\ref{LM25}~(5), $x\in \Lambda_l$, which contradicts to $x\in \Lambda_{pr}$. Therefore, any point in $(a,b)$ must be a trap.

For the second assertion, fix any point $x\in F\setminus \{a_n,b_n:n\ge 1\}$. Suppose that $x\in \Lambda_{pr}$. Since $F$ is nowhere dense and $x$ is not an endpoint of some $(a_n,b_n)$, there exists a subsequence of intervals $(a_{n_k},b_{n_k})$ in \eqref{EQ2GNA} such that $a_{n_k}, b_{n_k}\downarrow x$ as $k\uparrow \infty$. Note that the left singular set $\Lambda_l$ is closed from the right and $x\in \Lambda_{pr}$. Hence for $k$ large enough, $a_{n_k}, b_{n_k}\in \Lambda_{pr}$. By Lemma~\ref{LM25}~(4), there exists a point $y>x$ such that $
	\mathbf{P}_x(\sigma_y<\infty)>0$.
Take $k$ large enough with $x<a_{n_k}<y$ and $a_{n_k}\in \Lambda_{pr}$. Particularly,
\[
	\mathbf{P}_x(\sigma_{a_{n_k}}<\infty) \geq \mathbf{P}_x(\sigma_y<\infty)>0.
\]
However, Lemma~\ref{LM26} implies $\mathbf{P}_x(\sigma_{a_{n_k}}<\infty)=0$ since $a_{n_k}\in \Lambda_{pr}$. This leads to a contradiction, and we conclude that $x\not\in \Lambda_{pr}$. The same reasoning shows $x\not\in\Lambda_{pl}$.
Hence $x\in \Lambda_t$. That completes the proof.
\end{proof}

Now we deal with $X$ on an interval $(a_n,b_n)$ of \eqref{EQ2GNA} with its endpoints. For convenience, we get rid of the subscript $n$ and write $(a_n,b_n)$ as $(a,b)$. Since $(a,b)$ is a regular interval, it follows from Lemma~\ref{LM25}~(7) that
\[
	\mathbf{P}_x(\sigma_y<\infty)\mathbf{P}_y(\sigma_x<\infty)>0,\quad \forall x,y\in (a,b).
\]
Thus $\mathbf{P}_x(\sigma_b<\infty)=0$ (resp. $\mathbf{P}_x(\sigma_a<\infty)=0$) for some $x\in (a,b)$ if and only if it holds for any $x\in (a,b)$.

Consider the right endpoint $b$. If $b=\infty$, take the part process of $X$ on $(a,\infty)$. It is an irreducible minimal diffusion process on $(a,\infty)$ (Cf. \cite[Example~3.5.7]{CF12}). Denote its scale function by $\tt$. The Brownian motion on $(a,\infty)$ ($a$ is the absorbing boundary) is its regular Dirichlet subspace. Thus from \cite[Theorem~4.1]{FHY10}, we know that $\tt(\infty)=\infty$. Particularly, $\infty$ is not approachable and $\mathbf{P}_x(X_t<\infty, \forall t)=1$ for any $x\in (a,\infty)$. Hereafter assume $b<\infty$. It has the following cases.
\begin{itemize}
\item[(1)] $b\in \Lambda_{pr}$.
By Lemma~\ref{LM26}, for any $x<b$, $\mathbf{P}_x(\sigma_b<\infty)=0$.
\item[(2)] $b\in \Lambda_t$. We claim that for $x<b$, $\mathbf{P}_x(\sigma_b<\infty)=0$. If, for some (equivalently, all) $x\in (a,b)$, $\mathbf{P}_x(\sigma_b<\infty)>0$, consider the part Dirichlet form $(\EE_{(a,\infty)},\FF_{(a,\infty)})$ of $(\EE,\FF)$ on $(a,\infty)$. Its associated minimal diffusion process is denoted by $X_{(a,\infty)}$. Note that $(\frac{1}{2}\mathbf{D}_{(a,\infty)}, H^1_0((a,\infty)))$ is its regular Dirichlet subspace. Clearly, $(a,b]$ is an invariant set of $X_{(a,\infty)}$ and $X_{(a,\infty)}^{(a,b]}$ corresponds to the Dirichlet form on $L^2((a,b])$:
\[
\begin{aligned}
	&\FF^{(a,b]}_{(a,\infty)}=\{f|_{(a,b]}: f\in \FF_{(a,\infty)}\},  \\
	&\EE^{(a,b]}_{(a,\infty)}(f|_{(a,b]},f|_{(a,b]})=\EE_{(a,\infty)}(f1_{(a,b]},f1_{(a,b]}),\quad f\in \FF_{(a,\infty)}.
\end{aligned}\]
By \cite[Theorem~3.5.8]{CF12}, we know that for any $g\in \FF^{(a,b]}_{(a,\infty)}$, $\lim_{x\uparrow b}g(x)=0$. It follows that for any $f\in \FF_{(a,\infty)}$, $\lim_{x\uparrow b}f(x)=0$. However, this contradicts to the fact that $C_c^\infty((a,\infty))\subset H^1_0((a,\infty))\subset \FF_{(a,\infty)}$.
\item[(3)] $b\in \Lambda_{pl}$. There are two cases.
\begin{itemize}
\item[(3i)] For some (equivalently, all) $x\in (a,b)$, $\mathbf{P}_x(\sigma_b<\infty)>0$. By Lemma~\ref{LM25}~(4), we also have $\mathbf{P}_b(\sigma_x<\infty)>0$ for any $x\in (a,b)$. Furthermore, $\mathbf{P}_x(X_t\leq b,\forall t)=1$ for any $x\in (a,b]$ by Lemma~\ref{LM25}~(2, 3).
\item[(3ii)] For some (equivalently, all) $x\in (a,b)$, $\mathbf{P}_x(\sigma_b<\infty)=0$.
\end{itemize}
\end{itemize}
We can also classify another endpoint $a$ as above. When $b$ (or $a$) is in the case (3i), we add $b$ (or $a$) to $(a,b)$ and attain a new interval $\langle a,b\rangle$. Clearly, $\langle a,b\rangle$ is an invariant set of $X$ in the sense that
\[
	\mathbf{P}_x(X_t\in \langle a,b\rangle, \forall t)=1,\ \forall x\in \langle a, b\rangle.
\]
Moreover, $X^{\langle a,b\rangle}$ is an irreducible diffusion process with no killing inside on $\langle a,b\rangle$ in the sense that
\[
	\mathbf{P}_x(\sigma_y<\infty)=0, \quad x,y\in \langle a,b \rangle.
\]
Then $X^{\langle a,b\rangle}$ is characterized by a scale function $\tt$ and the speed measure $m|_{\langle a, b\rangle}$. Note that $b\in \langle a,b\rangle $ if and only if $\mathbf{P}_x(\sigma_b<\infty)>0$. In other words, $b$ is approachable in finite time. From \cite[(3.5.13)]{CF12}, we can deduce that $b\in \langle a, b\rangle$ if and only if $\tt(b)<\infty$. Similarly we have $a\in \langle a,b \rangle$ if and only if $\tt(a)>-\infty$. On the other hand, the part process of $X^{\langle a,b\rangle}$ on $(a,b)$ is a minimal diffusion with the scale function $\tt$. Clearly, $(\frac{1}{2}\mathbf{D}_{(a,b)}, H^1_0((a,b)))$ is its regular Dirichlet subspace. By using \cite[Theorem~4.1]{FHY10} again, we have
\[
	dx\ll d\tt, \quad \frac{dx}{d\tt}=0 \text{ or }1,\quad d\tt\text{-a.e.}
\]
Therefore, after adjusting the value of $\tt$ up to a constant, we can conclude that $\tt\in \TT^0_\infty(\langle a, b\rangle)$. The associated Dirichlet form of $X^{\langle a,b\rangle}$ is expressed as \eqref{EQ2FNU} by \cite[Theorem~3.1]{FHY10}.

When we treat any interval $(a_n, b_n)$ in \eqref{EQ2GNA}, we obtain an invariant set $I_n:=\langle a_n, b_n\rangle$ of $X$ and $X^{\langle a_n,b_n\rangle}$ is an irreducible diffusion process on $I_n$ with a unique scale function $\tt_n\in \TT^0_\infty(\langle a_n, b_n\rangle)$ and the speed measure $m|_{\langle a_n,b_n \rangle}$. Finally any two intervals are disjoint. In fact suppose two intervals $\langle a_n, b_n\rangle, \langle a_m, b_m\rangle$ ($b_n\leq a_m$) have common point.  Then $a_m=b_n\in \langle a_n, b_n\rangle\cap  \langle a_m, b_m\rangle$. However $b_n\in \langle a_n, b_n\rangle$ implies $b_n\in \Lambda_{pl}$ and $a_m\in \langle a_m, b_m\rangle$ implies $a_m\in \Lambda_{pr}$, which contradicts the fact that $\Lambda_{pr}\cap \Lambda_{pl}=\emptyset$.

Note that any $x\in F\setminus \{a_n,b_n\}$ is a trap by Lemma~\ref{LM29}.  This implies $T_tf(x)=f(x)$ for any $f\in L^2(\mathbb{R})$ and $m$-a.e. $x\in F\setminus \{a_n,b_n\}$. Thus from Theorem~\ref{LM28} and Corollary~\ref{COR210}, we can deduce that $(\EE,\FF)$ is expressed as \eqref{EQ2FFL}.

Finally, we assert $m(F)=0$. Then the proof of the necessity of Theorem~\ref{THM23} is complete.

\begin{lemma}
$m(F)=0$.
\end{lemma}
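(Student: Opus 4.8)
The plan is to reduce the claim to an energy comparison. Since $F=G^c$ and $\bigcup_n I_n$ differ only by the countable endpoint set $\{a_n,b_n:n\ge1\}$, one has $m(F)=m(\tilde F)$ with $\tilde F:=\left(\bigcup_n I_n\right)^c$, so it suffices to prove $m(\tilde F)=0$. The idea is to test the already-established decomposition \eqref{EQ2FFL} against smooth functions and to use the extension relation \eqref{EQ1FFE} to pin down the energy.

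First I would fix an arbitrary $f\in C_c^\infty(\mathbb{R})$. On one hand $f\in H^1(\mathbb{R})\subset\FF$, and since $(\EE,\FF)$ is a Dirichlet extension of $(\frac12\mathbf{D},H^1(\mathbb{R}))$ we get $\EE(f,f)=\frac12\mathbf{D}(f,f)=\frac12\int_{\mathbb{R}}(f')^2\,dx$. On the other hand, \eqref{EQ2FFL} gives $\EE(f,f)=\sum_{n\ge1}\EE^n(f|_{I_n},f|_{I_n})$. The crucial step is the identity
\[
\EE^n(f|_{I_n},f|_{I_n})=\tfrac12\int_{I_n}(f')^2\,dx,\qquad f\in C_c^\infty(\mathbb{R}).
\]
To prove it, write $\ss_n:=\tt_n^{-1}$; by the correspondence in \eqref{EQ1TRT} between $\TT$ and $\SS$, $\ss_n$ is absolutely continuous with $\ss_n'=0$ or $1$ a.e. Then $g:=f\circ\ss_n$ is absolutely continuous, $f|_{I_n}=g\circ\tt_n$, hence $f\ll\tt_n$ with $\frac{df}{d\tt_n}=g'\circ\tt_n$ and $g'(y)=f'(\ss_n(y))\ss_n'(y)$ a.e. Changing variables $y=\tt_n(x)$, using $(\ss_n')^2=\ss_n'$, and applying the substitution formula for the monotone absolutely continuous map $\ss_n$ (which sends $\tt_n(I_n)$ onto $I_n$) yields the identity. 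Summing over $n$ gives $\EE(f,f)=\frac12\int_{\bigcup_nI_n}(f')^2\,dx$, and comparing with the previous value of $\EE(f,f)$ forces
\[
\int_{\tilde F}(f')^2\,dx=0,\qquad \forall f\in C_c^\infty(\mathbb{R}).
\]

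Finally I would deduce $m(\tilde F)=0$ from the last display. If $m(\tilde F)>0$, choose a bounded interval $[c,d]$ with $m(\tilde F\cap[c,d])>0$ and pick $f\in C_c^\infty(\mathbb{R})$ with $f(x)=x$ on $[c,d]$; then $\int_{\tilde F}(f')^2\,dx\ge m(\tilde F\cap[c,d])>0$, a contradiction. Hence $m(\tilde F)=0$ and $m(F)=0$. The main obstacle is the per-interval energy identity: everything hinges on $\ss_n'\in\{0,1\}$, which collapses $(\ss_n')^2$ to $\ss_n'$ so that the $\EE^n$-energy of a smooth function reduces, after the change of variables, to the plain Dirichlet integral over $I_n$; the measure statement is then immediate.
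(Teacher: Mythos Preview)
Your proposal is correct and follows essentially the same approach as the paper: test the decomposition \eqref{EQ2FFL} against $f\in C_c^\infty(\mathbb{R})$, use the extension relation to identify $\EE(f,f)$ with $\tfrac12\mathbf{D}(f,f)$, and collapse each $\EE^n(f|_{I_n},f|_{I_n})$ to $\tfrac12\int_{I_n}(f')^2\,dx$ via the $\{0,1\}$-valued derivative, forcing $\int_F(f')^2\,dx=0$ and hence $m(F)=0$. The only cosmetic differences are that the paper computes the per-interval identity directly via $(dx/d\tt_n)^2=dx/d\tt_n$ rather than passing to the inverse $\ss_n=\tt_n^{-1}$, and it works with $F$ itself rather than reducing first to $\tilde F=(\bigcup_n I_n)^c$.
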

\begin{proof}
Note that for an absolutely continuous function $f\in \FF^n$,
\begin{equation}\label{EQ2ENF}
	\EE^n(f,f)=\frac{1}{2}\int_{a_n}^{b_n}\left(\frac{df}{d\tt_n}\right)^2d\tt_n= \frac{1}{2}\int_{a_n}^{b_n}\left(\frac{df}{dx}\right)^2\left(\frac{dx}{d\tt_n}\right)^2d\tt_n=\frac{1}{2}\int_{a_n}^{b_n}\left(\frac{df}{dx}\right)^2dx.
\end{equation}
Since $C_c^\infty(\mathbb{R})\subset H^1(\mathbb{R})\subset \FF$, we have for any $f\in C_c^\infty(\mathbb{R})$,
\[
	\EE(f,f)=\frac{1}{2}\sum_{n\geq 1} \int_{a_n}^{b_n} \left(\frac{df}{dx}\right)^2dx=\frac{1}{2}\int_G\left(\frac{df}{dx}\right)^2dx.
\]
On the other hand,
\[
	\EE(f,f)=\frac{1}{2}\mathbf{D}(f,f)=\frac{1}{2}\int_\mathbb{R}\left(\frac{df}{dx}\right)^2dx.
	\]
It follows that
\[	
\int_F\left(\frac{df}{dx}\right)^2dx=0,\quad \forall f\in C_c^\infty(\mathbb{R}).
\]
This implies $m(F)=0$.
\end{proof}

\subsubsection{Proof of sufficiency}

In this part, we shall prove the sufficiency of Theorem~\ref{THM23}. Note that $(\EE,\FF)$ given by \eqref{EQ2FFL}, with invariant intervals $\{I_n:n\ge 1\}$ and scale function $\tt_n\in\TT^0_\infty(I_n)$, is a Dirichlet form on $L^2(\mathbb{R})$ by Theorem~\ref{LM28}. For convenience, an endpoint of $I_n$ which is included in $I_n$ is called a closed endpoint, and otherwise an open endpoint.
For any function $f\in C_c^\infty(\mathbb{R})$, it follows from $\tt_n\in \TT^0_\infty(I_n)$ and \eqref{EQ2ENF} that $f|_{I_n}\ll \tt_n$ and
\[
\EE^n(f|_{I_n}, f|_{I_n})=\frac{1}{2}\int_{I_n} f'(x)^2dx.
\]
Thus from $m(F)=0$, we can deduce that
\[
	\EE(f,f)=\frac{1}{2}\sum_{n\geq 1}\int_{I_n} f'(x)^2dx =\frac{1}{2}\int_\mathbb{R}f'(x)^2dx =\frac{1}{2}\mathbf{D}(f,f).
\]
This implies
\[
H^1(\mathbb{R})\subset \FF, \quad \EE(f,f)=\frac{1}{2}\mathbf{D}(f,f),\quad f\in H^1(\mathbb{R}).
\]
Finally, we need only to prove the Dirichlet form $(\EE,\FF)$ given by \eqref{EQ2FFL} is regular on $L^2(\mathbb{R})$.

\begin{lemma}\label{LM212}
The Dirichlet form $(\EE,\FF)$ given by \eqref{EQ2FFL} is regular on $L^2(\mathbb{R})$.
\end{lemma}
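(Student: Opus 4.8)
The plan is to verify the two defining properties of regularity for $(\EE,\FF)$ on $L^2(\mathbb{R})$: that $\FF\cap C_c(\mathbb{R})$ is uniformly dense in $C_c(\mathbb{R})$, and that it is $\EE_1$-dense in $\FF$. The first is immediate from the computation just completed, which shows $H^1(\mathbb{R})\subset\FF$; hence $C_c^\infty(\mathbb{R})\subset\FF\cap C_c(\mathbb{R})$, and $C_c^\infty(\mathbb{R})$ is already uniformly dense in $C_c(\mathbb{R})$. All the work lies in the $\EE_1$-density, and the decisive tool throughout is the defining property of $\TT^0_\infty(I_n)$: at every open endpoint the scale function $\tt_n$ tends to $\pm\infty$, which—exactly as in the Example of \S\ref{SEC1}—provides unlimited scale room on which a function may be cut off or interpolated at arbitrarily small Dirichlet energy.

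Fix $f\in\FF$ and $\epsilon>0$. First I would reduce to a convenient $f$. Replacing $f$ by $(-M)\vee f\wedge M$ and letting $M\uparrow\infty$, I may assume $f$ is bounded. Next, since \eqref{EQ2FFL} gives $\EE(f,f)=\sum_n\EE^n(f|_{I_n},f|_{I_n})$ and $\|f\|_{L^2}^2=\sum_n\|f|_{I_n}\|^2_{L^2(I_n)}$, the truncations $f\cdot 1_{\bigcup_{k\le N}I_k}$ lie in $\FF$ and converge to $f$ in $\EE_1$; hence I may assume $f$ is supported on finitely many intervals $I_1,\dots,I_N$. On each such $I_k$ the form $(\EE^k,\FF^k)$ of \eqref{EQ2FNU} is that of the irreducible diffusion on $I_k$ with scale $\tt_k$ and speed $m|_{I_k}$, whose open endpoints are inaccessible since $\tt_k\in\TT^0_\infty(I_k)$; writing $f|_{I_k}=\phi_k\circ\tt_k$ with $\int(\phi_k')^2<\infty$, I replace $\phi_k$ by a function that is constant for $\abs{s}$ large. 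Because $\int_{\abs{s}>M}(\phi_k')^2\to 0$ while the Lebesgue neighbourhoods of the open endpoints on which the modification occurs shrink to those endpoints, the resulting $g_k\in\FF^k$ is continuous on $I_k$, is constant near each open endpoint, and approximates $f|_{I_k}$ to within $\epsilon/N$ in the form norm.

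It remains to glue the finitely many $g_k$ into a single $g\in C_c(\mathbb{R})\cap\FF$ without spoiling these estimates. The only points at which $\sum_k g_k$, extended by $0$, can fail to be continuous are the endpoints of the $I_k$, all lying in the nowhere-dense set $F$. At an open endpoint $g_k$ is already constant; if it is not shared with a neighbour I take that constant to be $0$, so the extension is continuous. At a closed endpoint $p=b_k$ the value $g_k(p)$ is generally nonzero and, $p$ being a reflecting finite-scale boundary, cannot be altered cheaply, so I prescribe the values on the far side of $p$. If $p$ is the (necessarily open) endpoint of a neighbouring $I_j$, I set the constant value of $g_j$ near $p$ equal to $g_k(p)$, admissible because that endpoint carries infinite scale. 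If instead $p$ borders a cluster of intervals on which $f=0$, I hold $g\equiv g_k(p)$ on a short Lebesgue interval $[p,p+c]$—a stretch of zero energy, since a constant has vanishing $\tt$-derivative on every subinterval it meets—then drop $g$ from $g_k(p)$ to $0$ on a single interval $I_{m_0}\subset(p,p+c)$ possessing an open endpoint, using the infinite scale room to make the drop cost less than $\epsilon/N$, and set $g=0$ beyond. Summing over the at most $2N$ endpoints keeps the additional energy and $L^2$-error of order $\epsilon$.

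The hard part is precisely this gluing. Each local piece is routine, but the boundary $F$ may be as intricate as a Cantor set (Remark~\ref{RM24}), so no interpolation carried out in the Lebesgue variable has controlled energy, and one must manufacture a globally continuous representative while reconciling the independent boundary behaviour that $f$ may exhibit on different invariant intervals. Everything hinges on $\tt_n\in\TT^0_\infty(I_n)$ forcing $\tt_n(a_n)=-\infty$ and $\tt_n(b_n)=+\infty$ at open endpoints: this is exactly what renders both the per-interval cut-offs and the junction matchings negligible in energy, and it is why $\TT^0_\infty$ rather than $\TT$ occurs in Theorem~\ref{THM23}. Having produced $g\in\FF\cap C_c(\mathbb{R})$ with $\EE_1(f-g,f-g)=O(\epsilon)$, the $\EE_1$-density, and hence regularity, follows.
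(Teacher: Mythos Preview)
Your argument has a genuine gap at the gluing step. You assume that near any closed endpoint $p$ there is always an interval $I_{m_0}\subset(p,p+c)$ ``possessing an open endpoint'', whose infinite scale room you then exploit to drop from $g_k(p)$ to $0$ at small energy. But this need not exist: it may happen that every interval $I_n$ meeting $(p,p+\beta)$ is closed for some $\beta>0$, so every $\tt_n$ there has finite total scale. Example~\ref{EXA218} is the extreme instance---each $X^{I_n}$ is a reflected Brownian motion on a closed interval $I_n=[a_n,b_n]$ with natural scale, and the intervals tile the complement of the standard Cantor set; there are no open endpoints anywhere in $[0,1]$. On a single closed $I_{m_0}$ the minimal energy of any function going from $h$ to $0$ is $h^2/\bigl(2(\tt_{m_0}(b_{m_0})-\tt_{m_0}(a_{m_0}))\bigr)$, and you have no control forcing this to be small.

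The paper treats exactly this configuration as a separate case. When all endpoints in $(p,p+\beta)$ are closed, one constructs a Cantor-type function $\phi$ on $[p,p+\beta]$: continuous, decreasing from $1$ to $0$, and \emph{constant on each} $[a_{q_j},b_{q_j}]$ (the construction is spelled out in Remark~\ref{RM212}). Because $\phi$ is constant on every $I_{q_j}$, its $\tt_{q_j}$-derivative vanishes identically and the total $\EE$-energy of $h\cdot\phi$ is zero; only the $L^2$-term contributes, and that is bounded by $h^2\beta$. This is the missing idea: when no infinite scale is available, one manufactures the drop with zero Dirichlet energy by distributing it over the nowhere-dense set $F\cap(p,p+\beta)$ rather than inside any single invariant interval. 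Your proof is correct in the remaining case (open endpoints accumulate at $p$), and your overall reduction to finitely many intervals matches the paper's; but without this Cantor-function device the argument does not cover all configurations allowed by Theorem~\ref{THM23}.
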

\begin{proof}
Clearly, $C_c^\infty(\mathbb{R})\subset \FF\cap C_c(\mathbb{R})$. So $\FF\cap C_c(\mathbb{R})$ is dense in $C_c(\mathbb{R})$ with the uniform norm. It suffices to prove $\FF\cap C_c(\mathbb{R})$ is dense in $\FF$ with the norm $\|\cdot\|_{\EE_1}$.

We first note that $(\EE^n,\FF^n)$ is regular on $L^2(I_n)$. Set $\mathcal{C}_n:=\FF^n\cap C_c(I_n)$. Define the following class
\[
	\mathcal{C}:= \{f\in \FF: f^n\in \mathcal{C}_n\},
\]
where $f^n:=f|_{I_n}$. Then $\mathcal{C}$ is dense in $\FF$ with the norm $\|\cdot\|_{\EE_1}$. In fact, fix $f\in \FF$ and $\epsilon>0$. For each $n$, take a function $g_n\in \mathcal{C}_n$ such that $\|f^n-g_n\|^2_{\EE^n_1}<\epsilon/2^n$. Let $g$ be the function: $g|_{I_n}=g_n$, $g=0$ outside $\cup_{n\geq 1}I_n$. Clearly $g\in \FF$ and hence $g\in \mathcal{C}$. Furthermore,
\[
	\EE_1(f-g,f-g)=\sum_{n\geq 1}\EE^n_1(f^n-g_n,f^n-g_n) <\epsilon.
\]
Therefore, we need only to prove $\FF\cap C_c(\mathbb{R})$ is dense in $\mathcal{C}$ with the norm $\|\cdot\|_{\EE_1}$.

Fix a function $f\in \mathcal{C}$ and a constant $\epsilon>0$. There exists an integer $n$ large enough such that $\sum_{k>n}\EE_1^k(f^k,f^k)<\epsilon $. Let $g:=f\cdot 1_{\cup_{k=1}^n\langle a_k, b_k\rangle}$. Then $g\in \mathcal{C}\subset \FF$ and \[
	\EE_1(f-g,f-g)=\sum_{k>n}\EE_1^k(f^k,f^k)<\epsilon.
\]
We need now to find a function in $\FF\cap C_c(\mathbb{R})$ which is $\EE_1$-close enough to $g$.

Note that $g$ is continuous on $\langle a_k, b_k\rangle$. The discontinuous points of $g$ are those closed endpoints of $\{I_k: 1\leq k\leq n\}$. Particularly, the discontinuous points of $g$ are finite. Take such a discontinuous point $c$ of $g$. Without loss of generality, assume that $c$ is the right endpoint of some interval $\langle a_k, b_k\rangle$ with $b_k\in \langle a_k, b_k\rangle$.  Set $h:=g(c)$. There are two different situations.
\begin{itemize}
\item[(1)] For any $\beta>0$, there exists an open endpoint of $\{I_n:n\ge 1\}$ in $[c,c+\beta)$, i.e., $c$ is a limit point of open endpoints of $\{I_n:n\ge 1\}$.
\begin{itemize}
\item[(1a)] Let us start from a simple case where
$c$ is an open endpoint of some $I_{k'}$, which is essentially the same as the example given in introduction.

In this case $c=a_{k'}\notin \langle a_{k'},b_{k'}\rangle$ and $\tt_{k'}(c)=-\infty$. Since $g^{k'}:=g|_{( a_{k'},b_{k'}\rangle}\in C_c((a_{k'},b_{k'}\rangle)$, we can take $d\in (a_{k'}, b_{k'})$ such that $g=0$ on $(c, d]$.  We shall construct a continuous function $\varphi=\varphi_c^\epsilon$ on $[c, d]$ (set $\varphi(x)=0$ if $x\notin [c,d]$) such that
\begin{equation}\label{EQ2VCC}
\varphi(c)=h, \quad \varphi(d)=0,\quad \varphi \in \FF,\quad \EE_1(\varphi, \varphi)<\frac{\epsilon}{2n}.
\end{equation}
Obviously $g+\varphi$ will be continuous at $c$ and its $\EE_1$-distance to $g$ is small.

Take a constant $\delta>c$ such that $h^2\cdot (\delta-c)<\epsilon/4n$. Let $\tilde{\delta}:=\tt_{k'}(\delta)>-\infty$. Take another constant $\tilde{\delta}'<\tilde{\delta}-(8h^2n)/\epsilon$ and let $\delta':=\tt_{k'}^{-1}(\tilde{\delta}')$. Clearly there exists a $C^1$-function $\phi$ on $[\tilde{\delta}', \tilde{\delta}]$ such that
\[
	0\leq \phi \leq h,\quad \phi(\tilde{\delta}')=h,\quad   \phi(\tilde{\delta})=0,\quad |\phi'|\leq \frac{2h}{\tilde{\delta}-\tilde{\delta}'}.
\]
Define $$\varphi(x):=\begin{cases}h, & x\in [c, \delta']\\
\phi(\tt_{k'}(x)),& x\in [\delta', \delta]\\
0,&x>\delta. \end{cases}$$
Clearly, $\varphi$ is continuous on $[c,d]$ and $\varphi\in \FF$. Furthermore,
\[
\begin{aligned}
	\EE_1(\varphi,\varphi)&=\frac{1}{2}\int_{\delta'}^{\delta}\left( \frac{d\varphi}{d\tt_{k'}}\right)^2d\tt_{k'}+\int_c^\delta \varphi_c(x)^2dx\\
	&=\frac{1}{2}\int_{\tilde{\delta}'}^{\tilde{\delta}}\left(\phi'(x)\right)^2dx+\int_c^\delta \varphi(x)^2dx \\
	&\leq \frac{1}{2}\frac{4h^2}{\tilde{\delta}-\tilde{\delta}'} + h^2\cdot(\delta-c) \\
	&<\frac{\epsilon}{2n}.
\end{aligned}\]
Therefore, $\varphi$ satisfies \eqref{EQ2VCC}. See Figure~\ref{UC}.
\begin{figure}
\centering
\includegraphics[scale=0.55]{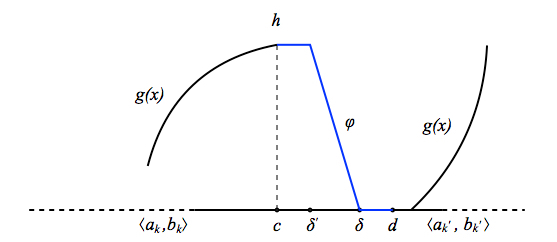}
\caption{Compensate function $\varphi$}\label{UC}
\end{figure}

\item[(1b)] $c$ is a limit point of open endpoints of $\{I_n:n\ge 1\}$.

We see that the main reason that $\varphi$ above can be constructed is that there is a non-closed interval $I_k$ close to $c$,
because in this case it follows from $g|_{I_k}\in C_c(I_k)$ that $g$ vanishes on an interval contained in $I_k$.
More precisely we can take an non-closed interval $\langle a_q, b_q\rangle $, where $b_q$ is an open endpoint, such that
\[
b_q-c<\epsilon/(4nh^2)
\]
and $g=0$ on $(c,b_q]$. Similarly to the first case, we can also construct a continuous function $\varphi=\varphi_c^\epsilon$ on $[c,b_q]$ ($\varphi:=0$ outside $[c,b_q]$) such that
\[
	\varphi(c)=h, \quad  \varphi(b_q)=0,\quad \varphi \in \FF,\quad \EE_1(\varphi, \varphi)<\frac{\epsilon}{2n}.
\]
\end{itemize}

\item[(2)] For some $\beta>0$, the endpoints of $\{I_n:n\ge1\}$ located between $c$ and $c+\beta$ are all closed.

Let $\beta$ be small enough so that $g=0$ on $(c, c+\beta]$ and $$\beta<\frac{\epsilon}{2nh^2}.$$
Denote the intervals of $\{\langle a_k,b_k\rangle: k>n\}$ in $(c, c+\beta)$ by $\{[a_{q_j},b_{q_j}]: j\geq 1\}$. Note that they are disjoint and dense in $[c,c+\beta]$. Hence $\bigcup_j [a_{q_j},b_{q_j}]$ is a Cantor-type set. We may construct a Cantor-type function $\phi$ on $[c,c+\beta]$ (for its existence, see Remark~\ref{RM212}), such that $\phi$ is decreasing and continuous, $\phi(c)=1, \phi(c+\beta)=0$ and $\phi$ is a constant on each interval $[a_{q_j}, b_{q_j}]$. Define $\varphi=\varphi_c^\epsilon(x):=h\cdot \phi(x)$ for $x\in [c, c+\beta]$ and vanishes elsewhere. Clearly, $\varphi\in \FF$ and $$\EE_1(\varphi,\varphi)=\int\varphi(x)^2dx<\epsilon/(2n).$$ Thus $\varphi$ satisfies \eqref{EQ2VCC} if $d$ is replaced by $c+\beta$.
\end{itemize}

From the above discussions, we can always construct a compensate function $\varphi_c^\epsilon$ which depends on discontinuous point $c$ of $g$ and $\epsilon$. The construction above may guarantee that for any $c\neq c'$, $\varphi_c^\epsilon$ and $\varphi_{c'}^\epsilon$ have disjoint supports.  Define
\[
	f_\epsilon:=g+\sum_{c} \varphi_c^\epsilon,
\]
where $c$ in the sum takes all possible discontinuous points of $g$. The number of the terms in this sum is less than $2n$. One may easily check that $f_\epsilon \in \FF\cap C_c(\mathbb{R})$. Therefore
\[
	\EE_1(f_\epsilon-g,f_\epsilon-g)=\EE_1\left(\sum_c\varphi_c^\epsilon, \sum_c\varphi_c^\epsilon\right)
=\sum_c\EE_1(\varphi_c^\epsilon,\varphi_c^\epsilon) \leq 2n\cdot \frac{\epsilon}{2n}=\epsilon.
\]
That completes the proof.
\end{proof}
\begin{remark}\label{RM212}
In this remark, we give a Cantor-type function $\phi$ on $[c, c+\beta]$ which is used in the proof of Lemma~\ref{LM212}, though it may be found in some textbook. Without loss of generality, assume that $[c, c+\beta]=[0,1]$, $\{I_n=[a_n,b_n]:n\geq 1\}$ are disjoint closed intervals in $(0,1)$ and $m\left([0,1]\setminus \cup_{n\geq 1}I_n \right)=0$. The continuous function $\phi$ on $[0,1]$ is desired to satisfy $\phi(0)=1, \phi(1)=0$ and $\phi$ is a constant on each $I_n$.

Rearrange the positive integers as the following way:
\[
\begin{aligned}
	&K_1:=\{k_1:=1\},  \\
&K_2:=\{k_{2,1}:=\min\{n: a_n<a_{k_1}\}, k_{2,2}:=\min\{n: a_n>a_{k_1}\}\},
\end{aligned}
\]
Assume that the sets $K_1,\cdots,K_{m-1}$ have been defined. Then $$(0,1) \setminus \bigcup\left\{I_n: n\in \bigcup_{j=1}^{m-1}K_j\right\}$$ is separated into $2^{m-1}$
disjoint and connected open intervals. Let $k_{m,i}$ be the smallest integer $n$ of $I_n$ in the $i$-th interval from left to right for $1\le i\le 2^{m-1}$.
Define inductively $K_m=\{k_{m,i}:1\le i\le 2^{m-1}\}$. Clearly
\[
	a_{k_{m,1}}<a_{k_{m,2}}<\cdots  <a_{k_{m, 2^{m-1}}},
\]
and $\mathbb{N}=\bigcup_{m\geq 1} K_m$.
Define the function $\phi$ as follows: $\phi(0):=1$, $\phi(1):=0$ and for any $m\geq 1, 1\leq j\leq 2^{m-1}$,
\[
	\phi(x):=\frac{2^{m-1}-j}{2^{m-1}},\quad \forall x\in I_{k_{m,j}}.
\]
One may prove that $\phi$ can be extended to a decreasing and continuous function on $[0,1]$ similar to the standard Cantor function on $[0,1]$.
\end{remark}

\subsection{More examples of extension}\label{SEC23}

In this section, we give several examples for the regular Dirichlet extensions of one-dimensional Brownian motion. Recall that the regular Dirichlet extension $(\EE,\FF)$ is characterized by $\{I_n:n\geq 1\}$ and $\{\tt_n\in \mathbf{T}^0_\infty(I_n):n\geq 1\}$ in Theorem~\ref{THM23}. It is evident
that the extension $(\EE,\FF)$ is same as Brownian motion if and only if only invariant interval is $\mathbb{R}$ and the scale function $\tt(x)=x$.

\begin{example}\label{EXA215}
Let $I_1=\mathbb{R}$ and $\tt_1(x)=x+c(x)$, where $c(x)$ is the standard Cantor function on $[0,1]$ and we set $c(x):=0$ for $x\leq 0$ and $c(x):=1$ for $x\geq 1$.  Then the corresponding regular Dirichlet extension is irreducible.
\end{example}

\begin{example}
Let $I_1:=(-\infty, 0)$ and $I_2:=(0,\infty)$. Take $\tt_1\in \TT^0_\infty(I_1)$ and $\tt_2\in \TT^0_\infty(I_2)$. Then $I_1$ and $I_2$ are two invariant sets of $X$. The single point set $\{0\}$ is an $m$-polar set relative to $X$. Formally, we may assume $0$ is a trap of $X$, i.e. $\mathbf{P}_0(X_t=0,\forall t)=1$.
\end{example}

\begin{example}
Let $I_1:=(-\infty,-1], I_2:=[1,\infty)$, $I_{2k+1}:=(-\frac{1}{k},-\frac{1}{k+1}]$ and $I_{2k+2}:=[\frac{1}{k+1}, \frac{1}{k} )$ for any $k \geq 1$. Take $\tt_n\in \TT_\infty^0(I_n)$ for each $n$. Then $\{0\}$ is an $m$-polar set relative to $X$ and any other single point set is not $m$-polar.
\end{example}

\begin{example}
Let $I_1:=(-\infty, 0]$, $I_2=(1,\infty)$, and $I_n:=(\frac{1}{n-1}, \frac{1}{n-2})$ for any $n\geq 3$. Take $\tt_n\in \TT_\infty^0(I_n)$ for each $n$. Then $\{\frac{1}{k}:k\geq 1\}$ is an $m$-polar set relative to $X$.
\end{example}

\begin{example}\label{EXA218}
Let $K$ be the standard Cantor set in $[0,1]$. Set $U:=K^c$ and write $U$ as a union of disjoint open intervals:
\[
U=\bigcup_{n\geq 1}(a_n,b_n),
\]
where $(a_1,b_1)=(-\infty,0), (a_2,b_2)=(1,\infty)$. Let $I_1:=(-\infty, 0], I_2:=[1,\infty)$, $I_n:=[a_{n},b_{n}]$ for any $n\geq 3$. For each $n$, let $\tt_n(x)=x$ on $I_n$. Then the associated diffusion process $X$ of this regular Dirichlet extension is a reflected Brownian motion on each interval $I_n$. Moreover,
\[
	K\setminus \left \{a_n,b_n: n\geq 1 \right\}
\]
is $m$-polar.
\end{example}

\section{{Structures of regular Dirichlet extensions: orthogonal complements and darning processes}}\label{SEC3}

In \cite{LY14}, the structures of regular Dirichlet subspaces for one-dimensional Brownian motion were investigated by using the trace method and a darning transform.
As we have seen, `trace method' could efficiently trace the different behavior of regular subspace from Brownian motion.
In this section, we shall apply the same approach to investigate the behavior of regular Dirichlet extensions of one-dimensional Brownian motion. The Dirichlet form $(\EE,\FF)$ always stands for a proper regular Dirichlet extension of $(\frac{1}{2}\mathbf{D}, H^1(\mathbb{R}))$ on $L^2(\mathbb{R})$, which is characterized by Theorem~\ref{THM23}. If not otherwise stated, $m$ denotes the Lebesgue measure on $\mathbb{R}$ in this section.

\subsection{Orthogonal complement of Brownian motion}\label{SEC31}

Let us characterize the orthogonal complement of one-dimensional Brownian motion in extension space. We need first to formulate
extended Dirichlet space.
The extended Dirichlet space of $(\frac{1}{2}\mathbf{D}, H^1(\mathbb{R}))$ is
\[
	H^1_\mathrm{e}(\mathbb{R}):=\left\{f: f \text{ is absolutely continuous and }f'\in L^2(\mathbb{R})\right\}.
\]
The extended Dirichlet space of $(\EE^n,\FF^n)$ given by \eqref{EQ2FNU} is expressed as (Cf. \cite[Theorem~2.2.11]{CF12})
\begin{equation}\label{EQ3FNE}
\FF^n_\mathrm{e}=\left\{f\text{ on }I_n: f\ll \tt_n, \int_{I_n}\left(\frac{df}{d\tt_n}\right)^2d\tt_n<\infty \right\}.
\end{equation}
We formulate the extended Dirichlet space of the regular Dirichlet extension \eqref{EQ2FFL} in the following theorem.

\begin{theorem}\label{LM31}
The extended Dirichlet space of $(\EE,\FF)$ given by \eqref{EQ2FFL} is
\begin{equation}\label{EQ3FEF}
\FF_\mathrm{e}=\left\{f: |f|<\infty\; m \text{-a.e. on}\; \mathbb{R}, f|_{I_n}\in \FF^n_\mathrm{e}, \sum_{n\geq 1}\EE^n(f|_{I_n}, f|_{I_n})<\infty  \right\}.
\end{equation}
\end{theorem}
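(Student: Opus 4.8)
The plan is to read Theorem~\ref{LM31} as a statement about the extended Dirichlet space of a direct sum of Dirichlet forms over the disjoint invariant intervals $\{I_n\}$, and to establish the two inclusions separately. Throughout I write $f^n:=f|_{I_n}$ and abbreviate $\EE^n(u):=\EE^n(u,u)$. The two structural facts I would lean on are, first, that since the $I_n$ are invariant the energy is additive, $\EE(u)=\sum_n\EE^n(u^n)$ for $u\in\FF$ (Theorem~\ref{LM28}), so in particular $\EE^n(u^n)\le\EE(u)$; and second, the standard description of the extended space: a function $f$ that is finite $m$-a.e.\ lies in $\FF_\mathrm{e}$ iff there is an $\EE$-Cauchy sequence $\{f_k\}\subset\FF$ with $f_k\to f$ $m$-a.e., with $\EE(f)=\lim_k\EE(f_k)$, and the analogous characterization holds for each $(\EE^n,\FF^n)$ with $\FF^n_\mathrm{e}$ as in \eqref{EQ3FNE}.

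For the inclusion ``$\subset$'' I would take $f\in\FF_\mathrm{e}$ with an approximating $\EE$-Cauchy sequence $f_k\in\FF$, $f_k\to f$ a.e. Restricting to a fixed $I_n$, the inequality $\EE^n((f_k-f_l)^n)\le\EE(f_k-f_l)$ shows $\{f^n_k\}$ is $\EE^n$-Cauchy, while $f^n_k\to f^n$ a.e.\ on $I_n$; hence $f^n\in\FF^n_\mathrm{e}$ and $\EE^n(f^n_k)\to\EE^n(f^n)$. Summability then follows from Fatou's lemma for the counting measure in $n$:
\[
\sum_{n}\EE^n(f^n)=\sum_n\lim_k\EE^n(f^n_k)\le\liminf_k\sum_n\EE^n(f^n_k)=\liminf_k\EE(f_k)<\infty,
\]
the right-hand side being finite because an $\EE$-Cauchy sequence is $\EE$-bounded.

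For the reverse inclusion I would build a global approximating sequence by a diagonal construction. Fix $f$ with $f^n\in\FF^n_\mathrm{e}$ and $S:=\sum_n\EE^n(f^n)<\infty$. For each $n$ choose $g^n_k\in\FF^n$ with $g^n_k\to f^n$ a.e.\ on $I_n$ and $\EE^n(f^n-g^n_k)\to0$ (such a sequence exists by the lower-semicontinuity of $\EE^n$ under a.e.\ limits). Fixing a sequence $\eta_j\downarrow0$, I would select for each $j$ and each $n\le j$ an index $m(n,j)\ge j$, increasing in $j$, so that $\EE^n(f^n-g^n_{m(n,j)})\le\eta_j2^{-n}$, and then set $F_j|_{I_n}=g^n_{m(n,j)}$ for $n\le j$ and $F_j|_{I_n}=0$ for $n>j$. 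Each $F_j$ is nonzero on only finitely many intervals, so $F_j\in\FF$; since $m(n,j)\to\infty$ for fixed $n$, a.e.\ convergence on each $I_n$ passes to this subsequence and $F_j\to f$ a.e.\ on $\mathbb{R}$. Finally, for $j\le j'$ the difference splits as
\[
\EE(F_j-F_{j'})=\sum_{n\le j}\EE^n\bigl(g^n_{m(n,j)}-g^n_{m(n,j')}\bigr)+\sum_{j<n\le j'}\EE^n\bigl(g^n_{m(n,j')}\bigr),
\]
and the triangle inequality for the seminorms $\EE^n(\cdot)^{1/2}$, together with the bounds $\EE^n(f^n-g^n_{m(n,j)})\le\eta_j2^{-n}$ and the vanishing of the tail $\sum_{n>j}\EE^n(f^n)\to0$, forces both sums to $0$ as $j\to\infty$; thus $\{F_j\}$ is $\EE$-Cauchy and $f\in\FF_\mathrm{e}$.

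The first inclusion is the routine half. The delicate point is the diagonal bookkeeping in the second, where I must simultaneously arrange $m$-a.e.\ convergence and control of the two energy sums; in particular the ``tail'' sum $\sum_{j<n\le j'}\EE^n(g^n_{m(n,j')})$, which I expect to be the main obstacle and which is handled by comparing $g^n_{m(n,j')}$ with $f^n$ and invoking $S<\infty$. Everything else reduces to the additivity of the energy over the invariant intervals and standard facts about extended Dirichlet spaces.
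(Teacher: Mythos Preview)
Your proposal is correct and follows essentially the same approach as the paper's proof: both directions use the additivity $\EE(u)=\sum_n\EE^n(u^n)$, the ``$\subset$'' direction is handled identically via restriction and Fatou's lemma, and the ``$\supset$'' direction is a diagonal construction truncating to finitely many intervals with geometric control $\eta_j 2^{-n}$ on each piece. The only cosmetic differences are that the paper separates the truncation level $N_k$ from the diagonal index (you take $N_j=j$), and the paper verifies the Cauchy property by bounding $\EE(h_k-f)$ directly rather than splitting into the two sums; your parenthetical ``by lower-semicontinuity'' is slightly imprecise (the real reason $\EE^n(f^n-g^n_k)\to0$ is the defining $\EE^n$-Cauchy property of the approximating sequence), but it does not affect the argument.
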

\begin{proof}
Take an arbitrary $f\in \FF_\mathrm{e}$. Clearly, $|f|<\infty$ $m$-a.e. on $\mathbb{R}$. By the definition of extended Dirichlet space (Cf. \cite[Definition~1.1.4]{CF12}), there exists an $\EE$-Cauchy sequence $\{f_l\}\subset \FF$ such that $\lim_{l\rightarrow \infty}f_l=f$ $m$-a.e. on $\mathbb{R}$. Particularly, for each $n$, $\{f_l|_{I_n}\}\subset \FF^n$ is $\EE^n$-Cauchy and $\lim_{l\rightarrow \infty}f_l|_{I_n}=f|_{I_n}$ $m|_{I_n}$-a.e. on $I_n$. This implies $f|_{I_n}\in \FF^n_\mathrm{e}$ and
\[
	\EE^n(f|_{I_n},f|_{I_n})=\lim_{l\rightarrow \infty} \EE^n(f_l|_{I_n},f_l|_{I_n}).
\]
On the other hand, since $\{f_l\}$ is $\EE$-Cauchy, we may take an integer $M$ large enough such that for any $l>M$,
\[
	\EE(f_M-f_l, f_M-f_l)<1.
\]
Then we have
\[
\begin{aligned}
\sum_{n\geq 1}\EE^n(f|_{I_n},f|_{I_n})&=\sum_{n\geq 1}\lim_{l\rightarrow \infty}\EE^n(f_l|_{I_n},f_l|_{I_n}) \\
&\leq \liminf_{l\rightarrow \infty} \sum_{n\geq 1} \EE^n(f_l|_{I_n}, f_l|_{I_n}) \\
&=\liminf_{l\rightarrow \infty} \EE(f_l,f_l) \\
&\leq \liminf_{l\rightarrow \infty} 2\left( \EE(f_l-f_M,f_l-f_M)+\EE(f_M,f_M) \right) \\
&\leq 2\left(1+\EE(f_M,f_M)\right) \\
&<\infty.
\end{aligned}
\]
This indicates $f$ is in the right side of \eqref{EQ3FEF}.

 On the contrary, let $f$ be a function in the right side of \eqref{EQ3FEF}. Since $f|_{I_n}\in \FF^n_\mathrm{e}$, we may take an $\EE^n$-Cauchy sequence $\{g^n_l: l\geq 1\}\subset \FF^n$ such that $g^n_l\rightarrow f|_{I_n}$ $m$-a.e. as $l\rightarrow \infty$. Particularly,
\[
	\lim_{l\rightarrow \infty}\EE^n(g^n_l-f|_{I_n}, g^n_l-f|_{I_n})=0.
\]
Thus for each positive integer $k$, there are two integers $l_k^n$ and $N_k$ such that
\[
\begin{aligned}
	&\EE^n(g^n_{l^n_k}-f|_{I_n}, g^n_{l^n_k}-f|_{I_n})<\frac{1}{k}\cdot \frac{1}{2^n}, \\
	&\sum_{n>N_k}\EE^n(f|_{I_n},f|_{I_n})<\frac{1}{k}.
\end{aligned}\]
Without loss of generality, we may assume $l^n_k, N_k\uparrow \infty$ as $k\rightarrow \infty$.
Define a function $h_k$ $m$-a.e. on $\mathbb{R}$: $h_k|_{I_n}:= g^n_{l^n_k}$ for any $1\leq n\leq N_k$ and $h_k:=0$ elsewhere. Clearly, $h_k\in L^2(\mathbb{R})$ and $h_k$ converges to $f$ $m$-a.e. as $k\rightarrow \infty$. Note that $g^n_{l^n_k}\in \FF^n$ and
\[
\begin{aligned}
	\sum_{n= 1}^{N_k}\EE^n(g^n_{l^n_k}, g^n_{l^n_k})& \leq 2\sum_{n= 1}^{N_k} \left(\EE^n(g^n_{l^n_k}-f|_{I_n}, g^n_{l^n_k}-f|_{I_n})+\EE^n(f|_{I_n},f|_{I_n}) \right)  \\
	&< 2\sum_{n\geq 1} \frac{1}{k}\cdot \frac{1}{2^n} + 2\sum_{n\geq 1}\EE^n(f|_{I_n},f|_{I_n})  \\	
	&<\infty.
\end{aligned}
\]
This implies $h_k\in \FF$.  Finally, we show that $\{h_k: k\geq 1\}$ is $\EE$-Cauchy in $\FF$. In fact, for any $\epsilon>0$, take an integer $K$ satisfying $8/K<\epsilon$. Then for any $k, k'>K$, we have
\[
\begin{aligned}
\EE(h_k-h_{k'},h_k-h_{k'})&\leq 2\left(\EE(h_k-f,h_k-f)+\EE(h_{k'}-f,h_{k'}-f) \right) \\
&\leq 2\left(\sum_{n=1}^{N_k}\EE^n(g^n_{l^n_k}-f|_{I_n},g^n_{l^n_k}-f|_{I_n}) +\sum_{n>N_k}\EE^n(f|_{I_n},f|_{I_n}) \right) \\
&\qquad +2\left(\sum_{n=1}^{N_{k'}}\EE^n(g^n_{l^n_{k'}}-f|_{I_n},g^n_{l^n_{k'}}-f|_{I_n}) +\sum_{n>N_{k'}}\EE^n(f|_{I_n},f|_{I_n}) \right) \\
&\leq 4\sum_{n\geq 1}\frac{1}{K}\cdot \frac{1}{2^n}+\frac{4}{K}\\
&<\epsilon.
\end{aligned}\]
That completes the proof.
\end{proof}

The purpose of the next part is to formulate the orthogonal complement of $H^1_\mathrm{e}(\mathbb{R})$ in $\FF_\mathrm{e}$ in $(\EE,\FF)$.
For each $n\geq 1$, denote
\begin{equation}\label{EQ3UNX}
\begin{aligned}
	&U_n:=\left\{x\in I_n: \frac{dx}{d\tt_n}(x)=1\right\}, \\
	&W_n:=I_n\setminus U_n.
\end{aligned}\end{equation}
Then $U_n, W_n$ are defined in the sense of $d\tt_n$-a.e. Since the natural scale is strictly increasing and continuous, it follows that $U_n$ is measurable dense in $I_n$ in the sense that
\[
	d\tt_n(U_n\cap (c,d))>0,\quad \forall (c,d)\subset I_n.
\]
Particularly,
\[
	m|_{I_n}=1_{U_n}d\tt_n,\quad d\tt_n(U_n)=m(U_n)=|b_n-a_n|, \quad  m(W_n)=0.
\]
 Define
\begin{equation}\label{EQ3GFFE}
\mathcal{G}:=\{f\in \FF_\mathrm{e}: \EE(f,g)=0, \forall g\in H^1_\mathrm{e}(\mathbb{R})\}.
\end{equation}
We write $\FF_\mathrm{e}=H^1_\mathrm{e}(\mathbb{R})\oplus \mathcal{G}$ or $\mathcal{G}=\FF_\mathrm{e}\ominus H^1_\mathrm{e}(\mathbb{R})$.

\begin{theorem}\label{THM32}
The orthogonal complement $\mathcal{G}$ of $H^1_\mathrm{e}(\mathbb{R})$ in $\FF_\mathrm{e}$ is expressed as
\begin{equation}\label{EQ3GFF}
	\mathcal{G}=\left\{f\in \FF_\mathrm{e}: \frac{df|_{I_n}}{d\tt_n}=0, d\tt_n\text{-a.e. on }U_n\text{ for any }n\geq 1\right\}.
\end{equation}
Furthermore, any $f\in \FF_\mathrm{e}$ can be decomposed into
\begin{equation}\label{EQ3UUU}
	f=f_1+f_2,\quad f_1\in H^1_\mathrm{e}(\mathbb{R}), f_2\in \mathcal{G}.
\end{equation}
This decomposition is unique up to a constant. In other words, $H^1_\mathrm{e}(\mathbb{R})\cap \mathcal{G}$ only contains the constant functions.
\end{theorem}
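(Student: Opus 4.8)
The plan is to convert the orthogonality relation defining $\mathcal{G}$ into an ordinary pairing in $L^2(\mathbb{R})$, and then to read off both the description of $\mathcal{G}$ and the decomposition from that pairing. The computational heart is the identification of $\frac{dg}{d\tt_n}$ for $g\in H^1_\mathrm{e}(\mathbb{R})$. Since such $g$ is absolutely continuous (with respect to Lebesgue measure) and $m|_{I_n}=1_{U_n}d\tt_n$, one checks that $g|_{I_n}\ll\tt_n$ with $\frac{dg}{d\tt_n}=g'\,1_{U_n}$ ($d\tt_n$-a.e.); in particular this derivative vanishes on $W_n$ and equals $g'$ on $U_n$. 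Granting this, for every $f\in\FF_\mathrm{e}$ I compute
\[
\EE(f,g)=\sum_{n\geq 1}\frac{1}{2}\int_{I_n}\frac{df}{d\tt_n}\frac{dg}{d\tt_n}\,d\tt_n=\frac{1}{2}\int_{\mathbb{R}}\psi_f\,g'\,dx,\qquad \psi_f:=\sum_{n\geq 1}1_{U_n}\frac{df}{d\tt_n},
\]
using that $d\tt_n=dx$ on $U_n$, that the $U_n$ are disjoint, and that $\bigcup_n U_n$ has full Lebesgue measure (as $m(F)=0$ and $m(W_n)=0$). A one-line estimate gives $\|\psi_f\|_{L^2(\mathbb{R})}^2=\sum_n\int_{U_n}(\tfrac{df}{d\tt_n})^2\,d\tt_n\leq 2\,\EE(f,f)<\infty$, so $\psi_f\in L^2(\mathbb{R})$.

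With this reduction, $f\in\mathcal{G}$ iff $\int_{\mathbb{R}}\psi_f\,g'\,dx=0$ for all $g\in H^1_\mathrm{e}(\mathbb{R})$. The inclusion $\supseteq$ in \eqref{EQ3GFF} is immediate, since $\psi_f\equiv 0$ on each $U_n$ kills the integrand. For the reverse inclusion I test against $g\in C_c^\infty(\mathbb{R})\subset H^1_\mathrm{e}(\mathbb{R})$: as $g$ ranges over $C_c^\infty(\mathbb{R})$, the derivatives $g'$ range over all mean-zero functions in $C_c^\infty(\mathbb{R})$, so $\psi_f$ is $L^2$-orthogonal to every mean-zero test function. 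By the fundamental lemma of the calculus of variations $\psi_f$ is a.e. equal to a constant, and being in $L^2(\mathbb{R})$ that constant is $0$. Since $\frac{df}{d\tt_n}=\psi_f$ on $U_n$ and $d\tt_n=m$ there, this yields $\frac{df|_{I_n}}{d\tt_n}=0$ $d\tt_n$-a.e. on $U_n$ for every $n$, which is exactly \eqref{EQ3GFF}.

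For the decomposition \eqref{EQ3UUU}, given $f\in\FF_\mathrm{e}$ I define $f_1(x):=\int_0^x\psi_f(y)\,dy$. Then $f_1$ is absolutely continuous with $f_1'=\psi_f\in L^2(\mathbb{R})$, so $f_1\in H^1_\mathrm{e}(\mathbb{R})$; and since $H^1(\mathbb{R})\subset\FF$ forces $H^1_\mathrm{e}(\mathbb{R})\subset\FF_\mathrm{e}$, the function $f_2:=f-f_1$ again lies in $\FF_\mathrm{e}$. On each $U_n$ one has $\frac{df_1}{d\tt_n}=f_1'=\psi_f=\frac{df}{d\tt_n}$, hence $\frac{df_2}{d\tt_n}=0$ on $U_n$, so $f_2\in\mathcal{G}$ by the characterization just proved. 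Uniqueness up to a constant reduces to $H^1_\mathrm{e}(\mathbb{R})\cap\mathcal{G}=\{\text{constants}\}$: if $h$ lies in both, then taking $g=h$ in \eqref{EQ3GFFE} gives $\EE(h,h)=0$, while for $h\in H^1_\mathrm{e}(\mathbb{R})$ the identity above gives $\EE(h,h)=\frac{1}{2}\int_{\mathbb{R}}(h')^2\,dx$; thus $h'=0$ and $h$ is constant.

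I expect the main obstacle to be the measure-theoretic identification $\frac{dg}{d\tt_n}=g'\,1_{U_n}$ for $g$ absolutely continuous with respect to Lebesgue measure: because $\tt_n$ carries a nontrivial singular part, this is not automatic and must be justified carefully, most cleanly by passing to the inverse $\ss_n:=\tt_n^{-1}\in\SS(I_n)$ with $\ss_n'=1$ or $0$ and verifying that $g\circ\ss_n$ is absolutely continuous. Together with the bound $\psi_f\in L^2(\mathbb{R})$, this is the only non-formal point; once it is in place, the variational argument, the explicit construction of $f_1$, and the uniqueness are all routine.
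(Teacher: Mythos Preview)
Your argument is correct, and in fact more economical than the paper's. Both proofs rest on the same measure-theoretic identity $\frac{dg}{d\tt_n}=g'\,1_{U_n}$ for $g$ absolutely continuous, and both use the variational lemma that orthogonality to all derivatives of test functions forces a function to be constant. The difference is in organization. The paper localizes: it tests $f\in\mathcal{G}$ against $C_c^\infty((a_n,b_n))$ to get $\frac{df}{d\tt_n}\,1_{U_n}\equiv c_n$ on each $I_n$ separately, then builds an explicit tent function spanning two intervals to show all the $c_n$ agree, and finally uses $\EE(f,f)<\infty$ to force the common constant to vanish. For the decomposition the paper again works interval by interval, splitting into three cases according to whether $I_n$ is unbounded on one side, the other, or neither, and patches the pieces together. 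Your global function $\psi_f\in L^2(\mathbb{R})$ collapses all of this: testing against $C_c^\infty(\mathbb{R})$ yields constancy on all of $\mathbb{R}$ at once, membership in $L^2(\mathbb{R})$ kills the constant, and the single antiderivative $f_1(x)=\int_0^x\psi_f$ replaces the paper's case-by-case construction of $g_1^n$, $g_2^n$.

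For uniqueness the paper argues via irreducibility of each $(\EE^n,\FF^n)$ to get $f|_{I_n}$ constant and then uses $f\in H^1_\mathrm{e}(\mathbb{R})$ to conclude $f$ is globally constant; your route through $\EE(h,h)=\tfrac12\int(h')^2\,dx$ for $h\in H^1_\mathrm{e}(\mathbb{R})$ is shorter. The one technical point you correctly flag---that $g\circ\ss_n$ is absolutely continuous when $g$ is---follows easily from $\ss_n'\in\{0,1\}$, and is implicitly used by the paper as well (see the computation \eqref{EQ2ENF}).
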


\begin{proof}
We first prove the expression \eqref{EQ3GFF} of $\mathcal{G}$. Fix a function $f$ in the right side of \eqref{EQ3GFF} and take another function $g$ in $H^1_\mathrm{e}(\mathbb{R})$. We have
\[
	\EE(f,g)=\sum_{n\geq 1}\EE^n(f|_{I_n}, g|_{I_n})=\frac{1}{2}\sum_{n\geq 1} \int_{I_n}\frac{df|_{I_n}}{d\tt_n}(x) g'(x)1_{U_n}(x)d\tt_n(x)=0.
\]
It follows that $f\in \mathcal{G}$. On the contrary, take an arbitrary function $f\in \mathcal{G}$. Note that $I_n=\langle a_n, b_n\rangle$. Any function in $C_c^\infty((a_n,b_n))$ is treated as a function on $\mathbb{R}$ and clearly $C_c^\infty((a_n,b_n)) \subset H^1_\mathrm{e}(\mathbb{R})$. From \eqref{EQ3GFFE}, we have
\[
	\EE(f,g)=0,\quad \forall g\in C_c^\infty((a_n, b_n)).
\]
It follows that
\[
\int_{a_n}^{b_n}\left(\frac{df|_{I_n}}{d\tt_n}(x)1_{U_n}(x) \right)g'(x)dx=0,\quad \forall g\in C_c^\infty((a_n, b_n)).
\]
This implies that $df|_{I_n}/d\tt_n\cdot 1_{U_n}$ is a constant a.e. on $(a_n,b_n)$, or equivalently $df|_{I_n}/d\tt_n$ is constant $d\tt_n$-a.e. on $U_n$. Denote this constant by $c_n$. Take two integers $m,n$ so that $a_n<a_m$. Define a function $h$ on $\mathbb{R}$:
$$h(x):= (x-a_n)1_{I_n}(x)+|I_n|\cdot 1_{[b_n,a_m]}(x)+\left[|I_n|-\frac{|I_n|}{|I_m|}\cdot (x-a_m)\right]\cdot
1_{[a_m, b_m]}(x).$$ (see Figure~\ref{FIG2}). Clearly, $h\in H^1_\mathrm{e}(\mathbb{R})$. Hence we have
\[
\begin{aligned}
0&=\EE(f,h) \\
&=\frac{1}{2}\left(\int_{I_n}\frac{df|_{I_n}}{d\tt_n}(x)h'(x)1_{U_n}(x)d\tt_n(x)+ \int_{I_m}\frac{df|_{I_m}}{d\tt_m}(x)h'(x)1_{U_m}(x)d\tt_m(x) \right) \\
&= \frac{1}{2}\left(c_n\int_{I_n}h'(x)dx+c_m\int_{I_m}h'(x)dx \right) \\
&=\frac{b_n-a_n}{2}\cdot (c_n-c_m).
\end{aligned}\]
It follows that $c_n=c$ for some constant $c$ and any $n\geq 1$. On the other hand, the fact $f\in \FF_\mathrm{e}$ implies
\[
	\EE(f,f)<\infty.
\]
However,
\[
	\EE(f,f)=\sum_{n\geq 1}\EE^n(f|_{I_n},f|_{I_n})\geq \frac{c^2}{2}\sum_{n\geq 1} \int_{I_n} 1_{U_n}(x)d\tt_n(x)=\frac{c^2}{2}m(\mathbb{R}).
\]
Therefore, $c=0$ and $f$ is in the right side of \eqref{EQ3GFF}.

\begin{figure}
\includegraphics[scale=0.55]{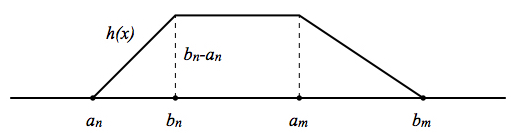}
\caption{The function $h$}\label{FIG2}
\end{figure}

Next, we prove the decomposition \eqref{EQ3UUU}. Fix a function $f\in \FF_\mathrm{e}$. We decompose $f|_{I_n}$ for any $n\geq 1$ as
\[
	f|_{I_n}=g^n_1+g^n_2,
\]
where $g^n_1\in H^1_\mathrm{e}(\mathbb{R})$ is supported on $I_n$ and $g^n_1(a_n)=0$ (resp. $g^n_1(b_n)=0$) if $a_n>-\infty$ (resp. $b_n<\infty$), $dg^n_2/d\tt_n$ is a constant $d\tt_n$-a.e. on $U_n$. In fact, let $e_n$ be a fixed point in $(a_n,b_n)$. If $(a_n,b_n)=(-\infty, \infty)$, set
\[
\begin{aligned}
	&g^n_1(x):=\int_{e_n}^x \frac{df}{d\tt_n}(x)1_{U_n}(x)d\tt_n(x), \\
	&g^n_2(x):=f|_{I_n}(x)-g^n_1(x)=\int_{e_n}^x \frac{df}{d\tt_n}(x)1_{W_n}(x)d\tt_n(x)+f(e_n)
\end{aligned}\]
for any $x\in I_n$. If $a_n$ is finite but $b_n=\infty$ (the case $a_n=-\infty$ and $b_n<\infty$ is similar), set
\[
\begin{aligned}
&g^n_1(x):= \int_{e_n}^x \frac{df}{d\tt_n}(x)1_{U_n}(x)d\tt_n(x)+C, \\
&g^n_2(x):=f|_{I_n}(x)-g^n_1(x)= \int_{e_n}^x \frac{df}{d\tt_n}(x)1_{W_n}(x)d\tt_n(x)+f(e_n)-C
\end{aligned}
\]
for any $x\in I_n$, where
\[
	C:=\int_{a_n}^{e_n} \frac{df}{d\tt_n}(x)1_{U_n}(x)d\tt_n(x).
\]
Note that
\[
|C|\leq \left(\int_{a_n}^{e_n}\left(\frac{df}{d\tt_n}(x)\right)^2d\tt_n(x)\right)^{1/2}\cdot |e_n-a_n|^{1/2}<\infty
\]	
and $g^n_1(a_n)=0$. When $I_n$ is not finite, let $C^n_1:=0$. If $a_n$ and $b_n$ are both finite, then
\[
	M:= \int_{I_n}\frac{df}{d\tt_n}(x)1_{U_n}(x)d\tt_n(x)
\]
is finite. Set $C^n_1:=M/(b_n-a_n)$ and
\[
	C^n_2:= \int_{a_n}^{e_n}\left( \frac{df}{d\tt_n}(x)-C^n_1\right)1_{U_n}(x)d\tt_n(x).
\]
Clearly, $C^n_2$ is also finite. Define
\[
\begin{aligned}
	&g^n_1(x):=\int_{e_n}^x \left( \frac{df}{d\tt_n}(y)-C^n_1\right)1_{U_n}(y)d\tt_n(y)+C^n_2, \\
	&g^n_2(x):=f|_{I_n}(x)-g^n_1(x).
\end{aligned}\]
It is easily seen that $\lim_{x\downarrow a_n}g^n_1(x)=\lim_{x\uparrow b_n}g^n_1(x)=0$. For all three cases above, we may easily deduce that $g^n_1\in H^1_\mathrm{e}(\mathbb{R})$  and $dg^n_2/d\tt_n=C^n_1$ $d\tt_n$-a.e. on $U_n$. Then we define a function $f_0$ on $\mathbb{R}$ as follows: $f_0(x):=g^n_1(x)$ for any $x\in I_n$ and $n\geq 1$ and $f_0(x):=0$ elsewhere. It follows that $f_0\in H^1_\mathrm{e}(\mathbb{R})$. Next define $h|_{I_n}:=C^n_1$ for any $n\geq 1$. Since
\[
\int_\mathbb{R}h^2(x)dx=\sum_{n\geq 1} \left(\frac{M}{b_n-a_n}\right)^2\cdot (b_n-a_n)\leq \sum_{n\geq 1}\int_{I_n}\left(\frac{df}{d\tt_n}\right)^2d\tt_n<\infty,
\]
we conclude that $h\in L^2(\mathbb{R})$ is locally integrable. Let
\[
\begin{aligned}
	&f_1(x):=f_0(x)+\int_0^xh(y)dy,\quad x\in \mathbb{R}, \\
	&f_2:=f-f_1.
\end{aligned}\]
Then we have $f_1\in H^1_\mathrm{e}(\mathbb{R})$  and thus $f_2\in \FF_\mathrm{e}$. On the other hand,
\[
	\frac{df_2|_{I_n}}{d\tt_n}=\frac{df|_{I_n}}{d\tt_n}-\frac{df_0|_{I_n}}{d\tt_n}-C_1^n=\frac{dg^n_2}{d\tt_n}-C^n_1=0,
\]
$d\tt_n$-a.e. on $U_n$. Hence $f_2\in \mathcal{G}$ by \eqref{EQ3GFF}.

Finally, assume that $f\in H^1_\mathrm{e}(\mathbb{R})\cap \mathcal{G}$. It follows from \eqref{EQ3GFFE} that
\[
	0=\EE(f,f)=\sum_{n\geq 1}\EE^n(f|_{I_n},f|_{I_n}).
\]
This implies that $\EE^n(f|_{I_n},f|_{I_n})=0$. Since $f|_{I_n}\in \FF^n_\mathrm{e}$ and $(\EE^n,\FF^n)$ is irreducible,  we conclude from \cite[Theorem~5.2.16]{CF12} that $f|_{I_n}$ is a constant on $I_n$. Then we have $f'=0$ on $\cup_{n\geq 1}I_n$ and hence $m$-a.e. on $\mathbb{R}$. Therefore, $f$ is a constant on $\mathbb{R}$. That completes the proof.
\end{proof}

\begin{remark} One may feel that the decomposition \eqref{EQ3UUU} is obvious by applying the orthogonal decomposition theorem in Hilbert space. However, though the terminology `orthogonal complement' is used here, we should notice that $(\EE,\FF_e)$ is not a Hilbert space. For $f\in \FF_e$ with $\EE(f,f)=0$, $f$ may not be necessarily a constant. Hence the decomposition \eqref{EQ3UUU} can not be deduced simply from the orthogonal decomposition of Hilbert space.
\end{remark}

\begin{example}
In this example, let us consider the regular Dirichlet extension $(\EE,\FF)$ stated in Example~\ref{EXA218}. Note that the associated diffusion process $X$ is a reflected Brownian motion on each interval $I_n$ and $U_n=I_n$. Then the extended Dirichlet space of $(\EE,\FF)$ is expressed as
\[
	\FF_\mathrm{e}=\left\{ f: f \text{ is absolutely continuous on each interval }I_n\text{ and } \sum_{n\geq 1}\int_{I_n}f'(x)^2dx<\infty\right\}.
\]
Moreover,
\[
	\mathcal{G}=\FF_\mathrm{e}\ominus H^1_\mathrm{e}(\mathbb{R})= \left\{f: f \text{ is a constant on each interval }I_n \right\}.
\]
The orthogonal complement $\mathcal{G}$ contains continuous  functions as well as discontinuous functions. For example, the Cantor-type function introduced in Remark~\ref{RM212} belongs to $\mathcal{G}$.
\end{example}

\subsection{Darning processes}\label{SEC42}

{
Recall the definitions of $U_n$ and $W_n$ in \eqref{EQ3UNX}. From now on, we impose the following assumptions on $U_n$:
\begin{description}
\item[(H1)] $U_n$ has (and is taken as) a $d\tt_n$-a.e. open version;
\item[(H2)] for any $x\in W_n\cap (a_n,b_n)$ and $\epsilon>0$, $d\tt_n\left( (x-\epsilon, x+\epsilon)\cap W_n\right)>0$.
\end{description}
The first assumption is not always right and the second one is not essential as we remarked in \cite[\S1]{LY14}. In fact, if \textbf{(H1)} is satisfied, we can always find an open $d\tt_n$-version of $U_n$ that satisfies \textbf{(H2)}, see also \cite[\S1]{LY14}.
Write
\begin{equation}\label{EQ3UNM}
U_n=\bigcup_{m\geq 1}(a_m^n, b_m^n)
\end{equation}
as a union of disjoint open intervals and set
\[
\begin{aligned}
	&U:=\bigcup_{n\geq 1}U_n= \bigcup_{n\geq 1}\bigcup_{m\geq 1}(a^n_m,b^n_m),\\
	&K:=U^c.
\end{aligned}\]

\begin{remark}\label{RM34}
We need to give some explanation for the structures of $U_n$ and $K$.  Now we only consider the right endpoint $b_n$ of $I_n$ (the case of the left endpoint $a_n$ is similar).  We first note that  $b_n\notin U_n$ if  $b_n\in I_n$, since $U_n$ is assumed to be open in $\mathbb{R}$. If $b_n\in I_n$, then it may happen that $b_n=b^n_m$ for some integer $m$ in \eqref{EQ3UNM}. For instance, in Example~\ref{EXA218}, we have $U_n=\overset{\circ}{I}_n=(a_n,b_n)$. If $b_n\notin I_n$ and $b_n<\infty$, then $W_n$ is not trivial and $d\tt_n(W_n)=\infty$. This follows from $\tt_n(b_n)=\infty$ and $d\tt_n\left(U_n\cap (e_n,b_n)\right)=m((e_n,b_n))<\infty$. Particularly, for any $\epsilon>0$,
\[
	W_n\cap (b_n-\epsilon, b_n)\neq \emptyset.
\]
In other words, it will not happen that $b_n=b_m^n$ for some integer $m$ in \eqref{EQ3UNM}. If $b_n=\infty$, then $W_n$ may be trivial as in Example~\ref{EXA218}, i.e. $W_2=\{1\}$. Also possibly as in \cite[Remark~3.2]{LY14}, $W_n$ is not trivial in the sense that for any $L>a_n$,
\[
	W_n\cap (L, \infty)\neq \emptyset.
\]
Finally we note that
\[
K=\left(\bigcup_{n\geq 1}{W_n}\right) \bigcup \left( \bigcup_{n\geq 1} I_n\right)^c,
\]
where $\left(\bigcup_{n\geq 1}I_n\right)^c$ is an $m$-polar set relative to $X$ by Theorem~\ref{THM23}. Since $m(W_n)=0$ and $m\left(\left(\cup_{n\geq 1}I_n\right)^c\right)=0$, we obtain $m(K)=0$.
\end{remark}
}

The darning method introduced in \cite[\S3.2]{LY14} may also be applied to investigate the behavior of $(\EE,\mathcal{G})$, where $\mathcal{G}$ is the orthogonal complement \eqref{EQ3GFF} of $H^1_\mathrm{e}(\mathbb{R})$ in $\FF_\mathrm{e}$. Let
\[
\begin{aligned}
	&\mathcal{G}^n:=\mathcal{G}|_{I_n}=\left\{f\in \FF^n_\mathrm{e}: \frac{df}{d\tt_n}=0, d\tt_n\text{-a.e. on }U_n \right\}, \\
	&\EE^n(f,f)=\EE^{I_n}(f,f)=\frac{1}{2}\int_{I_n} \left(\frac{df}{d\tt_n}\right)^2d\tt_n,\quad f\in \mathcal{G}^n,
\end{aligned}
\]
where $\FF^n_\mathrm{e}$ is given by \eqref{EQ3FNE}. Further denote
\[
	\mathcal{G}^n_0:=\mathcal{G}^n\cap L^2(I_n)=\left\{f\in \FF^n: \frac{df}{d\tt_n}=0, d\tt_n\text{-a.e. on }U_n \right\}.
\]
Note that $U_n$ is open and expressed as \eqref{EQ3UNM}. Thus the function $f\in \mathcal{G}^n$ is a constant on $[a_m^n, b_m^n]$ for any integer $m\geq 1$. We need to exclude the case $d\tt_n(W_n)=0$, which gives us a trivial darning process. Thus we would make the following assumption in this section:
\begin{description}
\item[(H3)] $d\tt_n(W_n)>0$.
\end{description}
Define $r_n^-:=\inf\{x: x\in W_n\}, r_n^+:=\sup\{x: x\in W_n\}$ and
\[
	J_n:=\langle r^-_n, r^+_n\rangle,
\]
where $r^-_n\in J_n$ (resp. $r^+_n\in J_n$) if and only if $a_n\in I_n$ (resp. $b_n\in I_n$). Note that if $b_n$ (resp. $a_n$) is finite, then $r^+_n=b_n$ (resp. $r^-_n=a_n$) by Remark~\ref{RM34}. If $b_n=\infty$ (resp. $a_n=-\infty$), then $r^+_n$ (resp. $r^-_n$) may be finite and meanwhile $f=0$ on $[r^+_n, \infty)$ (resp. $(-\infty, r^-_n]$) for any $f\in \mathcal{G}^n_0$. Thus $(\EE^n, \mathcal{G}^n_0)$ on $L^2(I_n)$ can be identified with the one on $L^2(J_n)$. Then we have the following lemma. The proof is similar to \cite[Lemma~3.2]{LY14} and we omit it.

\begin{lemma}
The quadratic form $(\EE^n, \mathcal{G}^n_0)$ is a Dirichlet form on $L^2(J_n)$ in the wide sense, i.e. it satisfies all conditions of Dirichlet form except for the denseness of $\mathcal{G}^n_0$ in $L^2(J_n)$.
\end{lemma}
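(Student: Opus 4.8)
The plan is to verify directly the three conditions that define a Dirichlet form in the wide sense—symmetry together with nonnegativity, the Markovian (normal contraction) property, and closedness—while deliberately setting aside the denseness of $\mathcal{G}^n_0$ in $L^2(J_n)$, which fails in general and is precisely why the qualifier ``in the wide sense'' is needed. The guiding observation is that $(\EE^n,\FF^n)$ from \eqref{EQ2FNU} is already a genuine Dirichlet form on $L^2(I_n)$, hence a complete $\EE^n_1$-inner-product space, and that $\mathcal{G}^n_0$ is carved out of $\FF^n$ by the single linear constraint $\frac{df}{d\tt_n}=0$ holding $d\tt_n$-a.e.\ on $U_n$. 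Thus most of the work reduces to showing that this constraint is stable under $\EE^n_1$-limits and that the restriction to $L^2(J_n)$ is an isometric identification.

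Symmetry and nonnegativity are immediate from $\EE^n(f,f)=\frac12\int_{I_n}(\frac{df}{d\tt_n})^2\,d\tt_n\ge 0$. For the Markovian property I would use the definition $f\ll\tt_n$, i.e.\ $f=g\circ\tt_n$ for an absolutely continuous $g$ with $\frac{df}{d\tt_n}=g'\circ\tt_n$. Given a normal contraction $\varphi$ (so $\varphi(0)=0$ and $\abs{\varphi(s)-\varphi(t)}\le\abs{s-t}$), the composition satisfies $\varphi\circ f=(\varphi\circ g)\circ\tt_n$ with $\varphi\circ g$ absolutely continuous and $\frac{d(\varphi\circ f)}{d\tt_n}=\varphi'(g)\,g'\circ\tt_n$ $d\tt_n$-a.e. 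Since $\abs{\varphi'}\le 1$, one obtains $\abs{\frac{d(\varphi\circ f)}{d\tt_n}}\le\abs{\frac{df}{d\tt_n}}$, which yields $\EE^n(\varphi\circ f,\varphi\circ f)\le\EE^n(f,f)$ and forces $\frac{d(\varphi\circ f)}{d\tt_n}=0$ wherever $\frac{df}{d\tt_n}=0$, in particular $d\tt_n$-a.e.\ on $U_n$; together with $\abs{\varphi\circ f}\le\abs{f}$ (from $\varphi(0)=0$), this keeps $\varphi\circ f$ inside $\mathcal{G}^n_0$.

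The substantive step is closedness, and this is where I would concentrate the care. Let $\{f_k\}\subset\mathcal{G}^n_0$ be $\EE^n_1$-Cauchy. Because $(\EE^n,\FF^n)$ is closed, there is $f\in\FF^n$ with $f_k\to f$ in the $\EE^n_1$-norm; in particular $\frac{df_k}{d\tt_n}\to\frac{df}{d\tt_n}$ in $L^2(I_n,d\tt_n)$. Passing to a subsequence that converges $d\tt_n$-a.e., the pointwise constraint $\frac{df_k}{d\tt_n}=0$ on $U_n$ is inherited by the limit, so $\frac{df}{d\tt_n}=0$ $d\tt_n$-a.e.\ on $U_n$ and hence $f\in\mathcal{G}^n_0$. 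This exhibits $\mathcal{G}^n_0$ as a closed subspace of the complete space $(\FF^n,\EE^n_1)$, so $(\EE^n,\mathcal{G}^n_0)$ is closed.

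Finally I would record the identification of the ambient $L^2$ space. Every $f\in\mathcal{G}^n_0$ is constant on each component of $U_n$, and when $b_n=\infty$ with $r^+_n<\infty$ (resp.\ $a_n=-\infty$ with $r^-_n>-\infty$) the half-line $(r^+_n,\infty)$ (resp.\ $(-\infty,r^-_n)$) lies in $U_n$ up to a $d\tt_n$-null set, so $f$ is constant there and, being $L^2$ over a set of infinite Lebesgue measure, that constant is $0$; thus $f$ vanishes $m$-a.e.\ on $I_n\setminus J_n$. Consequently $\int_{I_n}f^2\,dm=\int_{J_n}f^2\,dm$, the restriction $f\mapsto f|_{J_n}$ is an isometry of $(\mathcal{G}^n_0,\EE^n_1)$ into $L^2(J_n)$, and both the Markovian property and closedness transfer verbatim. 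I expect the only delicate point to be this last isometric identification—specifically verifying that functions in $\mathcal{G}^n_0$ genuinely vanish outside $J_n$—together with the passage to an a.e.-convergent subsequence in the closedness argument; everything else is routine and parallels \cite[Lemma~3.2]{LY14}.
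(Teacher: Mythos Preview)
Your proof is correct and follows exactly the approach the paper has in mind: the paper omits the proof entirely, referring to \cite[Lemma~3.2]{LY14}, and your direct verification of nonnegativity, the Markovian property, closedness (via $\mathcal{G}^n_0$ being an $\EE^n_1$-closed subspace of $\FF^n$), together with the isometric identification $L^2(I_n)\cong L^2(J_n)$ on $\mathcal{G}^n_0$, is precisely how one fills in that reference. One small notational caveat: writing $\frac{d(\varphi\circ f)}{d\tt_n}=\varphi'(g)\,g'\circ\tt_n$ presumes differentiability of $\varphi$; it is cleaner to invoke the standard fact that for Lipschitz $\varphi$ and absolutely continuous $g$ one has $(\varphi\circ g)'=0$ a.e.\ on $\{g'=0\}$ and $\abs{(\varphi\circ g)'}\le\abs{g'}$ a.e., which is all you actually use.
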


As stated in \cite[\S3.2]{LY14}, $(J_n, m|_{J_n}, \mathcal{G}^n_0, \EE^n)$ is a D-space that named by Fukushima in \cite{F71}. We introduced the darning method to find the regular representations of the D-spaces we explored in \cite{LY14}. In what follows, we shall describe the road map to attain the regular representation of $(J_n, m|_{J_n}, \mathcal{G}^n_0, \EE^n)$ via the darning method, but omit most details of the proof, since it is indeed similar to \cite{LY14}.

Recall that $e_n$ is a fixed point in $(a_n,b_n)$. We introduce the following transform on $J_n$ that collapses each open component $(a^n_m,b^n_m)$ with its endpoints of $U_n$ into a new point:
\[
	j_n(x):=\int_{e_n}^x 1_{W_n}(y)d\tt_n(y),\quad x\in J_n.
\]
If $a_n\in I_n$ (resp. $b_n\in I_n$), then $r^-_n=a_n$ and $r^{-*}_n:=j_n(r^-_n)>-\infty$ (resp. $r^+_n=b_n$ and $r^{+*}_n:=j_n(r^+_n)<\infty$). If $a_n\notin I_n$ and $a_n>-\infty$ (resp. $b_n\notin I_n$ and $b_n<\infty$), then $r^-_n=a_n$ and $r^{-*}_n=-\infty$ (resp. $r^+_n=b_n$ and $r^{+*}_n=\infty$). If $a_n=-\infty$ (resp. $b_n=\infty$), then $r^{-*}_n$ (resp. $r^{+*}_n$) may be finite or infinite. Denote
\[
	J^*_n:= \langle r^{-*}_n, r^{+*}_n\rangle,
\]
where $r^{-*}_n\in J^*_n$ (resp. $r^{+*}_n\in J^*_n$) if and only if $a_n\in I_n$ (resp. $b_n\in I_n$). Clearly, $j_n(J_n)=J^*_n$, $j_n$ is non-decreasing, and $j_n(x)=j_n(y)$ if and only if $x,y\in [a_m^n,b_m^n]$ for some integer $m$. The assumption \textbf{(H3)} guarantees that $J^*_n$ is a nontrivial interval.

We further introduce the image measure of $m|_{J_n}$ relative to $j_n$ on $J^*_n$:
\[
	m^*_n:= m|_{J_n}\circ j_n^{-1}.
\]
Note that $m^*_n$ is a Radon measure on $J^*_n$. Moreover, when $r^{-*}_n\in J^*_n$ (resp. $r^{+*}_n\in J^*_n$), it probably holds $m^*_n(\{r^{-*}_n\})>0$ (resp. $m^*_n(\{r^{+*}_n\})>0$). This situation only happens when $a_n$ (resp. $b_n$) is the left (resp. right) endpoint of $(a^n_m, b^n_m)$ for some integer $m$.

Since $f\in \mathcal{G}^n_0$ is a constant on $[a^n_m, b^n_m]$ for any $m\geq 1$, this function determines a unique function $\hat{f}$ on $J^*_n$ through a darning method:
\[
	\hat{f}\circ j_n=f.
\]
For any function $f\in \mathcal{G}^n_0\subset \FF^n$, it may be written as $f=g\circ \tt_n$ for some absolutely continuous function $g$ with $\int_{\tt_n(I_n)} g'(x)^2dx<\infty$. Particularly, $g$ is a constant on $[\tt_n(a^n_m), \tt_n(b^n_m)]$. Then $g$ determines a unique function $\hat{g}$ on $J^*_n$ via $\hat{g}\circ j'_n=g$, where
\[
	j'_n: \tt_n(J_n)\rightarrow J^*_n,\quad  x\mapsto \int_{\tt_n(e_n)}^{x} 1_{\tt_n(W_n)}(y)dy.
\]
Clearly, $\hat{f}=\hat{g}$. Hence
\[ \EE^n(f,f)=\frac{1}{2}\int_{I_n}\left(\frac{df}{d\tt_n}\right)^2d\tt_n=\frac{1}{2}\int_{\tt_n(I_n)}g'(x)^2dx=\frac{1}{2}\int_{J^*_n}\hat{g}'(x)^2dx=\frac{1}{2}\int_{J^*_n}\hat{f}'(x)^2dx.
\]
On the other hand, when $r^{-*}_n\notin J^*_n$ but $r^{-*}_n>-\infty$ (resp. $r^{+*}_n\notin J^*_n$ but $r^{+*}_n<-\infty$), $\EE^n(f,f)<\infty$ implies $\hat{f}(r^{-*}_n):=\lim_{x\downarrow r^{-*}_n}\hat{f}(x)$ (resp. $\hat{f}(r^{+*}_n):=\lim_{x\uparrow r^{+*}_n}\hat{f}(x)$) exists. We assert that $\hat{f}(r^{-*}_n)=0$ (resp. $\hat{f}(r^{+*}_n)=0$). We only treat the left endpoint $r^{-*}_n$. Note that $r^{-*}_n\notin J^*_n$ and $r^{-*}_n>-\infty$ indicate $a_n=-\infty$. If $r^-_n>-\infty$, we pointed out $f=0$ on $(-\infty, r^-_n]$ and thus $\hat{f}(r^{-*}_n)=0$. If $r^-_n=-\infty$, if follows that $f(-\infty):=\lim_{x\downarrow -\infty}f(x)$ exists, whereas $f\in L^2(I_n)$. Hence it holds $f(-\infty)=0$, which implies $\hat{f}(r^{-*}_n)=0$. Therefore, we are lead to define the quadratic form on $L^2(J^*_n, m^*_n)$:
\begin{equation}\label{EQ3GNF}
\begin{aligned}
	&\mathcal{G}^{n*}_0:=\{\hat{f}: f\in \mathcal{G}^n_0\}, \\
	&\EE^{n*}(\hat{f},\hat{g}):=\frac{1}{2}\int_{J^*_n}\hat{f}'(x)\hat{g}'(x)dx,\quad \hat{f},\hat{g}\in \mathcal{G}^{n*}_0.
\end{aligned}
\end{equation}
The following theorem is an analogue of \cite[Theorem~3.2]{LY14}.

\begin{theorem}\label{THM313}
The quadratic form $(\EE^{n*}, \mathcal{G}^{n*}_0)$ defined by \eqref{EQ3GNF} can be expressed as
\[
\begin{aligned}
	&\mathcal{G}^{n*}_0=H^1_{0,\mathrm{e}}(J^*_n)\cap L^2(J^*_n, m^*_n),\\
	&\EE^{n*}(\hat{f},\hat{g}):=\frac{1}{2}\int_{J^*_n}\hat{f}'(x)\hat{g}'(x)dx,\quad \hat{f},\hat{g}\in \mathcal{G}^{n*}_0,
\end{aligned}
\]
where
\[
H^1_{0,\mathrm{e}}(J^*_n)=\left\{\hat{f}\in H^1_\mathrm{e}(J^*_n): \hat{f}(r^{\pm *}_n)=0\text{ whenever }r^{\pm *}_n\notin J^*_n\text{ and }|r^{\pm *}_n|<\infty\right\}.
\]
Furthermore, a regular representation of D-space $(J_n, m|_{J_n}, \mathcal{G}^n_0, \EE^n)$ can be realized by the regular local Dirichlet form $(\EE^{n*}, \mathcal{G}^{n*}_0)$ on $L^2(J^*_n, m^*_n)$. Its associated diffusion process $X^{n*}$ is a Brownian motion $B^*$ on $J^*_n$ being time changed by its positive continuous additive functional with the Revuz measure $m^*_n$, where $B^*$ reflects at the finite endpoints $r^{\pm *}_n\in J^*_n$ and absorbs at the finite endpoints $r^{\pm *}_n\notin J^*_n$.
\end{theorem}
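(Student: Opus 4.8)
The plan is to follow the route of \cite[Theorem~3.2]{LY14}, splitting the argument into three stages: identifying the space $\mathcal{G}^{n*}_0$, verifying that $(\EE^{n*},\mathcal{G}^{n*}_0)$ is a \emph{regular} Dirichlet form on $L^2(J^*_n,m^*_n)$ and hence a regular representation of the D-space in the sense of \cite{F71}, and finally reading off the associated process through time-change theory.

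First I would establish the asserted expression for $\mathcal{G}^{n*}_0$, for which most of the work has already been done in the computations preceding the theorem. For $f\in \mathcal{G}^n_0$ the darning relation $\hat{f}\circ j_n=f$ together with the energy identity $\EE^n(f,f)=\frac{1}{2}\int_{J^*_n}\hat{f}'(x)^2dx$ shows $\hat{f}\in H^1_\mathrm{e}(J^*_n)$, while the boundary analysis already carried out gives $\hat{f}(r^{\pm *}_n)=0$ whenever $r^{\pm *}_n\notin J^*_n$ and $|r^{\pm *}_n|<\infty$, that is $\hat{f}\in H^1_{0,\mathrm{e}}(J^*_n)$. Membership $\hat{f}\in L^2(J^*_n,m^*_n)$ is the change-of-variables identity
\[
\int_{J^*_n}\hat{f}^2\,dm^*_n=\int_{J_n}(\hat{f}\circ j_n)^2\,dm=\int_{J_n}f^2\,dm<\infty,
\]
valid since $m^*_n=m|_{J_n}\circ j_n^{-1}$. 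For the reverse inclusion I would, given $\hat{f}\in H^1_{0,\mathrm{e}}(J^*_n)\cap L^2(J^*_n,m^*_n)$, set $f:=\hat{f}\circ j_n$; then $f$ is automatically constant on each $[a^n_m,b^n_m]$, the same change of variables returns $f\in L^2(I_n)$, and the energy identity read backwards yields $f\in \FF^n$ with $df/d\tt_n=0$ $d\tt_n$-a.e. on $U_n$, so that $f\in \mathcal{G}^n_0$ and $\hat{f}\in \mathcal{G}^{n*}_0$.

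Second, I would verify that $(\EE^{n*},\mathcal{G}^{n*}_0)$ is a regular Dirichlet form on $L^2(J^*_n,m^*_n)$. The Markovian property is immediate, since a normal contraction does not increase the Dirichlet integral, and closedness together with the isometry of the darning map $f\mapsto \hat{f}$ (which preserves both $\EE^n$ and the $L^2$-inner product by the identities above) transfers the D-space structure of $(J_n,m|_{J_n},\mathcal{G}^n_0,\EE^n)$ to the $*$-side. The only genuinely new point is regularity, namely the $\EE^{n*}_1$-density of $\mathcal{G}^{n*}_0\cap C_c(J^*_n)$ in $\mathcal{G}^{n*}_0$ and its uniform density in $C_c(J^*_n)$. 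Since $J^*_n$ is an interval and $m^*_n$ is a fully supported Radon measure, this follows from the one-dimensional approximation arguments of \cite{LY14}, the only care being the possible atoms of $m^*_n$ at reflecting endpoints $r^{\pm *}_n\in J^*_n$ and the Dirichlet condition at the absorbing endpoints. I expect \emph{this} step, establishing regularity and thereby that $(\EE^{n*},\mathcal{G}^{n*}_0)$ is a regular representation of the D-space, to be the main obstacle; the remaining stages reduce to bookkeeping and to the invocation of standard machinery.

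Finally, I would identify the process. The quadratic form $\frac{1}{2}\int_{J^*_n}\hat{f}'^2\,dx$ on $H^1_{0,\mathrm{e}}(J^*_n)$ is exactly the extended Dirichlet form of a Brownian motion $B^*$ on $J^*_n$, reflected at the finite endpoints belonging to $J^*_n$ and absorbed at the finite endpoints not in $J^*_n$ (the latter encoded by the boundary condition defining $H^1_{0,\mathrm{e}}(J^*_n)$). Since $m^*_n$ is a Radon smooth measure of full quasi-support for $B^*$, the time-change theory of \cite[Chapter~5]{CF12} shows that retaining this energy form while replacing the reference measure by $m^*_n$ produces precisely the Dirichlet form of $B^*$ time-changed by the positive continuous additive functional with Revuz measure $m^*_n$: the extended Dirichlet space is unchanged and the $L^2$-space becomes $L^2(J^*_n,m^*_n)$, which by the first step coincides with $(\EE^{n*},\mathcal{G}^{n*}_0)$. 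This identifies $X^{n*}$ as the claimed time-changed Brownian motion and completes the proof.
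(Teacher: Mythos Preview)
Your proposal is correct and follows exactly the route the paper indicates: the paper explicitly says it will ``omit most details of the proof, since it is indeed similar to \cite{LY14}'' and then carries out precisely the preparatory computations (the darning map, the energy identity, the boundary analysis) that you cite in your first stage, before stating the theorem as an analogue of \cite[Theorem~3.2]{LY14}. Your three-stage breakdown (identify $\mathcal{G}^{n*}_0$, check regularity, invoke time-change) is the same strategy, and your identification of regularity as the only substantive step matches the paper's decision to defer exactly that part to \cite{LY14}.
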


At the finite endpoints $r^{\pm *}_n\in J^*_n$, $X^{n*}$ is said to be slowly reflecting if $m^*_n(\{r^{\pm *}_n\})>0$ and instantaneously reflecting if $m^*_n(\{r^{\pm *}_n\})=0$ by \cite[Chapter~VII~(3.11)]{RY99}. The former case occurs if and only if $a_n$ (resp. $b_n$) is finite and $a_n$ (resp. $b_n$) is the left (resp. right) endpoint of $(a^n_m,b^n_m)$ for some integer $m$. At this time, $X^{n*}$ is also called a diffusion with sojourn in \cite{F14}.

{We end this section with two examples of darning processes.

\begin{example}
We first consider the regular Dirichlet extension of one-dimensional Brownian motion in Example~\ref{EXA215}. Note that it is irreducible and thus only has one invariant interval $I_1=\mathbb{R}$. Hereafter, we get rid of the subscript `$1$' for convenience and write $I=\mathbb{R}$. Moreover, $U=K^c, W=K$, where $K$ is the standard Cantor set in $[0,1]$.  Clearly, $U$ and $W$ satisfy \textbf{(H1)}, \textbf{(H2)} and \textbf{(H3)}. Recall that $\tt(x)=x+c(x)$, where $c$ is the standard Cantor function and
\[
\begin{aligned}
&\mathcal{G}_0=\{f\in \FF: \frac{df}{d\tt}=0, d\tt\text{-a.e. on }U\},\\
&\EE(f,f)=\frac{1}{2}\int_{\mathbb{R}}\left(\frac{df}{d\tt}\right)^2d\tt,\quad f\in \mathcal{G}_0,
\end{aligned}
\]
where $\FF=\{f\in L^2(\mathbb{R}): f\ll \tt, \EE(f,f)<\infty\}$. Clearly, for any $f\in \mathcal{G}_0$, $f=0$ on $(-\infty, 0]$ and $[1,\infty)$.

Since $I$ is open, we have
\[
	J=(0, 1).
\]
Take the fixed point $e=0$, and the darning transform $j$ is
\[
	j(x)=\int_0^x1_K(y)d\tt(y)=\int_0^x1_K(y)dc(y)=c(x),\quad x\in (0,1).
\]
Then $J^*=j(J)=(0,1)$ and $m^*=m|_{(0,1)}\circ j^{-1}$ is a fully supported Radon measure on $J^*$ with $m^*(J^*)=m(J)=1$. Note that the single point set of $J^*$ may be of positive $m^*$-measure. For example, $m^*(\{1/2\})=m([1/3, 2/3])=1/3$.  Furthermore,
\[
\begin{aligned}
&\mathcal{G}^*_0=H^1_{0,\mathrm{e}}((0,1))\cap L^2((0,1), m^*),\\
&\EE^*(f,g)=\frac{1}{2}\int_0^1 f'(x)g'(x)dx,\quad f,g\in \mathcal{G}^*_0,
\end{aligned}
\]
where $H^1_{0,\mathrm{e}}((0,1))=\{f\in H^1_\mathrm{e}((0,1)): f(0)=f(1)=0  \}$.
The associated darning process is a time-changed absorbing Brownian motion by $m^*$ on $(0,1)$.
\end{example}

\begin{example}
In this example, we show a darning process with sojourn at the boundary. Let $(\EE,\FF)$ be a regular Dirichlet extension of one-dimensional Brownian motion:
\[
	I_1=(-\infty, -1),\quad I_2=[-1, \infty)
\]
and $\tt_2(x)=x+c(x)$, where $c(x)$ is still the standard Cantor function with $c(x):=0$ for $x\leq 0$ and $c(x):=1$ for $x\geq 1$.

We only consider the restriction to $I_2$ of the orthogonal complement $\mathcal{G}$. Let $K$ be the standard Cantor set in $[0,1]$. Then
\[
	U_2=(-1,0) \cup \left([0,1]\setminus K \right) \cup (1,\infty),\quad
	W_2=\{-1\}\cup K.
\]
Clearly, $U_2$ and $W_2$ satisfy \textbf{(H1)}, \textbf{(H2)} and \textbf{(H3)}.
Since $-1\in I_2$, we have
\[
	J_2=[-1, 1).
\]
Take the fixed point $e_2=0$ and the darning transform is
\[
	j_2(x)=\int_0^x1_{K}(y)d\tt_2(y)=\int_0^x1_K(y)dc(y),\quad x\in [-1,1).
\]
Thus $J^*_2=j_2(J_2)=[0, 1)$ and $m_2^*=m|_{[-1,1)}\circ j_2^{-1}$ is a fully supported Radon measure on $[0,1)$. Particularly, $m_2^*(\{0\})=m([-1,0])=1$. Furthermore,
\[
\begin{aligned}
&\mathcal{G}^{2*}_0=H^1_{0,\mathrm{e}}([0,1))\cap L^2([0,1), m_2^*),\\
&\EE^{2*}(f,g)=\frac{1}{2}\int_0^1 f'(x)g'(x)dx,\quad f,g\in \mathcal{G}^{2*}_0,
\end{aligned}
\]
where $H^1_{0,\mathrm{e}}([0,1))=\{f\in H^1_\mathrm{e}([0,1)): f(1)=0  \}$. The associated darning process $X^{2*}$ is a Brownian motion $B^*$ on $[0,1)$ being time-changed by $m^*$, where $B^*$ reflects at $0$ and absorbs at $1$. Since $m_2^*(\{0\})=1>0$, $X^{2*}$ is a diffusion process with sojourn and slowly reflecting at $0$.
\end{example}
}

\section{{Structures of regular Dirichlet extensions: trace Dirichlet forms}}\label{SEC5}

In previous section we discuss the orthogonal complement $\mathcal{G}$ of one-dimensional Brownian motion in
extension space $(\EE,\FF_e)$. In this section we shall {only impose \textbf{(H1)} and \textbf{(H2)} of \S\ref{SEC42} and} discuss the orthogonal complement of the part Dirichlet form of $(\EE,\FF)$
on the open set $U$, or intuitively the biggest Brownian motion contained in $(\EE,\FF)$. The later complement is
called the trace of $(\EE,\FF)$ on $U^c$, which may be orthogonally decomposed into the former complement $\mathcal{G}$ and
the trace of Brownian motion on $U^c$.

The following lemma is similar to \cite[Lemma~2.2]{LY14},
which indicates that $X$ is a Brownian motion before leaving the open set $U$.

\begin{lemma}\label{LM34}
Let $(\frac{1}{2}\mathbf{D}_U, H^1_0(U))$ and $(\EE_U,\FF_U)$ be the part Dirichlet forms of $(\frac{1}{2}\mathbf{D}, H^1(\mathbb{R}))$  and $(\EE,\FF)$ on $U$. Then it holds that $(\EE_U,\FF_U)=(\frac{1}{2}\mathbf{D}_U, H^1_0(U))$.
\end{lemma}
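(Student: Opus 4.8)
The plan is to identify both forms as the $\EE_1$-closure of $C_c^\infty(U)$ carrying one and the same energy, and to verify the two inclusions separately. The starting observation is that on the open set $U_n$ the scale function behaves like the identity: since $dx/d\tt_n=1$ $d\tt_n$-a.e.\ on $U_n$ we have $d\tt_n=dx$ there, so on each open component $(a^n_m,b^n_m)$ of $U_n$ the map $\tt_n$ is $x\mapsto x+\mathrm{const}$. Consequently, for $\varphi\in C_c^\infty(U)$ — whose support meets only finitely many components and is compactly contained in them — one has $\varphi|_{I_n}\ll\tt_n$ with $d\varphi/d\tt_n=\varphi'$ on $U_n$ and $d\varphi/d\tt_n=0$ $d\tt_n$-a.e.\ on $W_n$, whence $\EE^n(\varphi|_{I_n},\varphi|_{I_n})=\tfrac12\int_{U_n}(\varphi')^2\,dx$. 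Summing over the finitely many relevant $n$ and using $m(W_n)=0$ together with $\varphi'=0$ off $U$ gives $\varphi\in\FF_U$ and $\EE(\varphi,\varphi)=\tfrac12\mathbf{D}(\varphi,\varphi)$; in particular the $\EE_1$-norm and the $H^1$-norm are equivalent on $C_c^\infty(U)$.

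For the inclusion $H^1_\mathrm{0}(U)\subset\FF_U$ I would argue as follows. Since $H^1_0(U)$ is by definition the $H^1$-closure of $C_c^\infty(U)$ and $\FF_U$ is $\EE_1$-complete, any $H^1$-Cauchy sequence in $C_c^\infty(U)$ is also $\EE_1$-Cauchy by the norm equivalence above, so its limit lies in $\FF_U$ and the energies pass to the limit. This yields $H^1_0(U)\subseteq\FF_U$ with $\EE_U=\tfrac12\mathbf{D}_U$ on $H^1_0(U)$.

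For the reverse inclusion $\FF_U\subset H^1_0(U)$ I would invoke that, by regularity of $(\EE,\FF)$ and \cite[Theorem~4.4.3]{FOT11}, the part form $(\EE_U,\FF_U)$ is regular on $L^2(U)$ with core $\FF\cap C_c(U)$. For $f\in\FF\cap C_c(U)$ the support is compactly contained in $U$, so $f$ vanishes on a neighbourhood of $K=U^c\supset W_n$. Writing $f=g\circ\tt_n$ with $g$ absolutely continuous, the fact that $g=0$ on $\tt_n(W_n)$ forces $g'=0$ a.e.\ there (at density points of that set), i.e.\ $df/d\tt_n=0$ $d\tt_n$-a.e.\ on $W_n$. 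The same computation as in the first paragraph then shows $f\in H^1(\mathbb{R})$, $\int_U(f')^2\,dx<\infty$ and $\EE(f,f)=\tfrac12\mathbf{D}(f,f)$; as $f$ is continuous with compact support in the open set $U$, mollification places $f$ in $H^1_0(U)$. Since $\FF\cap C_c(U)$ is $\EE_1$-dense in $\FF_U$, and $H^1_0(U)$ is complete under the equivalent $H^1$-norm, we conclude $\FF_U\subseteq H^1_0(U)$. Combining the two inclusions gives $(\EE_U,\FF_U)=(\tfrac12\mathbf{D}_U,H^1_0(U))$.

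The routine part is the energy identity $\EE=\tfrac12\mathbf{D}$ on functions living in $U$, which is just the linearity of $\tt_n$ on $U_n$. The main obstacle is the reverse inclusion: one must check that the energy of a generic part-form element is genuinely Brownian, which rests on the density-point argument showing $df/d\tt_n$ vanishes $d\tt_n$-a.e.\ on $W_n$, and on using the actual core $\FF\cap C_c(U)$ of the part form rather than the more convenient but a priori smaller class $C_c^\infty(U)$.
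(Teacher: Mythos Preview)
Your proof is correct, but it takes a more circuitous route than the paper's. The paper dispatches the inclusion $H^1_0(U)\subset\FF_U$ in one line by invoking Proposition~\ref{PRO25} (parts of subspaces remain subspaces), whereas you rebuild it from the norm identity on $C_c^\infty(U)$. For the reverse inclusion, the paper works directly with an arbitrary $f\in\FF_U$: since $\tt_n$ is the natural scale on each $(a^n_m,b^n_m)$, $f$ is absolutely continuous there; since singletons in $I_n$ are not $\EE$-polar and $f=0$ q.e.\ on $K$, one has $f(a^n_m)=f(b^n_m)=0$; and the energy bound $\EE(f,f)<\infty$ gives $\sum_{n,m}\int_{(a^n_m,b^n_m)}(f')^2\,dx<\infty$, so $f\in H^1_0(U)$ immediately. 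You instead restrict to the core $\FF\cap C_c(U)$, verify membership in $H^1_0(U)$ there, and then close up---valid, but an extra step.

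One small remark on your density-point argument: it is correct but heavier than needed. For $f\in\FF\cap C_c(U)$ the support misses an open neighbourhood $V$ of $W_n$, so $g=f\circ\tt_n^{-1}$ vanishes on the \emph{open} set $\tt_n(V\cap I_n)$, whence $g'=0$ there trivially; no appeal to Lebesgue density is required. What your approach buys is that it avoids citing the non-polarity of singletons to pin down the boundary values $f(a^n_m)=f(b^n_m)=0$, since compact support in $U$ handles that automatically. The paper's approach buys brevity and treats all of $\FF_U$ at once.
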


\begin{proof}
Note that $H^1_0(U)\subset \FF_U$ and $\EE_U(f,f)=\frac{1}{2}\mathbf{D}_U(f,f)$ for any $f\in H^1_0(U)$ by Proposition~\ref{PRO25}. Thus it suffices to prove $\FF_U\subset H^1_0(U)$. Note that $\tt_n$ is a natural scale (i.e. $\tt_n(x)=x+c$ for some constant $c$) on $(a^n_m,b^n_m)$. This implies any function $f\in \FF_U$ is absolutely continuous on $(a^n_m,b^n_m)$ and $f(a^n_m)=f(b^n_m)=0$. It follows from \eqref{EQ2FFL} that
\[
\sum_{n,m\geq 1} \int_{(a^n_m,b^n_m)} f'(x)^2dx<\infty.
\]
Then we can conclude that $f$ is absolutely continuous on $\mathbb{R}$ and hence $f\in H^1_0(U)$.
\end{proof}

We now turn to the trace Dirichlet forms of $(\EE,\FF)$ and $(\frac{1}{2}\mathbf{D},H^1(\mathbb{R}))$ on $K$. To do that, we have to find a smooth measure supported on $K$.
For each $n$, $d\tt_n$ is a Radon measure on $I_n$ but not necessarily finite. Nevertheless, we can always take a finite measure $d\tt'_n$ equivalent to $d\tt_n$ if $I_n$ is finite. For example,
\[
d\tt'_n= \sum_{k\geq 1} \frac{C_n}{2^k\cdot d\tt_n\left([a_n+1/k, b_n-1/k]\right)}\cdot d\tt_n|_{[a_n+1/k, b_n-1/k]},
\]
where $C_n$ is some positive constant and we make the convention $0/0=0$. Particularly, we may choose $C_n$ so that $d\tt_n'(I_n)=b_n-a_n$. If $I_n$ is infinite, i.e. $I_n=\langle a_n,\infty), (-\infty, b_n\rangle $ or $\mathbb{R}$, we write $d\tt'_n:=d\tt_n$. Define  a measure
\begin{equation}\label{EQ3MNT}
\mu:=\sum_{n\geq 1}d\tt'_n|_{W_n} +\sum_{n\geq 1} (b_n-a_n)\cdot \left(\delta_{a_n}\cdot 1_{\{a_n\in I_n\}}+\delta_{b_n}\cdot 1_{\{b_n\in I_n\}}\right),
\end{equation}
where $W_n=I_n\setminus U_n$ and $\delta_{a_n}$ is the Dirac measure at $a_n$.
It can be seen from the following lemma that $\mu$ might be a suitable choice.

\begin{lemma}\label{LM46}
The measure $\mu$ given by \eqref{EQ3MNT} is a Radon smooth measure with the topological support $K$ relative to $(\frac{1}{2}\mathbf{D},H^1(\mathbb{R}))$ and $(\EE,\FF)$ respectively. Hence the quasi support of $\mu$ relative to $(\frac{1}{2}\mathbf{D},H^1(\mathbb{R}))$ is $K$. Furthermore, the quasi support of $\mu$ relative to $(\EE,\FF)$ can be taken as a finely closed q.e. version $K$.
\end{lemma}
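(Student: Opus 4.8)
The plan is to establish four things in turn: that $\mu$ is Radon, that it is smooth with respect to both $(\frac{1}{2}\mathbf{D},H^1(\mathbb{R}))$ and $(\EE,\FF)$, that its topological support is $K$, and finally that its quasi supports are as claimed. Throughout I use the description $K=\bigl(\bigcup_{n\ge1}W_n\bigr)\cup\bigl(\bigcup_{n\ge1}I_n\bigr)^c$ from Remark~\ref{RM34}, together with $m(K)=0$, and I split the two summands of \eqref{EQ3MNT} into the diffuse part $\mu_d:=\sum_n d\tt'_n|_{W_n}$ and the atomic part $\mu_a$, which places a positive finite mass at each closed endpoint of each $I_n$ (for a finite interval this mass is $b_n-a_n$; only its positivity and finiteness will be used).

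First I would check that $\mu$ is Radon. Since the intervals are disjoint and $m(W_n)=0$, on any compact $[c,d]$ only finitely many atoms carry non-negligible mass and the total atomic mass over $[c,d]$ is controlled by the lengths of the intervals meeting $[c,d]$; the normalization making $d\tt'_n$ a \emph{finite} measure on each finite $I_n$ is exactly what yields $\mu_d([c,d])<\infty$, while on the at most two half-infinite intervals $d\tt'_n=d\tt_n$ is already Radon. For smoothness I would verify the two defining conditions. The measure charges no polar set: $\mu$ is concentrated on $\bigcup_n I_n$ (the diffuse part lives on $W_n\subset I_n$ and every atom sits at a closed endpoint of some $I_n$), and by Remark~\ref{RM24}(3) no one-point subset of $I_n$ is $m$-polar, hence each carries positive capacity; thus any $\EE$-polar set $N$ satisfies $N\cap I_n=\emptyset$, whence $N\subset(\bigcup_n I_n)^c$ and $\mu(N)=0$. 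For $(\frac{1}{2}\mathbf{D},H^1(\mathbb{R}))$ the claim is immediate, since the only $\frac{1}{2}\mathbf{D}$-polar set is $\emptyset$. For the nest condition I would take $F_k=[-k,k]$, which have finite $\mu$-mass by Radonness, and note that $\bigcup_k\{u\in\FF:u=0\text{ q.e. off }F_k\}$ contains $\FF\cap C_c(\mathbb{R})$, which is $\EE_1$-dense by the regularity of Lemma~\ref{LM212} (and likewise for $H^1(\mathbb{R})$); hence $\{F_k\}$ is a nest for either form and $\mu$ is smooth.

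Next I would identify the topological support. Since $\mu(\mathbb{R}\setminus K)=0$ and $K$ is closed, $\mathrm{supp}(\mu)\subset K$. For the reverse, fix $x\in K$ and an open $V\ni x$. If $x\in W_n\cap\overset{\circ}{I_n}$, assumption \textbf{(H2)} gives $d\tt_n(V\cap W_n)>0$, and equivalence of $d\tt'_n$ with $d\tt_n$ forces $\mu_d(V)>0$. If $x$ is a closed endpoint of some $I_n$, then $\mu_a(\{x\})>0$. If $x\in(\bigcup_n I_n)^c$, then, that set being nowhere dense, $x$ is a limit of endpoints of the $I_n$; at a nearby \emph{open} endpoint, say $b_n<\infty$ with $b_n\notin I_n$, Remark~\ref{RM34} shows $W_n$ accumulates at $b_n$ with $d\tt_n\bigl(W_n\cap(b_n-\epsilon,b_n)\bigr)>0$, so again $\mu(V)>0$ (and nearby closed endpoints contribute atoms). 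In every case $\mu(V)>0$, giving $\mathrm{supp}(\mu)=K$ for both forms at once.

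It remains to pass from topological to quasi support, which I expect to be the main obstacle. For $(\frac{1}{2}\mathbf{D},H^1(\mathbb{R}))$ there are no non-empty polar sets, so quasi-everywhere coincides with everywhere and the quasi support equals the topological support $K$. For $(\EE,\FF)$ the quasi support is determined only up to an $\EE$-polar set and is contained in $\mathrm{supp}(\mu)=K$; I would show it equals $K$ q.e. by discarding the leftover $(\bigcup_n I_n)^c$, which is $m$-polar hence $\EE$-exceptional by Remark~\ref{RM24}(4), and proving that every point of $\bigcup_n W_n$ lies in the quasi support. For this I would invoke the characterization of the quasi support of $\mu$ as the fine support of its associated positive continuous additive functional $A$ under the Revuz correspondence (Cf. \cite{CF12}): fixing $x\in W_n\subset I_n$, the interval $I_n$ is $X$-invariant and $X^{I_n}$ is irreducible, so the process started at $x$ enters every neighborhood of $x$ immediately; since $\mu$ charges every such neighborhood, $A$ increases immediately, i.e. $\inf\{t>0:A_t>0\}=0$ $\mathbf{P}_x$-a.s., placing $x$ in the quasi support. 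Hence the quasi support is $K$ up to the $\EE$-exceptional set $(\bigcup_n I_n)^c$, and since $K$ is closed, hence finely closed, it may be taken as the finely closed q.e. version of the quasi support. The delicate points are the rigorous use of the PCAF/Revuz description of the quasi support and the verification that the immediate-entrance property persists after restriction to the invariant interval $I_n$.
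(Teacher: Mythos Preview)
Your argument is correct in outline and largely parallels the paper's, but the final step --- identifying the quasi support of $\mu$ relative to $(\EE,\FF)$ --- is handled differently, and your version is the one place where the reasoning is not yet complete.

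The paper avoids the PCAF machinery entirely and instead invokes the test-function criterion of \cite[Theorem~3.3.5~(b)]{CF12}: a quasi-closed set $F$ is a quasi support of $\mu$ if and only if, for every $u\in\FF$, one has $\tilde u=0$ q.e.\ on $F$ precisely when $\tilde u=0$ $\mu$-a.e. Both implications are then checked by hand for $F=K$, using only that each $u|_{I_n}$ is continuous on $I_n$ together with \textbf{(H2)}. This is short and entirely elementary.

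Your PCAF route is legitimate, but the sentence ``the process started at $x$ enters every neighborhood of $x$ immediately; since $\mu$ charges every such neighborhood, $A$ increases immediately'' is not a proof: in general the fine support of a smooth measure can be strictly smaller than its topological support, so topological-support information alone does not force $\inf\{t>0:A_t>0\}=0$. What closes the gap is the observation that on each invariant interval $I_n$ the restricted process $X^{I_n}$ is an irreducible one-dimensional diffusion in which every singleton has positive capacity, so the fine topology on $I_n$ coincides with the usual topology; consequently the fine support of $\mu|_{I_n}$ equals its topological support, which by your earlier step contains $W_n$. Adding this line would make your argument rigorous. Either way you end up discarding only the $\EE$-polar set $(\bigcup_n I_n)^c$, so the conclusion matches the paper's.
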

\begin{proof}
Clearly, $\mu$ is a Radon measure on $\mathbb{R}$. Since the $m$-polar set relative to $(\frac{1}{2}\mathbf{D},H^1(\mathbb{R}))$ must be the empty set, it follows that $\mu$ is smooth relative to $(\frac{1}{2}\mathbf{D},H^1(\mathbb{R}))$. It is also smooth relative to $(\EE,\FF)$ since the $m$-polar sets relative to $(\EE,\FF)$ must be the subsets of $\left(\cup_{n\geq 1}I_n\right)^c$ and clearly $\mu\left(\left(\cup_{n\geq 1}I_n\right)^c\right)=0$.

Next, we prove the topological support of $\mu$ is $K$. Note that $K$ is closed and $\mu(K^c)=\mu(U)=0$. If $K'$ is another closed set and $\mu(K'^c)=0$, we assert that $K\subset K'$. Suppose that $x\in K\setminus K'$. Then $(x-\epsilon,x+\epsilon)\cap K'=\emptyset$ for some constant $\epsilon>0$. If $x\in W_n\cap (a_n,b_n)$ for some $n$, then it follows from \textbf{(H2)} that $\mu((x-\epsilon,x+\epsilon))\geq \mu((x-\epsilon,x+\epsilon)\cap W_n)>0$, which contradicts the fact $\mu(K'^c)=0$. Otherwise $(x-\epsilon, x]$ must contain a part with an endpoint of some interval $I_n$. When this endpoint belongs to $I_n$, clearly $\mu((x-\epsilon,x])>0$. When this endpoint does not belong to $I_n$, we have $d\tt_n(I_n\cap (x-\epsilon, x])=\infty$ whereas $d\tt_n(U_n\cap (x-\epsilon, x])\leq \epsilon$. This implies $d\tt'_n|_{W_n}((x-\epsilon,x])>0$ and thus $\mu((x-\epsilon, x+\epsilon))>0$, which also contradicts the fact $\mu(K'^c)=0$.

Since the fine topology relative to the one-dimensional Brownian motion is the same as the usual topology, we conclude that the quasi-support of $\mu$ relative to $(\frac{1}{2}\mathbf{D},H^1(\mathbb{R}))$ is also $K$. For the last assertion, we need only to prove \cite[Theorem~3.3.5~(b)]{CF12} for $F=K$. If $u\in \FF$ and $u=0$ q.e. on $K$, then $u(x)=0$ for any $x\in \cup_{n\geq 1}W_n$. This implies $u=0$ $\mu$-a.e. On the contrary, let $u\in \FF$ and $u=0$ $\mu$-a.e. We assert that $u(x)=0$ for any $x\in I_n\cap K=W_n$, which implies $u=0$ q.e. on $K$. In fact, assume $u(x)\neq 0$ for some $x\in W_n$. Since $u=0$ $\mu$-a.e., $x$ is not the endpoint of $I_n$. Note that $u|_{I_n}$ is continuous. Thus $u(y)\neq 0$ for any $y\in (x-\epsilon, x+\epsilon)$ with some constant $\epsilon>0$. However, $\mu((x-\epsilon,x+\epsilon))>0$ by \textbf{(H2)}, which contradicts the fact $u=0$ $\mu$-a.e. That completes the proof.
\end{proof}

\begin{remark}\label{RM36}
From Lemma~\ref{LM46} and \cite[Lemma~5.2.9~(iii)]{CF12}, we know that any Radon smooth measure $\mu'$, for example $\mu$ given by \eqref{EQ3MNT}, with the quasi support $K$ relative to $(\EE,\FF)$ or $(\frac{1}{2}\mathbf{D}, H^1(\mathbb{R}))$ always has the topological support $K$. The trace Dirichlet form induced by $\mu'$ is a regular Dirichlet form on $L^2(K,\mu')$ as asserted by \cite[Corollary~5.2.10]{CF12}. The choice of $\mu'$ is not essential in the sense of \cite[Theorem~5.2.15]{CF12}.
\end{remark}

Trace Dirichlet form on some appropriate set $F$ characterizes the `trace' of the associated Markov process left on $F$. Precisely, given a symmetric Markov process $Y$ with the regular Dirichlet form $(\EE^Y, \FF^Y)$ on the state space $E$ and $F\subset E$ a closed subset with the positive capacity, let $\nu$ be a Radon smooth measure on $E$ with the same topological and quasi support $F$ and $\sigma_F^Y$ be the hitting time of $F$ relative to $Y$. Set for any $f\in \FF^Y_\mathrm{e}$,
\[
	\mathbf{H}^Y_Ff(x):=\mathbf{E}_x f(Y_{\sigma^Y_F}),\quad x\in E.
\]
Then
\[
\begin{aligned}
&\check{\FF}^Y:=\left\{ \varphi\in L^2(F,\nu): \varphi=f\; \nu\text{-a.e. on }F \text{ for some } f\in \FF^Y_\mathrm{e}\right\},   \\
&\check{\EE}^Y(\varphi, \varphi):=\EE^Y(\mathbf{H}^Y_Ff, \mathbf{H}^Y_Ff), \quad \varphi\in \check{\FF}^Y, \varphi=f\; \nu\text{-a.e. on }F, f\in \FF_\mathrm{e}
\end{aligned}
\]
is called the trace Dirichlet form of $(\EE^Y,\FF^Y)$ induced by $\nu$. It is a regular Dirichlet form on $L^2(F,\nu)$ as in Remark~\ref{RM36}.
We refer the details of trace Dirichlet forms and their Feller measures to \cite{CFY06}, \cite[\S5.5]{CF12} and \cite{LY14}.

Denote the trace Dirichlet forms of $(\frac{1}{2}\mathbf{D}, H^1(\mathbb{R}))$ and $(\EE,\FF)$ induced by the measure $\mu$, given by \eqref{EQ3MNT}, by $(\frac{1}{2}\check{\mathbf{D}}, \check{H}^1)$ and $(\check{\EE},\check{\FF})$, respectively. They are both regular and recurrent (Cf. \cite[Theorem~5.2.5]{CF12}) Dirichlet forms on $L^2(K,\mu)$. The associated Hunt processes are denoted by $\check{B}=(\check{B}_t)_{t\geq 0}$ and $\check{X}=(\check{X}_t)_{t\geq 0}$. Their extended Dirichlet spaces are naturally denoted by $\check{H}^1_\mathrm{e}$ and $\check{\FF}_\mathrm{e}$. We have (Cf. \cite[Theorem~5.2.15]{CF12})
\[
\begin{aligned}
&\check{H}^1_\mathrm{e}=H^1_\mathrm{e}(\mathbb{R})|_K = \left\{f|_K: f\in H^1_\mathrm{e}(\mathbb{R}) \right\}, \\
&\check{\FF}_\mathrm{e}=\FF_\mathrm{e}|_K =\left\{f|_{K}: f\in \FF_\mathrm{e}\right\}.
\end{aligned}
\]
Recall that $U_n, W_n$ are defined by \eqref{EQ3UNX} and $U_n$ is expressed as \eqref{EQ3UNM}. We now state the main result in this section, which is similar to \cite[Theorem~2.1]{LY14} in the sense that they both give an example that a pure jump Dirichlet form is a proper regular Dirichlet subspace of a Dirichlet form with strongly local part.

\begin{theorem}\label{THM38}
Let $(\frac{1}{2}\check{\mathbf{D}}, \check{H}^1)$ and $(\check{\EE},\check{\FF})$ be given above. Then $(\frac{1}{2}\check{\mathbf{D}}, \check{H}^1)$ is a proper regular Dirichlet subspace of $(\check{\EE},\check{\FF})$, i.e.
\[
\check{H}^1\subset \check{\FF},\quad \check{\EE}(\varphi, \varphi)=\frac{1}{2}\check{\mathbf{D}}(\varphi, \varphi),\quad \varphi\in \check{H}^1.
\]
Furthermore for any $\varphi\in \check{\FF}_\mathrm{e}=\FF_\mathrm{e}|_{K}$,
\begin{equation}\label{EQ3EVV}
	\check{\EE}(\varphi, \varphi)=\frac{1}{2}\sum_{n\geq 1}\int_{W_n}\left(\frac{d\varphi}{d\tt_n}\right)^2d\tt_n+\frac{1}{2}\sum_{n\geq 1}\sum_{m\geq 1} \frac{(\varphi(a_m^n)-\varphi(b^n_m))^2}{|a_m^n-b_m^n|},
\end{equation}
and for any $\varphi\in \check{H}^1_\mathrm{e}=H^1_\mathrm{e}(\mathbb{R})|_{K}$,
\begin{equation}\label{EQ3DVV}
\frac{1}{2}\check{\mathbf{D}}(\varphi, \varphi)= \frac{1}{2}\sum_{n\geq 1}\sum_{m\geq 1} \frac{(\varphi(a_m^n)-\varphi(b^n_m))^2}{|a_m^n-b_m^n|}.
\end{equation}
\end{theorem}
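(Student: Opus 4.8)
The plan is to compute both trace forms through the hitting (harmonic) extension operator and then split the resulting energy over the ``Brownian part'' $U_n$ and the ``singular part'' $W_n$ of each $I_n$. Recall from the general theory of traces (\cite{CFY06}, \cite[\S5.5]{CF12}) that, since by Lemma~\ref{LM46} the measure $\mu$ has quasi support $K$ for both forms, one has $\check{\EE}(\varphi,\varphi)=\EE(\mathbf{H}_K f,\mathbf{H}_K f)$ whenever $\varphi=f|_K$ with $f\in\FF_\mathrm{e}$, where $\mathbf{H}_K f$ is the unique function in $\FF_\mathrm{e}$ that equals $f$ q.e. on $K$ and is $\EE$-harmonic on $U=K^c$; the analogous identity holds for $(\frac12\mathbf{D},H^1(\mathbb{R}))$ with its own harmonic extension $\mathbf{H}^B_K$. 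Since restrictions of $\FF_\mathrm{e}$-functions to $I_n$ lie in $\FF^n_\mathrm{e}$ and are therefore $\ll\tt_n$ and continuous on $I_n$, the boundary values $\varphi(a_m^n),\varphi(b_m^n)$ appearing in \eqref{EQ3EVV}--\eqref{EQ3DVV} are well defined through quasi-continuous versions.

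The key step is the explicit description of $\mathbf{H}_K f$. By the structure recorded in Remark~\ref{RM34}, the connected components of the open set $U$ are precisely the intervals $(a_m^n,b_m^n)$ of \eqref{EQ3UNM}, and each of their endpoints lies in $K$. On every such component $\tt_n$ is a natural scale, so an $\EE$-harmonic function there is affine in $x$; by Lemma~\ref{LM34} the part process on $U$ is a Brownian motion, so $\mathbf{H}^B_K f$ admits the same description. Hence both $\mathbf{H}_K f$ and $\mathbf{H}^B_K f$ coincide with the linear interpolation of $\varphi(a_m^n),\varphi(b_m^n)$ on each $(a_m^n,b_m^n)$ and with $\varphi$ q.e. on $K$. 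Substituting into $\EE(\mathbf{H}_K f,\mathbf{H}_K f)=\tfrac12\sum_n\int_{I_n}\big(\tfrac{d\mathbf{H}_K f}{d\tt_n}\big)^2\,d\tt_n$ and splitting $I_n=U_n\cup W_n$ yields \eqref{EQ3EVV}: on $U_n$ the affine pieces together with $d\tt_n=dx$ give the jump sum $\sum_m(\varphi(a_m^n)-\varphi(b_m^n))^2/|a_m^n-b_m^n|$, while on $W_n$ one has $\mathbf{H}_K f=\varphi$, contributing $\int_{W_n}\big(\tfrac{d\varphi}{d\tt_n}\big)^2\,d\tt_n$. For the Brownian trace, $\tfrac12\mathbf{D}(\mathbf{H}^B_K f,\mathbf{H}^B_K f)=\tfrac12\int_{\mathbb{R}}\big((\mathbf{H}^B_K f)'\big)^2\,dx$ only sees $U$ because $m(W_n)=0$, producing exactly \eqref{EQ3DVV}.

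With both formulas available, the subspace assertion is immediate. The inclusion $\check{H}^1\subset\check{\FF}$ follows by restricting $H^1_\mathrm{e}(\mathbb{R})\subset\FF_\mathrm{e}$ to $K$. For the energy identity, if $\varphi=f|_K$ with $f\in H^1_\mathrm{e}(\mathbb{R})$ then $f$ is absolutely continuous with respect to $dx$, so $\tfrac{df}{d\tt_n}=\tfrac{df}{dx}\cdot\tfrac{dx}{d\tt_n}=0$ $d\tt_n$-a.e. on $W_n$, where $dx/d\tt_n=0$. Thus the strongly local term in \eqref{EQ3EVV} vanishes and $\check{\EE}(\varphi,\varphi)$ reduces to the jump sum, which is precisely $\tfrac12\check{\mathbf{D}}(\varphi,\varphi)$ by \eqref{EQ3DVV}.

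It remains to show the subspace is proper, and this is where I expect the main difficulty: as Corollary~\ref{COR39} indicates, the two forms may even share the same energy and differ only in their domains, so properness cannot be read off from the formulas alone. Since $(\EE,\FF)$ is proper, the orthogonal complement $\mathcal{G}$ of Theorem~\ref{THM32} contains a nonconstant $f$; by \eqref{EQ3GFF} such $f$ has $\tfrac{df}{d\tt_n}=0$ on $U_n$, hence is constant on each gap-closure $[a_m^n,b_m^n]$, so $\varphi:=f|_K$ satisfies $\varphi(a_m^n)=\varphi(b_m^n)$ for every gap while remaining nonconstant on $K$ (if $f|_K$ were constant, the constancy on gap-closures and the membership of gap-endpoints in $K$ would force $f$ to be globally constant). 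I claim $\varphi\notin\check{H}^1_\mathrm{e}$. Indeed, were $\varphi=g|_K$ for some absolutely continuous $g\in H^1_\mathrm{e}(\mathbb{R})$, then for any $p<q$ in $K$ the decomposition of $[p,q]$ into $K\cap[p,q]$ (of Lebesgue measure zero) and the gaps $(a_m^n,b_m^n)\subset(p,q)$ would give $g(q)-g(p)=\sum(g(b_m^n)-g(a_m^n))=\sum(\varphi(b_m^n)-\varphi(a_m^n))=0$, forcing $\varphi$ to be constant on $K$, a contradiction. Hence $\varphi\in\check{\FF}\setminus\check{H}^1$, and the subspace is proper; the Cantor-type functions of Remark~\ref{RM212} furnish concrete such $\varphi$.
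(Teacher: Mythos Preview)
Your argument is correct and rests on the same two pillars as the paper's proof: Lemma~\ref{LM34} (the part process on $U$ is Brownian) and the trace-form identity $\check{\EE}(\varphi,\varphi)=\EE(\mathbf{H}_Kf,\mathbf{H}_Kf)$. The organizational difference is that the paper first decomposes $\check{X}$ along the invariant sets $W_n=I_n\cap K$ via Corollary~\ref{COR210}, then on each piece invokes the Beurling--Deny formula for traces from \cite{CFY06,CF12}: the strongly local part comes from the energy measure $\mu_{\langle f\rangle}=(df/d\tt_n)^2\,d\tt_n$ restricted to $W_n$, and the jump part is the Feller measure, which is Brownian by Lemma~\ref{LM34}. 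You instead compute $\mathbf{H}_Kf$ explicitly (linear on every gap, equal to $f$ on $K$) and substitute directly into $\EE$. This is more elementary and self-contained; the paper's route is shorter because it quotes the machinery already developed in \cite{LY14}. Your properness argument via a nonconstant $f\in\mathcal{G}$ is a nice concrete version of what the paper leaves to the reference \cite[Theorem~2.1~(1)]{LY14}; one small imprecision is that you conclude $\varphi\in\check{\FF}\setminus\check{H}^1$ without checking $\varphi\in L^2(K,\mu)$---what your argument actually gives is $\varphi\in\check{\FF}_\mathrm{e}\setminus\check{H}^1_\mathrm{e}$, which by Proposition~2.4 (or simply because equal Dirichlet forms have equal extended spaces) already yields properness.
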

\begin{proof}
The first assertion is similar to \cite[Theorem~2.1~(1)]{LY14} by Lemma~\ref{LM34}. The trace formula \eqref{EQ3DVV} of one-dimensional Brownian motion on $K$ can be formulated as in the proof of \cite[Theorem~2.1~(2)]{LY14} and we further remark that $m(K)=0$.

Now we prove the trace formula \eqref{EQ3EVV}. Note that the trace Dirichlet form $(\check{\EE},\check{\FF})$ corresponds to a time-changed Markov process $\check{X}$ of $X$. Precisely, let $(A_t)_{t\geq 0}$ be the associated positive continuous additive functional of $\mu$ relative to $X$ and $\tau_t$ be its right continuous inverse, i.e. $\tau_t:=\inf\{s>0: A_s>t\}$ for any $t\geq 0$. Then
\[
	\check{X}_t=X_{\tau_t}, \quad t\geq 0.
\]
On the other hand, a subset $F\subset K$ is $\check{\EE}$-polar if and only if $F$ is $\EE$-polar as a subset of $\mathbb{R}$ (Cf. \cite[Theorem~5.2.8]{CF12}). This implies that $(\bigcup_{n\geq 1}I_n)^c$ is an $\check{\EE}$-polar set. Since for each $n$, $I_n$ is an invariant set of $X$, it follows that $I_n\cap K=W_n$ is an invariant set of $\check{X}$ in the sense that
\[
	\mathbf{P}^{\check{X}}_x(\check{X}_t\in W_n, \forall t)=1, \quad x\in W_n,
\]
where $\mathbf{P}^{\check{X}}_x$ is the probability measure of $\check{X}$ starting from $x$. Particularly, $(\check{\EE}^{W_n}, \check{\FF}^{W_n})$ is the trace Dirichlet form of $(\EE^{I_n},\FF^{I_n})$ induced by $\mu|_{W_n}$. It suffices to prove that for any $\varphi\in \FF^{I_n}_\mathrm{e}|_{W_n}$,
\begin{equation}\label{EQ3EWN}
	 \check{\EE}^{W_n}(\varphi,\varphi)=\frac{1}{2}\int_{W_n}\left(\frac{d\varphi}{d\tt_n}\right)^2d\tt_n+\frac{1}{2}\sum_{m\geq 1}\frac{(\varphi(a_m^n)-\varphi(b^n_m))^2}{|a_m^n-b_m^n|},
\end{equation}
since the trace formula \eqref{EQ3EVV} may then be attained from Corollary~\ref{COR210}. Indeed, note that for any $f\in \FF^{I_n}_\mathrm{e}$, the energy measure (Cf. \cite[(4.3.8)]{CF12}) of $f$ is equal to
\[
	\mu_{\langle f\rangle}=\left(\frac{df}{d\tt_n}\right)^2d\tt_n,
\]
which can be formulated by an approach similar to \cite[(2.2)]{LY14}. The Feller measure corresponding to $(\check{\EE}^{W_n}, \check{\FF}^{W_n})$ is deduced by the same idea as in the proof of \cite[Theorem~2.1]{LY14} since $X^{I_n}$ is a Brownian motion before leaving $U_n$ by Lemma~\ref{LM34}. Therefore we obtain \eqref{EQ3EWN} which is similar to \cite[Theorem~2.1]{LY14}. That completes the proof.
\end{proof}

{Although the main ideas to prove the above theorem come from \cite[Theorem~2.1]{LY14}, we still need to point out the different significance of Theorem~\ref{THM38}. In \cite[Theorem~2.1]{LY14}, the state space $F$ of the trace Dirichlet forms must be of positive Lebesgue measure to guarantee that the associated regular Dirichlet subspace $(\EE^{(s)},\FF^{(s)})$ of the one-dimensional Brownian motion is a proper subspace. This fact causes that the strong local part of one of the trace Dirichlet forms in \cite[Theorem~2.1]{LY14} never disappears. When coming back to the above theorem, we find that the strongly local part may disappear in some special situation (i.e. $W_n$ is of zero $d\tt_n$-measure) and then an interesting phenomena shows up. }

\begin{corollary}\label{COR39}
Let $(\frac{1}{2}\check{\mathbf{D}}, \check{H}^1)$ and $(\check{\EE},\check{\FF})$ be given in Theorem~\ref{THM38}. Assume that $d\tt_n(W_n)=0$, in other words, $I_n$ is closed and $\tt_n$ is the natural scale function on $I_n$, for each $n\geq 1$. Then $(\frac{1}{2}\check{\mathbf{D}}, \check{H}^1)$ is a proper regular Dirichlet subspace of $(\check{\EE},\check{\FF})$ on $L^2(K,\mu)$. Furthermore, for any $\varphi\in \check{\FF}_\mathrm{e}$,
\begin{equation}\label{EQ3EVVS}
\check{\EE}(\varphi, \varphi)=\frac{1}{2}\sum_{n\geq 1}\frac{(\varphi(a_n)-\varphi(b_n))^2}{|a_n-b_n|},
\end{equation}	
and for any $\varphi\in \check{H}^1_\mathrm{e}$,
\begin{equation}\label{EQ3DVVS}
\frac{1}{2}\check{\mathbf{D}}(\varphi, \varphi)=\frac{1}{2}\sum_{n\geq 1}\frac{(\varphi(a_n)-\varphi(b_n))^2}{|a_n-b_n|}.
\end{equation}
\end{corollary}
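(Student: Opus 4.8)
The plan is to obtain Corollary~\ref{COR39} as a direct specialization of Theorem~\ref{THM38}; the only genuine work is to record what the hypothesis $d\tt_n(W_n)=0$ does to the geometry of $U_n$, $W_n$ and to the measure $\mu$. First I would note that $d\tt_n(W_n)=0$ forces $dx/d\tt_n=1$ $d\tt_n$-a.e. on $I_n$, so $\tt_n$ is a natural scale; since $\tt_n\in\TT^0_\infty(I_n)$, every finite endpoint is then compelled to belong to $I_n$, i.e.\ $I_n$ is closed. With the open version \textbf{(H1)} this yields $U_n=(a_n,b_n)$ as a single open interval and $W_n=\{a_n,b_n\}$. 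Consequently the decomposition \eqref{EQ3UNM} of $U_n$ has exactly one component, and the measure $\mu$ of \eqref{EQ3MNT} collapses to the purely atomic $\sum_{n\ge1}(b_n-a_n)(\delta_{a_n}+\delta_{b_n})$ carried by the countable endpoint set, as befits a pure-jump form.

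Granting this, the two trace formulae follow by inspection. Formula \eqref{EQ3DVVS} is immediate from \eqref{EQ3DVV} once the inner sum over $m$ is reduced to the single term $a^n_1=a_n$, $b^n_1=b_n$. For \eqref{EQ3EVVS} I would additionally observe that the strongly local term $\tfrac12\int_{W_n}(d\varphi/d\tt_n)^2\,d\tt_n$ in \eqref{EQ3EVV} vanishes, precisely because $d\tt_n(W_n)=0$; the surviving jump part then reduces to $\tfrac12\sum_{n\ge1}(\varphi(a_n)-\varphi(b_n))^2/|a_n-b_n|$. Thus both $(\check\EE,\check\FF)$ and $(\tfrac12\check{\mathbf D},\check H^1)$ are pure-jump Dirichlet forms whose energy expressions literally coincide.

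The statement that $(\tfrac12\check{\mathbf D},\check H^1)$ is a \emph{proper} regular Dirichlet subspace of $(\check\EE,\check\FF)$ is inherited from the first assertion of Theorem~\ref{THM38}, whose hypotheses impose only \textbf{(H1)} and \textbf{(H2)} and already cover the present case (the proof there uses $m(K)=0$ and nowhere needs $d\tt_n(W_n)>0$). The delicate point — and what makes the corollary worth isolating — is that, since \eqref{EQ3EVVS} and \eqref{EQ3DVVS} are the same expression, the two forms share an identical Beurling--Deny jumping kernel and have no strongly local part, so properness is invisible at the level of the energy and must reside entirely in the difference between the extended domains $\check H^1_\mathrm{e}=H^1_\mathrm{e}(\mathbb{R})|_K$ and $\check\FF_\mathrm{e}=\FF_\mathrm{e}|_K$.

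The main obstacle is therefore to exhibit this difference concretely. Since $(\EE,\FF)$ is a proper extension, Theorem~\ref{THM32} supplies a nonconstant $f\in\mathcal{G}=\FF_\mathrm{e}\ominus H^1_\mathrm{e}(\mathbb{R})$; under the present hypothesis $df/d\tt_n=f'$ on $U_n=(a_n,b_n)$, so $f$ is a constant $c_n$ on each $I_n$ with the $c_n$ not all equal. The restriction $\varphi:=f|_K\in\check\FF_\mathrm{e}$ then satisfies $\varphi(a_n)=\varphi(b_n)=c_n$, whence $\check\EE(\varphi,\varphi)=0$ by \eqref{EQ3EVVS}. If $\varphi$ belonged to $\check H^1_\mathrm{e}$, then $\tfrac12\check{\mathbf D}(\varphi,\varphi)=0$ by \eqref{EQ3DVVS}; but $(\tfrac12\check{\mathbf D},\check H^1)$ is the trace of the irreducible recurrent one-dimensional Brownian motion on the set $K$ of positive capacity, hence itself irreducible and recurrent, so a zero-energy element of its extended space must be constant $\mu$-a.e.\ — impossible here since $\mu$ charges every endpoint and the $c_n$ differ. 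Thus $\varphi\in\check\FF_\mathrm{e}\setminus\check H^1_\mathrm{e}$, which by the extended-space criterion gives $\check H^1\neq\check\FF$ and completes the proof.
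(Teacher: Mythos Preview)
Your proposal is correct and follows the same route as the paper: direct specialization of Theorem~\ref{THM38}. The paper's own proof is a single line (``trivial by Theorem~\ref{THM38}''), so your first two paragraphs are simply a more explicit unpacking of that specialization---noting that $d\tt_n(W_n)=0$ collapses \eqref{EQ3UNM} to the single interval $(a_n,b_n)$ and kills the strongly local term in \eqref{EQ3EVV}.

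Your final paragraph, exhibiting an explicit $\varphi\in\check\FF_\mathrm{e}\setminus\check H^1_\mathrm{e}$ via a nonconstant element of $\mathcal{G}$ and the irreducibility of the Brownian trace, is correct but strictly unnecessary: properness is already part of the conclusion of Theorem~\ref{THM38} and is simply inherited, as you yourself note in the third paragraph. The paper does eventually make the same structural observation---that $\check B$ is irreducible while $\check X$ is not---but packages it separately as Corollary~\ref{COR311} rather than as part of the proof of Corollary~\ref{COR39}. So what you have written is a fuller, self-contained account that anticipates the subsequent corollary; nothing is wrong, just more than the paper asks for at this point.
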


{The proof of Corollary~\ref{COR39} is trivial by Theorem~\ref{THM38}. Note that if $(\EE,\FF)$ is such a regular Dirichlet extension in this corollary, then its associated diffusion process is a reflected Brownian motion on each closed interval $I_n$. An example is given in Example~\ref{EXA218}, in which $K$ is the standard Cantor set in $[0,1]$.

The above corollary partially answers a problem in which we have been interested and studied for years. We know from Theorem~\ref{THM21} that if $(\EE^1,\FF^1)$
is a regular Dirichlet subspace of $(\EE^2,\FF^2)$, then the jumping and killing measures in their Beurling-Deny decompositions are the same.
Moreover, given a regular Dirichlet form, its killing part and `big jump' part do not play a role in producing a proper regular Dirichlet subspace
as described in \cite[\S2.2.3]{LY15} and \cite{LY15-2} respectively. For a strongly local Dirichlet form, many examples including
\cite{FFY05, FHY10, FL15, LY15-3} hint that it should always have proper regular Dirichlet subspaces. However we have not found any result to illustrate
 how the `small jump' part plays a role when concerning the regular Dirichlet subspaces. 
For the first time Corollary~\ref{COR39} gives us an example that a pure jump Dirichlet form has a proper regular Dirichlet subspace. This encourages us to keep going in this direction.

On the other hand, the jumps of a Hunt process are described by its L\'evy system denoted by $(N, H)$ in \cite{W64}, where $N(x,dy)$ is a kernel on the state space and $H$ is a positive continuous additive functional. We know that all  L\'evy systems of a symmetric Hunt process are equivalent in the sense that if $(N', H')$ is another L\'evy system, then $N(x,dy)\mu_H(dx)=N'(x,dy)\mu_{H'}(dx)$, where $\mu_{H}$ and $\mu_{H'}$ are the Revuz measures of $H$ and $H'$ respectively. Therefore, Corollary~\ref{COR39} also conduces to the following. }

\begin{corollary}
There exist two different symmetric pure jump Hunt processes that have the equivalent L\'evy systems.
\end{corollary}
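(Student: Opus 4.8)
The plan is to read the two processes directly off Corollary~\ref{COR39} and to match their jumping measures against the L\'evy-system dictionary recalled in the paragraph preceding the statement.

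First I would fix a concrete instance of Corollary~\ref{COR39}, for example the regular Dirichlet extension of Example~\ref{EXA218}, where $K$ is the standard Cantor set, each $I_n=[a_n,b_n]$ is closed and $\tt_n(x)=x$ is the natural scale, so that $d\tt_n(W_n)=0$ for every $n$. Corollary~\ref{COR39} then supplies two recurrent regular Dirichlet forms $(\frac{1}{2}\check{\mathbf{D}},\check{H}^1)$ and $(\check{\EE},\check{\FF})$ on $L^2(K,\mu)$, with associated symmetric Hunt processes $\check{B}$ and $\check{X}$. Because $d\tt_n(W_n)=0$, the strongly local terms $\frac{1}{2}\int_{W_n}(d\varphi/d\tt_n)^2\,d\tt_n$ appearing in \eqref{EQ3EVV} drop out, so by \eqref{EQ3EVVS} and \eqref{EQ3DVVS} both forms are purely of jump type; together with the recurrence recorded just before Theorem~\ref{THM38} (so that there is no killing), this identifies $\check{B}$ and $\check{X}$ as symmetric pure-jump Hunt processes.

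Next I would check that the two processes are genuinely different. By Corollary~\ref{COR39}, $(\frac{1}{2}\check{\mathbf{D}},\check{H}^1)$ is a \emph{proper} regular Dirichlet subspace of $(\check{\EE},\check{\FF})$, whence $\check{H}^1\subsetneq\check{\FF}$. Two Dirichlet forms on the same space $L^2(K,\mu)$ with distinct domains generate distinct strongly continuous contraction semigroups, hence distinct transition functions; therefore $\check{B}$ and $\check{X}$ do not agree in law and are distinct symmetric Hunt processes. Finally, from \eqref{EQ3EVVS}--\eqref{EQ3DVVS} the Beurling--Deny jumping measure of both forms is one and the same symmetric measure $J$, carried by the endpoint pairs $\{(a_n,b_n):n\ge1\}$ with mass proportional to $1/|a_n-b_n|$ on each pair; this coincidence also follows abstractly from Theorem~\ref{THM21}. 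For a symmetric Hunt process the jumping measure $J$ and any L\'evy system $(N,H)$ are linked by $J(dx\,dy)=\frac{1}{2}N(x,dy)\mu_H(dx)$, so if $(N_{\check{B}},H_{\check{B}})$ and $(N_{\check{X}},H_{\check{X}})$ denote L\'evy systems of the two processes, then $N_{\check{B}}(x,dy)\mu_{H_{\check{B}}}(dx)=2J(dx\,dy)=N_{\check{X}}(x,dy)\mu_{H_{\check{X}}}(dx)$, which is precisely the equivalence of L\'evy systems in the sense recalled before the statement.

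The step I expect to be the main obstacle is the passage through the jumping-measure/L\'evy-system dictionary: one must verify that under $d\tt_n(W_n)=0$ neither form has a strongly local part nor a killing part, so that the two processes are honestly pure jump, and that the normalization relating $J$ to $N(x,dy)\mu_H(dx)$ is the same convention for both. Once the Beurling--Deny jumping measures are seen to coincide, which is immediate from \eqref{EQ3EVVS} and \eqref{EQ3DVVS} (or from Theorem~\ref{THM21}), the equivalence of L\'evy systems is a formal consequence, and the only remaining point is the elementary remark that distinct Dirichlet spaces force distinct processes.
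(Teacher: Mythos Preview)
Your proposal is correct and follows essentially the same route as the paper, which in fact gives no separate proof of this corollary at all: it is presented as an immediate consequence of Corollary~\ref{COR39} together with the L\'evy-system dictionary recalled in the preceding paragraph. You have simply spelled out the details the paper leaves implicit---that $d\tt_n(W_n)=0$ kills the strongly local part, recurrence kills the killing part, properness of the subspace forces the processes to differ, and the common jumping measure $J$ yields equivalent L\'evy systems via $2J(dx\,dy)=N(x,dy)\mu_H(dx)$.
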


The following corollary gives us an intuitive understanding of the differences between the two regular Dirichlet forms in Corollary~\ref{COR39}.

\begin{corollary}\label{COR311}
Let $(\frac{1}{2}\check{\mathbf{D}}, \check{H}^1)$ and $(\check{\EE},\check{\FF})$ be the regular Dirichlet forms on $L^2(K,\mu)$ in Corollary~\ref{COR39}. Then the following assertions hold.
\begin{itemize}
\item[(1)] $(\frac{1}{2}\check{\mathbf{D}}, \check{H}^1)$ is irreducible and for any $x, y\in K$,
\begin{equation}\label{EQ3PBX}
	\mathbf{P}^{\check{B}}_x(\sigma_y<\infty)>0,
\end{equation}
where $\mathbf{P}^{\check{B}}_x$ is the probability measure of $\check{B}$ starting from $x$ and $\sigma_y$ is the first hitting time of $\{y\}$ relative to $\check{B}$.
\item[(2)] $(\check{\EE},\check{\FF})$ is not irreducible. For each $n$ such that $a_n$ and $b_n$ are finite, $\{a_n,b_n\}$ is an invariant set of $(\check{\EE},\check{\FF})$ and the associated Hunt process $\check{X}$ only jumps between $a_n$ and $b_n$. Furthermore, $K\setminus \{a_n, b_n:  a_n>-\infty ,b_n<\infty, n\geq 1\}$ is $\check{\EE}$-polar.
\end{itemize}
\end{corollary}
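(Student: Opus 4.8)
The plan is to treat the two trace processes separately, reading off their global behaviour from the two structurally identical pure-jump forms of Corollary~\ref{COR39}, where the decisive difference lives in the extended spaces $\check{H}^1_\mathrm{e}=H^1_\mathrm{e}(\mathbb{R})|_K$ and $\check{\FF}_\mathrm{e}=\FF_\mathrm{e}|_K$. For assertion (1) I would exploit that $\check B$ is the time change of one-dimensional Brownian motion $B$ by the PCAF with Revuz measure $\mu$; for assertion (2) I would exploit that the invariance of each $I_n$ for $X$ descends to the invariance of $I_n\cap K=W_n=\{a_n,b_n\}$ for $\check X$, exactly along the lines of the proof of Theorem~\ref{THM38}.

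For assertion (1), I would first recall that one-dimensional Brownian motion is irreducible and recurrent, so every singleton is non-polar and $\mathbf{P}_x(\sigma^B_y<\infty)=1$ for all $x,y\in\mathbb{R}$. Since $K$ is the (quasi) support of $\mu$ and $\check B_t=B_{\tau_t}$ with $\tau$ the right-continuous inverse of the PCAF $A$ associated to $\mu$, a point $y\in K$ is hit by $\check B$ precisely when it is hit by $B$; because $A$ is finite on every finite time interval ($\mu$ being Radon), the visit occurs at the finite time $A_{\sigma^B_y}$. Hence $\mathbf{P}^{\check B}_x(\sigma_y<\infty)=1>0$ for all $x,y\in K$, which is \eqref{EQ3PBX}. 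Full reachability then forbids any invariant set $\Lambda$ with $0<\mu(\Lambda)<\mu(K)$ (starting in $\Lambda$ the process could not reach $\Lambda^c$), so $(\tfrac{1}{2}\check{\mathbf{D}},\check{H}^1)$ is irreducible; alternatively this inheritance of irreducibility from $(\tfrac{1}{2}\mathbf{D},H^1(\mathbb{R}))$ under time change by a measure of full quasi support may be invoked directly.

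For assertion (2), by Remark~\ref{RM24}(3) each $I_n$ is an invariant set of $X$, and since $\tt_n$ is the natural scale on the closed interval $I_n$ one has $U_n=(a_n,b_n)$ and $I_n\cap K=W_n=\{a_n,b_n\}$. As in the proof of Theorem~\ref{THM38}, the representation $\check X_t=X_{\tau_t}$ forces $W_n=\{a_n,b_n\}$ to be invariant for $\check X$; when $a_n,b_n$ are both finite this is a genuine two-point set of positive mass $\mu(\{a_n\})=\mu(\{b_n\})=b_n-a_n$, so $(\check{\EE},\check{\FF})$ has a proper invariant set and is not irreducible. Restricted to $\{a_n,b_n\}$, $\check X$ is a symmetric two-state process; since the strongly local part vanishes when $d\tt_n(W_n)=0$ and there is no killing, its only motion is jumping between $a_n$ and $b_n$, consistent with the single jump term $(\varphi(a_n)-\varphi(b_n))^2/|a_n-b_n|$ of \eqref{EQ3EVVS} (equivalently, the L\'evy system of $\check X$ is carried by the pairs $\{a_n,b_n\}$). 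Finally, choosing $\mu$, as permitted by Remark~\ref{RM36}, to be the atomic measure on the finite-pair endpoints (which are dense in $K$, hence this $\mu$ still has quasi support $K$), the residual set $K\setminus\{a_n,b_n:a_n>-\infty,\,b_n<\infty\}$—the finite endpoints of the infinite intervals together with the non-endpoint points of $K$—is $\mu$-null; as $\check X$ jumps only among the finite-pair endpoints it never reaches this set, so it is $\check{\EE}$-polar.

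The main obstacle is making the two transfers through the time change fully rigorous. In (1) one must confirm that the trace clock $A$ genuinely registers the visit of $B$ to $y\in K$, so that $\sigma^{\check B}_y=A_{\sigma^B_y}<\infty$ rather than the visit being washed out by the time change; in (2) one must verify the $\check{\EE}$-polarity of the residual points of $K$, i.e. that they are simultaneously $\mu$-null and unreachable by the pure-jump dynamics. Both rest on the precise interplay between the quasi support of $\mu$, the invariant decomposition of $\check X$, and the capacity comparison between subsets of $K$ relative to $\check X$ and relative to $X$ recorded in the proof of Theorem~\ref{THM38}.
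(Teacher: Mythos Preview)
Your argument is essentially correct, but for assertion~(1) you take a genuinely different route from the paper. You argue probabilistically: since $\check B_t=B_{\tau_t}$ and one-dimensional Brownian motion hits every point, the visit to $y\in K$ survives the time change because $y$ lies in the fine support of the PCAF $A$, yielding $\sigma^{\check B}_y\le A_{\sigma^B_y}<\infty$; irreducibility then follows from universal reachability. The paper instead works on the Dirichlet-form side: it first shows irreducibility by the criterion that $\check{\mathbf{D}}(\varphi,\varphi)=0$ forces $\varphi$ constant (since $\mathbf{D}(\mathbf{H}^B_Kf,\mathbf{H}^B_Kf)=0$ forces $\mathbf{H}^B_Kf$ constant), invoking \cite[Theorem~5.2.16]{CF12}; then it deduces \eqref{EQ3PBX} from irreducibility plus recurrence plus the absence of $\tfrac{1}{2}\check{\mathbf D}$-polar sets (via \cite[Theorems~5.2.8 and~3.5.6]{CF12}). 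Your approach is more hands-on and gives the stronger conclusion $\mathbf{P}^{\check B}_x(\sigma_y<\infty)=1$ directly, at the cost of the time-change verification you flag; the paper's approach is cleaner within Dirichlet-form theory and avoids that verification entirely.

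For assertion~(2) you and the paper agree that everything descends from the invariance of $I_n$ in the proof of Theorem~\ref{THM38}. However, your $\check\EE$-polarity argument is more complicated than needed and has a small gap: replacing $\mu$ by a purely atomic measure on the finite-pair endpoints requires checking that this new measure still has quasi support $K$ \emph{relative to $(\EE,\FF)$}, not just relative to Brownian motion, and Remark~\ref{RM36} does not give you that direction. The direct route, already recorded in the proof of Theorem~\ref{THM38}, is the capacity comparison \cite[Theorem~5.2.8]{CF12}: a set $F\subset K$ is $\check\EE$-polar iff it is $\EE$-polar in $\mathbb{R}$. Since $K\setminus\{a_n,b_n:a_n>-\infty,\,b_n<\infty\}\subset\big(\bigcup_n I_n\big)^c$ and the latter is $\EE$-polar by Remark~\ref{RM24}(4), the $\check\EE$-polarity follows immediately without touching $\mu$.
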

\begin{proof}
The second assertion is obvious from the proof of Theorem~\ref{THM38}. We only prove (1). Note that $(\frac{1}{2}\check{\mathbf{D}}, \check{H}^1)$ is recurrent by \cite[Theorem~5.2.5]{CF12}. Let $\varphi=f|_K\in \check{H}^1_\mathrm{e}$ such that $\check{\mathbf{D}}(\varphi, \varphi)=0$. It follows that
\[
	\mathbf{D}(\mathbf{H}^B_Kf, \mathbf{H}^B_Kf)=0.
\]
Thus $\mathbf{H}^B_Kf\equiv C$ for some constant $C$ and $\varphi=f|_{K}=\mathbf{H}^B_Kf|_K\equiv C$. From \cite[Theorem~5.2.16]{CF12} we obtain that $(\frac{1}{2}\check{\mathbf{D}}, \check{H}^1)$ is irreducible. On the other hand, \cite[Theorem~5.2.8]{CF12} implies the $\frac{1}{2}\check{\mathbf{D}}$-polar set must be the empty set. Thus \eqref{EQ3PBX} follows from \cite[Theorem~3.5.6~(1)]{CF12}.
\end{proof}

{Corollary~\ref{COR311} shows us some interesting behavior of a Markov process associated with a Dirichlet form. Since the Feller measures in \eqref{EQ3EVVS} and \eqref{EQ3DVVS} are supported on
\[
\left\{(a_n,b_n), (b_n, a_n): a_n>-\infty, b_n<\infty, n\geq 1\right\},
\]
it seems that the Markov processes $\check{B}$ and $\check{X}$ only jump  between $a_n$ and $b_n$. Actually $\check{X}$ does jump this way. However, the trace $\check{B}$ of Brownian motion will hit any point in $K$ with positive probability. In other words, the motions of $\check{B}$ happen at where its potential energy is zero. These motions are not reflected in the energy form (i.e. Dirichlet form) but in its Dirichlet space. Recall that $\check{H}^1_\mathrm{e}$ is the restriction of $H^1_\mathrm{e}(\mathbb{R})$ to $K$ which are composed all by continuous functions, whereas $\check{\FF}_\mathrm{e}$ is the restriction of $\FF_\mathrm{e}$ to $K$ which is much bigger and contains many discontinuous functions. }



\end{document}